\newcommand{\compdia}{{\vcenter{\hbox{\scalebox{0.6}{$\Diamond$}}}}_E}
\newcommand{\compbox}{{\vcenter{\hbox{\scalebox{0.6}{$\Box$}}}}_E}
\newcommand{\compdiadelta}{{\vcenter{\hbox{\scalebox{0.6}{$\Diamond$}}}}_\Delta}
\newcommand{\compboxdelta}{{\vcenter{\hbox{\scalebox{0.6}{$\Box$}}}}_\Delta}
\newcommand{\bluebf}[1]{\textcolor{blue}{\textbf{#1}}}
\newcommand{\blue}[1]{\textcolor{blue}{#1}}
\renewcommand{\P}{\mathcal{P}}
\newcommand{\F}{\mathbb{F}}
\renewcommand{\emph}{\textbf}
\newcommand{\Prop}{\mathsf{Prop}}
\newcommand{\nomi}{\mathbf{i}}
\newcommand{\nomj}{\mathbf{j}}
\newcommand{\nomk}{\mathbf{k}}
\newcommand{\cnomm}{\mathbf{m}}
\newcommand{\cnomn}{\mathbf{n}}
\newcommand{\marginnote}[1]{\marginpar{\raggedright\tiny{#1}}}
\newcommand{\val}[1]{[\![{#1}]\!]}
\newcommand{\descr}[1]{(\![{#1}]\!)}
\renewcommand{\phi}{\varphi}
\newcolumntype{C}[1]{>{\centering\arraybackslash}p{#1}}
\newcolumntype{L}[1]{>{\arraybackslash}p{#1}}
\begin{document}
\title{Modal reduction principles: a parametric shift to graphs}
%
%
\author{Willem Conradie\inst{1,5}\orcidID{0000-0001-9906-4132} \and
Krishna Manoorkar\inst{2,4}\orcidID{0000-0003-3664-7757} \and
Alessandra Palmigiano\inst{2,3}\orcidID{0000-0001-9656-7527}\and
 Mattia Panettiere\inst{2,4}\orcidID{0000-0002-9218-5449}}%
\authorrunning{Conradie et al.}
%
\institute{School of Mathematics, University of the Witwatersrand, Johannesburg
 \and
Vrije Universiteit, Amsterdam, The Netherlands\\
\and
Department of Mathematics and Applied Mathematics, University of Johannesburg, South Africa
\and
KPMG, Amstelveen, The Netherlands
\and 
National Institute for Theoretical and Computational Sciences (NITheCS), South Africa
}
\maketitle              
\begin{abstract}
Graph-based frames have been introduced as a logical
framework which internalizes an inherent boundary to knowability (referred to as ‘informational entropy’), due, e.g., to perceptual,  evidential or linguistic limits.  They also support the interpretation of lattice-based (modal) logics as
hyper-constructive logics of evidential reasoning. Conceptually, the present paper proposes graph-based frames as a formal framework suitable for generalizing
Pawlak’s rough set theory to a setting in which inherent limits to knowability exist and need to be considered. Technically, the present paper establishes
systematic connections between the first-order correspondents of Sahlqvist modal
reduction principles on Kripke frames, and on the more general relational environments of graph-based and polarity-based frames. This work is part of a research line aiming at: (a)
comparing and inter-relating the various (first-order) conditions corresponding to
a given (modal) axiom in different relational semantics; (b) recognizing when
first-order sentences in the frame-correspondence languages of different 
relational structures encode the same “modal content”; (c) meaningfully transferring   relational properties  across different semantic contexts. The present paper
develops these results for the graph-based semantics, polarity-based semantics, and all Sahlqvist modal reduction principles.

As an application, we study well known modal axioms in rough set theory (such as those corresponding to  seriality, reflexivity, and transitivity) on graph-based frames and show that, although these axioms
correspond to different first-order conditions on graph-based frames, their intuitive meaning is retained. This allows us to introduce the notion of hyperconstructivist approximation spaces as  the subclass of graph-based frames defined by the first-order conditions   
corresponding to the same modal axioms defining classical generalized approximation spaces, and to transfer the properties and the intuitive understanding
of different approximation spaces to the more general framework of graph-based
frames. The approach presented in this paper provides a base for systematically
comparing and connecting various formal frameworks
in rough set theory, and for the transfer of insights across different frameworks.
\keywords{Graph-based frames  \and Rough set theory \and Modal  logic \and Modal reduction principles \and Correspondence theory}
\end{abstract}
\section{Introduction}

\paragraph{Generalizations of Rough Set Theory.} The contributions of the present paper lie within, and are motivated by, several 
research lines in  Rough Set Theory \cite{pawlak1984rough,pawlak1998rough,pawlak2007rudiments} 
aimed at generalizing Pawlak's original setting both  for extending its scope, and for making the theory more modular and systematic. Instrumental to these generalizations were several cross-fertilizations with 
other research lines such as (two-valued \cite{blackburn2002modal} and many-valued \cite{fitting1991many,fitting1992many,bou2011minimum,godo2003belieffunctions}) modal logic, and Formal Concept Analysis \cite{ganter2012formal}. 
Since Orlowska's work identifying the modal logic S5 as the logic of information spaces \cite{orlowska1994rough}, and building on the insight that formalizing the indiscernibility relation  as an equivalence relation might be too restrictive \cite{zakowski1983approximations,rasiowa1984rough},
Vakarelov \cite{vakarelov2005modal,vakarelov1991model} introduces a framework in which lower and upper approximations are associated with different indiscernibility relations; 
Yao and Lin \cite{yao1996-Sahlqvist} account systematically for various notions of generalized  approximation spaces, each  defined in terms of certain first-order properties (e.g.~reflexivity, symmetry, transitivity, seriality, Euclideanness) corresponding (via the well known Sahlqvist theory in modal logic \cite{sahlqvist1975completeness,van1984correspondence}) to  various well known modal axioms (which are, more specifically, {\em modal reduction principles} \cite{van1976mrp}, cf.~Definition \ref{ssec:mrp}); moreover, in the same paper, this approach is systematically  extended  to the setting of graded and probabilistic approximation spaces, via graded \cite{fine1972so} and probabilistic modal logic \cite{fagin1990logic}; using duality-theoretic and correspondence-theoretic techniques,
  Balbiani and Vakarelov \cite{balbiani2001first} introduce a sound and complete decidable poly-modal logic of weak and strong indiscernibility and complementarity for information systems. Also building on insights and results stemming from duality and representation theory for modal logic,
 Banerjee and Chakraborty \cite{banerjee1994rough,banerjee1996rough,banerjee2004algebras} study  the mathematical framework of rough set theory from a more abstract viewpoint grounded on the notion of {\em rough  algebras}, i.e.~modal algebras the non-modal reducts of which are more general than Boolean algebras, and hence  allow access to a base logic which, being {\em paraconsistent}, is strictly more general  than classical propositional logic. Based on this algebraic perspective,  proof-theoretic frameworks have also been developed for the logics of rough algebras \cite{saha2014algebraic,saha2016algebraic,ma2018sequent,greco2019proper,greco2019logics,icla_algebraic,icla_relational}. 

\paragraph{Unifying RST and FCA.} Closely related 
to these developments are proposals of unifying frameworks for rough set theory and formal concept analysis \cite{kent1996rough,yao2004comparative,yao2004concept,cattaneo2016connection,yao2006unifying,yao2016rough,formica2018integrating,yao2020three}, both in the two-valued and in the many-valued setting. This unification is particularly sought after for modelling classification problems characterized by partial or incomplete information.

Drawing on insights developed in the  research lines discussed above, in \cite{CONRADIE2021371}, 
a  framework unifying rough set theory and formal concept  analysis is proposed, based on duality-theoretic and algebraic insights stemming from the mathematical theory of modal logic \cite{ConPal12,CoGhPa14,ConPalSou}. The main tool exploited in \cite{CONRADIE2021371} is an embedding, defined as in  \cite{moshier2016relational}, from the class of sets to the class of formal contexts (cf.~\cite[Definition 3.3]{CONRADIE2021371}) which {\em preserves} the Boolean (powerset) algebra associated with each set, in the sense that the concept lattice of any formal context in the image of the embedding  is order-isomorphic to the powerset algebra of its pre-image (cf.~\cite[Proposition 3.4]{CONRADIE2021371}). This embedding can be extended (cf.~\cite[Proposition 3.7]{CONRADIE2021371}) to a {\em complex-algebra-preserving} embedding from the class of Kripke frames to the class of {\em enriched formal contexts}, or {\em polarity-based frames}.\footnote{A polarity (or formal context) is a triple $\mathbb{P} =(A, X, I)$ where $A$ and $X$ are two sets, and $I\subseteq A \times X$. For the modal signature $\tau = \{\Box, \Diamond\}$, a {\em polarity-based frame} is a structure $\mathbb{F} = (\mathbb{P}, R_{\Box}, R_{\Diamond})$, where $\mathbb{P}$ is polarity, and $R_\Box \subseteq A \times I, R_\Diamond \subseteq X \times I$ are relations that satisfy certain compatibility conditions (cf.~\cite[Definition 2]{TarkPaper2017}). } These  structures have been defined in \cite{conradie2016categories,TarkPaper2017} in the context of a research program aimed at developing the logical foundations of categorization theory, and consist of  formal contexts enriched with additional relations, each   defined on the basis of duality-theoretic considerations (discussed in \cite{Conradie2020NondistributiveLF}), and supporting the interpretation of a normal modal operator. In \cite{CONRADIE2021371}, the study of the preservation properties of the  embedding mentioned above  leads to the understanding that a connection can be established (cf.~\cite[Sections 3.4 and  4]{CONRADIE2021371}) between the first-order conditions on Kripke frames and those on enriched formal contexts corresponding to the  modal axioms considered in the literature in rough set theory  for generalizing approximation spaces. 

\paragraph{Rough concepts and beyond: towards parametric correspondence.} Besides motivating the identification of  {\em conceptual approximation spaces} (cf.~\cite[Section 5]{CONRADIE2021371}) as the subclass of enriched formal contexts  serving as  a unifying environment for rough set theory and formal concept analysis, 
%
these developments naturally elicit the questions of whether the connection found and exploited in \cite{CONRADIE2021371}, between the first-order conditions corresponding to a finite number of well known modal axioms in different semantic settings, can be systematically extended to significantly large classes of modal axioms, and also to different semantic settings.
Concrete examples pointing to the existence of such systematic connections  
cropped up, not only in relation with the semantics of enriched formal contexts (aka polarity-based frames) for lattice-based  modal logics (sometimes referred to as `non-distributive' modal logics), but also for an alternative   relational semantic setting for the same logics (namely,  {\em graph-based frames}), and have been discussed in 
\cite{TarkPaper2017,graph-based-wollic,CONRADIE2021371,conradie2016categories,vanBenthem2001}.
In \cite{conradie2022modal}, both questions are answered positively:  the above-mentioned connection is generalized from a finite set of modal axioms  to the class of {\em Sahlqvist modal reduction principles} \cite{vanBenthem:Reduction:Principles,van1976mrp} (cf.~Section \ref{ssec:mrp}) and is also extended  to  two-valued and many-valued Kripke frames, and  two-valued and many-valued enriched formal contexts. These results initiate a line of research which aims at making correspondence theory not just (methodologically) unified \cite{CoGhPa14}, but also
(effectively) parametric \cite{conradie2022modal}.

\paragraph{Extending parametric correspondence: the case of graph-based frames.} The contributions of the present paper include positive answers to these same questions  for Sahlqvist  modal reduction principles, relative to Kripke frames,   graph-based frames, and polarity-based frames, pivoting on {\em graph-based frames}. 
Graph-based frames \cite{conradie2015relational,graph-based-wollic,Conradie2020NondistributiveLF,craig2015tirs} (cf.~Definition \ref{def:graph:based:frame:and:model}) are relational structures based on graphs $(Z, E)$, where $Z$ is a nonempty set and $E\subseteq Z\times Z$ is a reflexive relation. Graph-based frames provide complete semantics to non-distributive modal logic \cite{graph-based-wollic}, which fact, as also discussed in \cite{Conradie2020NondistributiveLF},  supports the  interpretation of this logic as the logic of {\em informational entropy}, i.e.~an inherent boundary to knowability due e.g.~to perceptual, theoretical, evidential or linguistic limits. In  graphs $(Z, E)$ on which these relational structures are based, the reflexive relation $E$  is interpreted as the {\em inherent indiscernibility} relation induced by informational entropy, much in the same style as  Pawlak's  approximation spaces in rough set theory. However,  $E$ is only required to be reflexive but, in general, neither transitive nor symmetric, which is in line with proposals in rough set theory \cite{wybraniec1989generalization,yao1996-Sahlqvist,vakarelov2005modal}, also discussed above, that indiscernibility does not need to be modelled by equivalence relations. Another  key difference is that,  rather than generating modal operators which associate any subset of $Z$ with its definable $E$-approximations,  $E$ generates a complete lattice (i.e.~the lattice of formal $E^c$-concepts, as shown in the picture below). Hence, the role played by  $E$ in generating the 
complex algebras of graph-based frames  is analogous to the role played by  the order relation of intuitionistic Kripke frames in generating the Heyting algebra structure of their associated complex algebras: both $E$ and the order ultimately support the interpretation of the propositional (i.e.~non-modal) connectives. The resulting propositional fragment of non-distributive modal logic  is sound (and complete) w.r.t.~general lattices, and is hence  much weaker than both classical and intuitionistic logic. Rather than being a flaw, this is a feature of this framework, which precisely captures and internalizes the correct reasoning patterns under informational entropy into the propositional fragment of the language.
\begin{figure}[ht]
\label{img:usualimage}
\begin{center}
\resizebox{12cm}{!}{
\begin{tikzpicture}
\draw[very thick] (7.5, 2.5) -- (6, 3.5) --
	(6, 4.5) -- (7.5, 5.5) -- (9, 4)  -- (7.5, 2.5);
	\filldraw[black] (7.5,2.5) circle (3 pt);
	\filldraw[black] (6,3.5) circle (3 pt);
	\filldraw[black] (6,4.5) circle (3 pt);
	\filldraw[black] (7.5,5.5) circle (3 pt);
	\filldraw[black] (9,4) circle (3 pt);
	\draw (7.5, 2.2) node {$(\varnothing,uvw)$};
	\draw (5.7, 3.2) node {$(w,uv)$};
	\draw (5.7, 4.8) node {$(vw,u)$};
    \draw (5.5, 3.5) node {{$V(p)$}};
	\draw (7.5, 5.8) node {$(uvw,\varnothing)$};
		\draw (5.7, 5.8) node {$V(p\vee q) =$};
	\draw (9.3, 3.6) node {$(u,w)$};
    \draw (9.1, 4.5) node {$V(q)$};
    \draw (4.3,4) node{\Huge{$\leftrightsquigarrow$}};
\draw[very thick] (1, 3.5) -- (0, 4.5) --
	(2, 3.5) -- (1, 4.5);
	\draw[very thick] (0, 3.5) -- (2, 4.5);
	\filldraw[black] (0,3.5) circle (3 pt); 
	\filldraw[black] (0,4.5) circle (3 pt); 
	\filldraw[black] (1,3.5) circle (3 pt); 
	\filldraw[black] (1,4.5) circle (3 pt); 
	\filldraw[black] (2,3.5) circle (3 pt); 
	\filldraw[black] (2,4.5) circle (3 pt); 
    \draw (-0.4,4.5) node {$Z$};
    \draw (-0.4,4) node {$E^c$};
    \draw (-0.4,3.5) node {$Z$};
	\draw (0,4.8) node {$u$};
	\draw (1,4.8) node {$v$};
	\draw (2,4.8) node {$w$};
	\draw (0,3.2) node {$u$};
	\draw (1,3.2) node {$v$};
	\draw (2,3.2) node {$w$};
    
     \draw (-1.5,4) node{\Huge{$\leftrightsquigarrow$}};
     \filldraw[black] (-5,4) circle (3 pt);
     \draw (-5,3.7) node {$u$};
  \draw (-5, 4.9) node {$q$};
   \draw (-5, 3.3) node {$p$};
	\filldraw[black] (-4,4) circle (3 pt);
	 \draw (-4,3.7) node {$v$};
	 
	 \draw (-4, 5.2) node {$p\vee q$};
	  \draw (-5, 5.2) node {$p\vee q$};
	  
   \draw (-4, 3.3) node {$p$};
	\filldraw[black] (-3,4) circle (3 pt);
	 \draw (-3,3.7) node {$w$};
 \draw (-3, 4.9) node {$p$};
  \draw (-3, 3.3) node {$q$};
	   
 \draw (-3, 5.2) node {$p\vee q$};
	\draw[very thick, ->] (-4.8, 4) -- (-4.2, 4);
	\draw[very thick, ->] (-3.8, 4) -- (-3.2, 4);
	\draw[very thick,  <-] (-5.1, 4.1)  .. controls (-5.7, 4.8) and  (-4.2, 4.8)  .. (-4.9, 4.1); 
\draw[very thick,  <-] (-4.1, 4.1)  .. controls (-4.7, 4.8) and  (-3.2, 4.8)  .. (-3.9, 4.1); 
\draw[very thick,  <-] (-3.1, 4.1)  .. controls (-3.7, 4.8) and  (-2.2, 4.8)  .. (-2.9, 4.1); 
\end{tikzpicture}
}
\end{center}
\caption{From left to right, a reflexive graph, its associated formal context, and its concept lattice. Valuations of atomic propositions and formulas to elements of the concept lattice dually correspond to
satisfaction/refutation  relations between  states of the graph and formulas. In the picture, formulas above (resp.~below) a  state of the reflexive graph are satisfied (resp.~refuted) at that state.}
\end{figure}
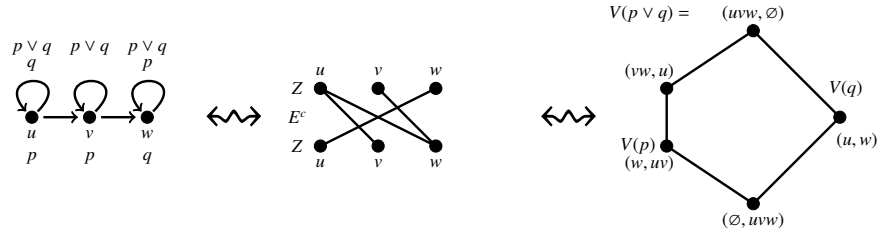

\paragraph{Inherent vs.~epistemic indiscernibility.} The environment of graph-based frames makes it possible to differentiate between inherent indiscernibility (modelled by $E$) and other (possibly more contingent, e.g.~subjective, or epistemic) types of indiscernibility (modelled by the additional relations which support the interpretation of the modal operators). 
 The conceptual difference between inherent and other types of indiscernibility is reflected into the different roles taken by these relations w.r.t.~the interpretation of the logical language: the elements of the concept lattice represented on the right-hand side of the picture do not represent definable approximations of predicates, but rather, they represent  `all there is to know' about predicates,  i.e.~the theoretical horizon to knowability, given the inherent boundary  encoded into $E$. In their turn, the elements of the concept lattices are approximated by the modal operators induced by the additional relations of the graph-based relational structures, and this approximation reflects the epistemic (subjective) indiscernibility of agents. 

\paragraph{Methodology for the shift from Kripke frames to graph-based frames.} The relation $E$ is intended as the generalization of the identity relation $\Delta$ on sets, and the embedding   from sets to reflexive graphs corresponding to this generalization, which is defined by the assignment $S\mapsto (S, \Delta_S)$, plays the same role in the present context as the role played by the embedding from sets to formal contexts discussed above, and displays properties analogous to those of that embedding (namely, it can be extended to a {\em complex algebra-preserving} embedding from Kripke frames to graph-based frames). Thus, conceptually, the environment of classical Kripke frames can be understood as the limit case of the present environment in which any two states are inherently indistinguishable only if they are identical. 


\paragraph{Graph-based frames and non-distributive logics as hyper-constructivist logics.} In \cite{Conradie2020NondistributiveLF}, it is argued that graph-based frames  support a view of non-distributive (modal) logics as {\em hyper-constructivist logics}, i.e.~logics in which the principle
of excluded middle fails at the meta-linguistic level.
Indeed, as illustrated by the picture above in the case of the proposition variable $q$, the {\em satisfaction} and  {\em refutation} set of a formula $\varphi$  on graph-based  models are not necessarily set-theoretic complements of one other; therefore, it is possible for a formula to be neither satisfied nor refuted at a state. The progressive generalizations, from classical propositional logic to intuitionistic logic, and from intuitionistic logic to 
the propositional fragment of non-distributive (modal) logic (hereafter: non-distributive logic) 
is reflected  in two ways: concerning relational models, the move from classical to intuitionistic logic is captured as the move from sets to preorders (i.e.~sets equipped with reflexive and transitive relations), and the move from intuitionistic to non-distributive logic is captured as the move from preorders to reflexive graphs (i.e.~transitivity is dropped); concerning the notion of truth, classical truth is replaced in intuitionistic logic with a finer-grained notion of {\em provable truth}, which makes the law of excluded middle fail at the level of the logical language, since at a given state there might be neither a proof of $\varphi$ nor a proof of $\neg\varphi$; however, at the metalogical level of satisfaction and refutation, if a formula is not satisfied at a given state of an intuitionistic model, then it is refuted at that state. 
\paragraph{Hyper-constructivist logics and evidential truth.} As discussed in \cite{Conradie2020NondistributiveLF}, the failure of the law of excluded middle at the metalogical level supported by the graph-based semantics suggests that the notion of truth supported by non-distributive (modal) logics is even more refined than intuitionistic truth, and can be intuitively construed as an {\em evidence-based truth}: that is, to refute a formula at a given state, the lack of positive evidence is not enough, but rather, there needs to be evidence {\em against}. 

\paragraph{Many-valued graph-based frames.} In \cite{Conradie2019/08},  the many-valued version is introduced of the graph-based semantics of  \cite{graph-based-wollic} for two axiomatic extensions of the basic normal non-distributive modal logic, and in particular their potential is explored for modelling situations in which informational entropy derives from the theoretical frameworks under which empirical studies are conducted. This has been proposed as a suitable framework 
 for modelling competing
theories in the empirical sciences.
In a similar vein, in \cite{CONRADIE2021115}, the many-valued graph-based semantic setting for non-distributive modal logics has been further extended to a multi-type setting, and has been proposed as a suitable formal framework for modelling socio-political competition.

For the readers' convenience, the relevant developments in  graph-based and polarity-based semantics for modal logic and of their applications are summarised in  Table \ref{tab:relatedwork}. 

\begin{table}[ht]

\begin{center}
\setlength\tabcolsep{2mm}.
\begin{tabular}{|l|l|l|}
    \hline
    Date &Development &Reference\\
    \hline
    2015
    &\begin{minipage}{0.75\textwidth}
    \vspace{2mm}
    Graph-based semantics for (non-distributive) modal logics 
    \vspace{2mm}
    \end{minipage}
    & \cite{conradie2015relational}\\ \hline
    2016
    &\begin{minipage}{0.75\textwidth}
    \vspace{2mm}
    Introduction of enriched RS-polarities  as semantic framework for an epistemic logic of categories and (formal) concepts. 
    \vspace{2mm}
    \end{minipage}
    &\cite{conradie2016categories}\\ \hline
    2017 
    &\begin{minipage}{0.75\textwidth}
    \vspace{2mm}
     Approach in \cite{conradie2016categories} generalized to arbitrary (hence not necessarily RS) polarities, adding a `common knowledge'-type construction and formalizing key concepts in categorization theory. 
    \vspace{2mm}
    \end{minipage}
    & \cite{TarkPaper2017}\\ \hline
    2019
    &\begin{minipage}{0.75\textwidth}
    \vspace{2mm}
    Complete axiomatization of modal logic of graph-based frames.\\
    Examples of correspondents of some well-known MRPs on graph-based frames; suggestive similarities noted, but no systematic theory of connection with classical Kripke frames.\\
    Introduction of the notion of parameterized composition ($E$-composition) in the context of graph-based frames.\\
    Interpretation of graph-based models for reasoning about informational entropy. 
    \vspace{2mm}
    \end{minipage}
    & \cite{graph-based-wollic}\\ \hline
    2019
    &\begin{minipage}{0.75\textwidth}
    \vspace{2mm}
    General correspondence theory for the class of  logics algebraically captured by varieties of normal lattice expansions(LE-logics), with  graph-based semantics as a particular example of associated relational semantics.\\
    Correspondents as pure quasi-inequalities to with standard translations can be applied. No general theory establishing systematic connections between  first-order correspondents of the same modal axioms across relational semantics, yet. 
    \vspace{2mm}
    \end{minipage}
    &\cite{CoPa:non-dist}\\ \hline
    2020
    &\begin{minipage}{0.75\textwidth}
    \vspace{2mm}
    Systematic study of interconnections between polarity-based and graph based-semantics. Methodology for extending both to accommodate arbitrary additional, normal  connectives. Study of possible interpretations of these semantic structures and arguable preservation of intuition/meaning of well-know epistemic axioms across these semantics and interpretations.
    \vspace{2mm}
    \end{minipage}
    &\cite{Conradie2020NondistributiveLF}\\  \hline
    2021
    &\begin{minipage}{0.75\textwidth}
    \vspace{2mm}
    Novel unification of Rough Set Theory and Formal Concept Analysis generalizing Pawlak’s original approximation with polarity-based frames.
    \vspace{2mm}
    \end{minipage}
    &\cite{CONRADIE2021371}\\ \hline
    2021
    &\begin{minipage}{0.75\textwidth}
    \vspace{2mm}
    Many-valued graph-based 
    semantics extended to a
    multi-type setting;  proposed as  formal framework for modelling
    socio-political competition.
    \vspace{2mm}
    \end{minipage}
    &\cite{Tark1}\\ \hline
    2022
    &\begin{minipage}{0.75\textwidth}
    \vspace{2mm}
    Introduction of many-valued graph-based frames and models and application to reasoning about competing theories.
    \vspace{2mm}
    \end{minipage}
    & \cite{Conradie2019/08}\\ \hline
    2022
    &\begin{minipage}{0.75\textwidth}
    \vspace{2mm}
    General theorems expressing the first-order frame correspondents of all Sahlqvist MRPs as inclusions of (parameterized) relational compositions, both over Kripke frames and polarity based frames in both crisp and many-valued cases together with systematic, effective, parametric translations between all of these. (The present paper does the same but for graph-based frames rather than polarity-based frames.) 
    \vspace{2mm}
    \end{minipage}
    & \cite{conradie2022modal}\\ \hline
\end{tabular}
\end{center}

\caption{ \label{table:developments}Summary of relevant related work on graph and and polarity based semantics.}
    \label{tab:relatedwork}
\end{table}

\paragraph{Main contributions.}   
Besides extending the network of systematic connections, established in \cite{conradie2022modal}, among various relational semantics for modal logic so as to include graph-based frames, the results of the present paper, which we summarise in Table \ref{table:results}, lay the foundations of a systematic generalization of Rough Set Theory to settings which require  forms of reasoning whose associated notion of truth is evidence-based in the sense indicated above. Mathematically, these settings can be formalized by environments whose naturally associated logics are  hyper-constructive.  

\paragraph{Comparison with  results on polarity-based frames.} As observed in \cite{Conradie2020NondistributiveLF}, although the polarity-based semantics and the graph-based semantics arise from the same duality-based methodology and they both provide complete semantics to  non-distributive (modal) logics, they support very different interpretations of what these logics are {\em about}, and   the  first-order languages canonically associated with these two classes of relational structures are also very different. This difference can be appreciated e.g.~when comparing the first-order correspondents on polarity-based frames and on graph-based frames of a simple and well known Sahlqvist modal reduction principle, as reported in the following table.\footnote{In any frame based on a polarity $(A, X, I)$, variables $x,y,z,\ldots$ range over $X$, and variables $a,b,c,\ldots$ range over $A$.}
   \begin{center}
   \begin{tabular}{|l|ll|}
   \hline
   Axiom & Correspondents &\\
   \hline

   & Kripke frames: & $\forall x\forall y\forall z \: ((xRy \ \& \ yRz) \Rightarrow xRz)$ \\
   $\Box p \vdash \Box\Box p$ & Polarity-based frames: & $\forall a\forall x (aIx \Rightarrow \forall y(\forall b(bR_\Box x \Rightarrow bIy) \Rightarrow aIy ) )$ \\
   & Graph-based frames: & $\forall z_1\forall z_2\forall z_3((\forall z_4(z_3Ez_4 \Rightarrow z_4 R_\Box z_1) \ \& \ z_2 R_\Box z_3 ) \Rightarrow z_2 R_\Box z_1 )$\\
   \hline
   \end{tabular}
   \end{center}
   In order to be able to systematically connect the first-order correspondents of any Sahlqvist modal reduction principle across these relational semantics, we need to establish a common ground between their associated first-order languages. Establishing this common ground is precisely what the core technical contribution of the present paper is about. Although, as indicated earlier, the methodology followed in the present paper is analogous to the one adopted in \cite{conradie2022modal}, transferring it to graph-based frames requires specific steps, which are presented in Sections \ref{sec: composing crisp} and \ref{sec:crisp}.

\begin{table}[h!] 
    \centering
    \begin{tabular}{|c|c|}
    \hline
    \begin{minipage}{0.20\textwidth}
    \centering
    \vspace{2mm}
    Hyper-constructivist\\ approximation spaces
    \vspace{2mm}
    \end{minipage}
    &
    \begin{minipage}{0.70\textwidth}
    \vspace{2mm}
    Introduction of hyper-constructivist approximation spaces (see Definition \ref{def:graph based approximation spaces}), i.e., approximation spaces based on graph based frames, where the law of excluded middle fails at the {\em frame level}. 
    \vspace{2mm}
    \end{minipage}\\
    \hline
    \begin{minipage}{0.20\textwidth}
    \vspace{2mm}\centering
    $E$-mediated and non $E$-mediated compositions
    \vspace{2mm}
    \end{minipage}
    &
    \begin{minipage}{0.70\textwidth}
    \vspace{2mm}
    Introduction and study of the fundamental properties of  three notions of relational compositions  (cf.~Definition \ref{def:relational composition}) which reflect the semantics of the compositions of connectives in graph-based frames (see Lemma \ref{lemma:properties of homogeneous composition}), and which  coincide with the usual composition of relations in the special case in which a graph-based frame is a Kripke frame (see Lemma \ref{lemma:compdia compbox delta}).
    \vspace{2mm}
    \end{minipage}\\
    \hline
    \begin{minipage}{0.20\textwidth}
    \vspace{2mm}\centering
    Relational languages\\ $\mathsf{GRel}_\mathcal{L}$ and $\mathsf{KRel}_\mathcal{L}$
    \vspace{2mm}
    \end{minipage}
    &
    \begin{minipage}{0.70\textwidth}
    \vspace{2mm}
    Introduction of suitable relational languages for Kripke frames and graph-based frames, which serve as the natural environment for the bridge between the first-order correspondents of modal reduction principles in the two semantics. Indeed, we show that such first-order correspondents can always be written as  `inclusions of relations', i.e.~term inequalities in these relational languages (see Lemmas \ref{lemma:molecular polarity} and \ref{lemma:molecular kripke}, and the discussions following the lemmas). These definitions hinge on the notions of composition discussed above.
    \vspace{2mm}
    \end{minipage}\\
    \hline
    \begin{minipage}{0.20\textwidth}\centering
    \vspace{2mm}
    Shifting of term inequalities in $\mathsf{KRel}_\mathcal{L}$
    \vspace{2mm}
    \end{minipage}
    &
    \begin{minipage}{0.70\textwidth}
    \vspace{2mm}
    Introduction of the notion of {\em shifting} between $\mathsf{KRel}_\mathcal{L}$-inequalities and $\mathsf{GRel}_\mathcal{L}$-inequalities   (cf.~Definition \ref{def:shifting}). This notion is exemplified in Table \ref{fig:MPS:Famous} for the best known mrps considered in Rough Set Theory. 
    \vspace{2mm}
    \end{minipage}\\
  \hline
    \begin{minipage}{0.20\textwidth}\centering
    \vspace{2mm}
    Parametric correspondence across Kripke frames and graph-based frames
    \vspace{2mm}
    \end{minipage}
    &
    \begin{minipage}{0.70\textwidth}
    \vspace{2mm}   
    Systematic connection between the correspondents of Sahlqvist mrps in Kripke frames and graph-based frames formulated and established  in terms of  the notion of shifting (see Theorem \ref{thm: sahlqvist shifting}).  
    \vspace{2mm}
    \end{minipage}\\
    \hline
    \begin{minipage}{0.20\textwidth}\centering
    \vspace{2mm}
    Lifting of term inequalities in $\mathsf{GRel}_\mathcal{L}$
    \vspace{2mm}
    \end{minipage}
    &
    \begin{minipage}{0.70\textwidth}
    \vspace{2mm}
    Introduction of the notion of {\em lifting} of $\mathsf{GRel}_\mathcal{L}$-inequalities to $\mathsf{PRel}_\mathcal{L}$-inequalities   (cf.~Definition \ref{def:lifting graph frames}), a relational language for polarity-based frames.
    \vspace{2mm}
    \end{minipage}\\
  \hline
    \begin{minipage}{0.20\textwidth}\centering
    \vspace{2mm}
    Parametric correspondence across graph-based and polarity-based frames
    \vspace{2mm}
    \end{minipage}
    &
    \begin{minipage}{0.70\textwidth}
    \vspace{2mm}   
    Systematic connection between the correspondents of Sahlqvist mrps in graph-based frames and polarity-based frames formulated and established  in terms of  the notion of lifting (see Theorem \ref{prop: sahlqvist lifting to polarities}). This extends the results in \cite{conradie2022modal}.  
    \vspace{2mm}
    \end{minipage}\\
    \hline
    \end{tabular}
    \caption{Main contributions of the present paper}
    \label{table:results}
\end{table}

\paragraph{Structure of the paper.} In Section \ref{sec:Preliminaries}, after introducing  notation  used throughout the paper (cf.~Section \ref{ssec:notation}), we collect preliminaries on the various (set-based and algebraic) generalizations of  Rough Set Theory (cf.~Sections \ref{ssec:approx spaces} and \ref{ssec:algebraic review}), on graph-based frames as semantic structures for general lattice-based modal logic (cf.~Section \ref{ssec:logics}), and on modal reduction principles (cf.~Section \ref{ssec:mrp}). 
In Section \ref{ssec:twocases studies}, we discuss two case studies in which informational entropy plays a relevant role and is accounted for in the setting of graph-based frames; 
in Section \ref{sec:Rough set theory on graph-based frames}, we introduce the notion of hyperconstructivist approximation spaces and discuss the methodology underlying this definition as an instance of the general transfer results presented in the following sections. 
In Section \ref{sec: composing crisp}, we introduce and study the properties of  several notions of compositions of relations in the settings of Kripke frames and of graph-based frames. Thanks to these definitions,   suitable propositional languages $\mathsf{GRel}_{\mathcal{L}}$ and $\mathsf{KRel}_{\mathcal{L}}$ can be introduced  (cf.~Section \ref{ssec:krel and grel}) in which first-order conditions on Kripke frames and on polarity-based frames can be represented.
In Section \ref{sec:crisp}, we show that  the correspondents of all Sahlqvist modal reduction principles on Kripke frames and on graph-based frames can be represented as term-inequalities of the languages $\mathsf{GRel}_{\mathcal{L}}$ and $\mathsf{KRel}_{\mathcal{L}}$. Represented in this way, a systematic connection can be established between them (cf.~Section \ref{ssec:shifting}), and with polarity-based frames (cf.~Section \ref{ssec:lifting}).
In Section \ref{sec:Applications to rough set theory}, we apply the main result of the previous section to describe the relationship between the first-order correspondents of well known modal axioms in Rough Set Theory 
in graph-based frames and in Kripke frames. We show that, based on this mathematical relationship, also the  intuitive meaning of these axioms can be systematically transferred across these different semantics. Finally, we conclude in Section \ref{sec:conclusions}.

\section{Preliminaries}\label{sec:Preliminaries}
The present section collects basic definitions and facts about  the various intersecting topics and results on which the contributions of the present paper  build. In Section \ref{ssec:notation}, we introduce the notation used throughout the paper. In Section \ref{ssec:approx spaces}, we collect the basic definitions  on approximation spaces and their modal logic. In Section \ref{ssec:algebraic review}, we review the literature on various algebraic approaches to rough set theory. In Section \ref{ssec:logics}, we collect basic definitions and facts on non-distributive modal logics and their graph-based frames. 
In Section \ref{ssec:mrp}, we collect preliminaries on inductive modal reduction principles and discuss a general and compact representation of their associated  output when running the algorithm ALBA \cite{CoPa:non-dist} on them. 

\subsection{Notational conventions and basic constructions}\label{ssec:notation}
We let $\Delta_U$ denote the identity relation on a set $U$, and we will drop the subscript when it causes no ambiguity. The superscript $(\cdot)^c$ denotes the relative complement of a subset of a given set. Hence, for any binary relation $R\subseteq U\times V$, we let $R^c\subseteq U\times V$ be defined by  $(u, v)\in R^c$ iff $(u, v)\notin R$.  For any such $R$ and any $U'\subseteq U$ and $V'\subseteq V$, we  let $R[U']: = \{v\in V\mid (u, v)\in R \mbox{ for some } u\in U'\}$ and $R^{-1}[V']: = \{u\in U\mid (u, v)\in R \mbox{ for some } v\in V'\}$, and write $R[u]$ and $R^{-1}[v]$ for $R[\{u\}]$ and $R^{-1}[\{v\}]$, respectively. Any such $R$ gives rise to the
  {\em semantic modal operators} $\langle R\rangle, [R]: \mathcal{P}(V)\to \mathcal{P}(U)$ s.t.~
  $\langle R\rangle W : = R^{-1}[W] $ and $ [R]W: = (R^{-1}[W^c])^c$ for any $W\subseteq V$.
For any  $T\subseteq U\times V$, and any $U'\subseteq U$  and $V'\subseteq V$, let
\begin{equation}\label{eq:def round square brackets}
\small
\begin{array}{ll}
T^{(1)}[U']:=\{v\mid \forall u(u\in U'\Rightarrow uTv) \}  & T^{(0)}[V']:=\{u\mid \forall v(v\in V'\Rightarrow uTv) \} \\
T^{[1]}[U']:=\{v\mid \forall u(u\in U'\Rightarrow uT^cv) \}  \quad & T^{[0]}[V']:=\{u\mid \forall v(v\in V'\Rightarrow uT^cv) \}
\end{array}
\end{equation}
Well known properties of these constructions (cf.~[\cite{davey2002introduction}, Sections 7.22-7.29]) are stated in the following lemma. 
 \begin{lemma} \label{lemma: basic} For any relation $T \subseteq U \times V$, and all $V'\subseteq V$, $U'\subseteq U$, $\mathcal{U}\subseteq \mathcal{P}(U)$, and $\mathcal{V}\subseteq \mathcal{P}(V)$,
\begin{enumerate}
\item $X_1\subseteq X_2\subseteq U$ implies $T^{(1)}[X_2]\subseteq T^{(1)}[X_1]$, and $Y_1\subseteq Y_2\subseteq V$ implies $T^{(0)}[Y_2]\subseteq T^{(0)}[Y_1]$.
\item $U'\subseteq T^{(0)}[V']$ iff  $V'\subseteq T^{(1)}[U']$.
 \item $U'\subseteq T^{(0)}[T^{(1)}[U']]$ and $V'\subseteq T^{(1)}[T^{(0)}[V']]$.
 \item $T^{(1)}[U'] = T^{(1)}[T^{(0)}[T^{(1)}[U']]]$ and $T^{(0)}[V'] = T^{(0)}[T^{(1)}[T^{(0)}[V']]]$.
 \item $T^{(0)}[\bigcup\mathcal{V}] = \bigcap_{V'\in \mathcal{V}}T^{(0)}[V']$ and $T^{(1)}[\bigcup\mathcal{U}] = \bigcap_{U'\in \mathcal{U}}T^{(1)}[U']$.
\end{enumerate}
\end{lemma}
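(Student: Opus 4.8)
The plan is to recognize that the two maps $U'\mapsto T^{(1)}[U']$ and $V'\mapsto T^{(0)}[V']$ constitute an antitone Galois connection between $\mathcal{P}(U)$ and $\mathcal{P}(V)$, after which items (1)--(5) are exactly the standard formal consequences of such an adjunction; notably, none of them requires any hypothesis on $T$.

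I would prove item (2) first, as it is the adjunction law and the heart of the matter. Unfolding the definitions, $U'\subseteq T^{(0)}[V']$ asserts that every $u\in U'$ lies in $T^{(0)}[V']$, i.e.\ $\forall u\forall v\,(u\in U' \ \& \ v\in V'\Rightarrow uTv)$; unfolding $V'\subseteq T^{(1)}[U']$ yields the same quantified statement with the two universal quantifiers in the opposite order. Since these are logically equivalent, (2) follows at once. Next, I would establish item (1) directly from the definition: if $X_1\subseteq X_2$ and $v$ is $T$-related to every element of $X_2$, then it is in particular $T$-related to every element of $X_1$, giving $T^{(1)}[X_2]\subseteq T^{(1)}[X_1]$, and the claim for $T^{(0)}$ is symmetric. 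Item (3) then drops out of (2): since $T^{(1)}[U']\subseteq T^{(1)}[U']$ holds trivially, instantiating (2) with $V':=T^{(1)}[U']$ yields $U'\subseteq T^{(0)}[T^{(1)}[U']]$, and the companion inclusion is obtained symmetrically.

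Item (4) is the one assembling the previous pieces, and is where I would be most careful. Applying item (3) to the set $T^{(1)}[U']$ in the role of $V'$ gives $T^{(1)}[U']\subseteq T^{(1)}[T^{(0)}[T^{(1)}[U']]]$. For the reverse inclusion, I would start again from (3), namely $U'\subseteq T^{(0)}[T^{(1)}[U']]$, and apply the antitonicity of $T^{(1)}$ from item (1), which reverses this to $T^{(1)}[T^{(0)}[T^{(1)}[U']]]\subseteq T^{(1)}[U']$; combining the two gives the desired equality, and the second identity in (4) is dual. Finally, item (5) I would verify by a direct chain of equivalences: $u\in T^{(0)}[\bigcup\mathcal{V}]$ iff $uTv$ holds for every $v$ belonging to some member of $\mathcal{V}$, iff $uTv$ holds for every $v\in V'$ and every $V'\in\mathcal{V}$, iff $u\in\bigcap_{V'\in\mathcal{V}}T^{(0)}[V']$; the identity for $T^{(1)}$ is symmetric.

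Since every step is either a routine unfolding of the defining quantifier expressions or a generic Galois-connection manipulation, I do not anticipate any genuine difficulty. The only point demanding attention is the bookkeeping in item (4), where the direction of the antitone inclusion supplied by (1) must be applied correctly to the unit inequality of (3) so as to close the two inclusions without circularity.
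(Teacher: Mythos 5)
Your proof is correct and complete; each item is verified by the standard Galois-connection argument, and the only delicate point (the direction of the antitone map in item (4)) is handled properly. The paper itself gives no proof but simply cites this as well-known material from Davey--Priestley on Galois connections, and your argument is precisely the standard one that reference supplies, so there is nothing to compare beyond noting the agreement.
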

All properties listed in Lemma  \ref{lemma: basic} hold with $(\cdot)^{[0]}$ and $(\cdot)^{[1]}$ replacing $(\cdot)^{(0)}$ and $(\cdot)^{(1)}$ respectively, by instantiating  $T: = T^c$, so we will refer to this lemma also when invoking these properties.

\begin{lemma}[\cite{conradie2022modal}, Lemma 3.10]
\label{lemma:properties of square bracket superscript}
For any relation $T \subseteq U \times V$, and any $W\subseteq V$,
\begin{center}
    $T^{[0]}[W^c] = (T^{-1}[W^c])^c = [T]W $ \ and \ $(T^{[0]}[W])^c = T^{-1}[W] = \langle T\rangle W$.
\end{center}
\end{lemma}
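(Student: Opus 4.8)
The plan is to derive all four equalities from a single identity, namely that
\[
T^{[0]}[V'] = (T^{-1}[V'])^c
\]
holds for every $V'\subseteq V$; once this is in hand, the two displayed chains follow by specialising $V'$ and by unfolding the definitions of $[T]W$ and $\langle T\rangle W$ given just above \eqref{eq:def round square brackets}. Thus the real content lies in one elementary set-theoretic computation, and everything else is bookkeeping with complements.

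First I would establish this displayed identity by unwinding the definitions directly. By the definition of $T^{[0]}$ in \eqref{eq:def round square brackets}, an element $u$ lies in $T^{[0]}[V']$ exactly when $uT^cv$ for every $v\in V'$, i.e.~when $(u,v)\notin T$ for all $v\in V'$. On the other side, $u\in T^{-1}[V']$ means that $(u,v)\in T$ for \emph{some} $v\in V'$, so $u\in (T^{-1}[V'])^c$ means precisely that no such $v$ exists, i.e.~that $(u,v)\notin T$ for all $v\in V'$. The two conditions coincide, which is just the fact that the negation of an existential quantifier becomes a universal quantifier over the complemented relation. This is the only genuine step, and it is routine.

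With the identity available, the first chain is immediate: instantiating $V':=W^c$ gives $T^{[0]}[W^c]=(T^{-1}[W^c])^c$, and the right-hand side is by definition exactly $[T]W$. For the second chain I would instead instantiate $V':=W$, obtaining $T^{[0]}[W]=(T^{-1}[W])^c$, and then take complements of both sides; since complementation is an involution, this yields $(T^{[0]}[W])^c=T^{-1}[W]$, whose right-hand side is by definition $\langle T\rangle W$. I do not expect any real obstacle here, as the statement is purely definitional; the only thing to watch is the careful matching of complements and the interplay between the universal quantifier hidden in $T^{[0]}$ and the existential quantifier hidden in $T^{-1}$.
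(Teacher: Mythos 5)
Your proof is correct: the single identity $T^{[0]}[V'] = (T^{-1}[V'])^c$ does follow immediately from unwinding the definitions in \eqref{eq:def round square brackets}, and both displayed chains are then obtained by the instantiations and complementations you describe. The paper itself gives no proof (it cites \cite{conradie2022modal} for this lemma), and your argument is exactly the routine definitional verification one would expect there.
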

\subsection{Approximation spaces and their modal logic}\label{ssec:approx spaces}

 {\em Approximation spaces} have been introduced by Pawlak \cite{pawlak1982rough} as the basic formal environment for Rough Set Theory.  These are structures $\mathbb{X} = (S, R)$ such that $S$
is a nonempty set, and $R\subseteq S\times S$ is an equivalence relation. For any such $\mathbb{X}$ and any $Z\subseteq S$, the {\em upper} and {\em lower approximations} of $Z$ are respectively defined as follows:
\[\overline{Z}: = \bigcup\{R[z]\mid z\in Z\} \quad \mbox{ and }\quad \underline{Z}: = \bigcup\{R[z]\mid z\in Z\mbox{ and } R[z]\subseteq Z\}. \]
A {\em rough set} of $\mathbb{X}$ is a pair $(\underline{Z},\overline{Z})$ for any  $Z\subseteq S$ (cf.~\cite{banerjee1996rough}).
 Since $R$ is an equivalence relation, \[\overline{Z} = \langle R\rangle Z  \quad \mbox{ and }\quad \underline{Z} = [R] Z,\] with $\langle R\rangle$ and $[R]$  validating the axioms of the classical normal modal logic S5 (cf.~\cite{orlowska1994rough}). 
 
 As approximation spaces are exactly the Kripke frames for  the normal modal logic S5 \cite{orlowska1994rough}, this logic  completely axiomatizes Pawlak's rough set theory. Given a language for classical modal logic over a set $\Prop$ of proposition variables  and  a valuation $V:\Prop\to \mathcal{P}(S)$ on $\mathbb{X}$, the satisfiability on models $M = (\mathbb{X}, V)$ is defined as usual in modal logic. The extension $\val{\phi}\subseteq S$ of any formula $\phi$ is defined recursively as follows:
\begin{center}
\begin{tabular}{llll}
$M, w \Vdash p$                    & iff & $w \in V(p)$&$\val{p} = V(p)$\\
$M, w \Vdash \phi \lor \psi$    & iff & $ w \in \val{\phi} $ or $w \in\val{\psi}$& $\val{\phi \lor \psi} = \val{\phi} \cup \val{\psi}$\\
$M, w \Vdash \phi \land \psi$ & iff & $ w \in \val{\phi} $ and $w \in \val{\psi}$&$\val{\phi \land \psi} = \val{\phi} \cap \val{\psi}$\\
$M , w \Vdash \neg\phi$ & iff & $ w \notin \val{\phi}$&$\val{\neg\phi} = \val{\phi}^c$\\
$M, w \Vdash \Box\phi$ & iff & $wR u$ implies $u  \in \val\phi$ &$\val{\Box\phi }=  [R]\val{\phi}$\\
$M, w \Vdash \Diamond\phi$ & iff & $wRu$ for some $u \in \val{\phi}$ &$\val{\Diamond\phi} = \langle R\rangle\val{\phi}$\\
\end{tabular}
\end {center}
As is well known \cite{orlowska1994rough}, the modal logic S5 is the axiomatic extension of the basic normal modal logic $K$ with the following axioms, respectively corresponding to the  reflexivity, transitivity and symmetry of the indiscernibility relation $R$.
\[ \Box\phi \rightarrow \phi \quad\quad  \Box \phi \rightarrow \Box\Box\phi \quad\quad  \phi \rightarrow \Box\Diamond\phi.\]

Motivated by the idea that the indiscernibility relation does not need to be an equivalence relation, the following definition provides the environment for a generalized and more modular definition of approximation space.  

\begin{definition}[\cite{yao1998generalized}, Section 2]
\label{def: gen approx space}
  A {\em generalized approximation space} is a tuple $\mathbb{X}= (S,R)$ such that $S$  is a non-empty set, and $R \subseteq S \times S$ is any relation.  For any such $\mathbb{X}$ and any $Z \subseteq S$, the {\em upper} and {\em lower approximations} of $Z$ are respectively
  \[\overline{Z}: = \langle R \rangle Z=  \bigcup\{R^{-1}[z]\mid z\in Z\} \quad \mbox{ and }\quad \underline{Z}: = [R] Z=  \{s \in S \mid R[s]\subseteq Z\}. \]
The following  table lists the names of subclasses of generalized approximation spaces in terms of their defining first-order condition. 
\begin{center}
\begin{tabular}{|c|c|}
\hline
\textbf{Class} & \textbf{first-order condition}\\
\hline
     serial & $\forall s\exists t(R(s, t))$  \\
    \hline
     reflexive &  $\forall s(R(s, s))$\\
     \hline
     symmetric & $\forall s\forall  t(R(s, t)\Rightarrow R(t, s))$\\
        \hline
     transitive & $\forall s\forall t(\exists u(R(s, u)\ \&\ R(u, t))\Rightarrow R(s, t))$\\
          \hline
     Euclidean & $\forall u \forall v \forall w ( R(u,v) \ \&\ R(u,w) \Rightarrow R(v,w))$\\
        \hline
\end{tabular}
\end{center}
\end{definition}
The following proposition directly derives from well known results in the Sahlqvist theory of classical normal modal logic, and has made it possible to completely characterize  various subclasses  of generalized approximation spaces in terms of  well known modal axioms.
\begin{proposition}[\cite{yao1998generalized}, Section 2] \label{prop:crisp Sahlqvist}
For any generalized approximation space $\mathbb{X}= (S,R)$, 
\begin{enumerate}
    \item $\mathbb{X} \Vdash  \Box p \rightarrow \Diamond p\, $  iff $\, \mathbb{X}$ is serial.
    \item $\mathbb{X} \Vdash \Box p \rightarrow  p\, $  iff $\, \mathbb{X}$ is reflexive.
     \item $\mathbb{X} \Vdash p \rightarrow  \Box\Diamond  p\, $  iff $\, \mathbb{X}$ is symmetric.
      \item $\mathbb{X} \Vdash \Box p \rightarrow  \Box\Box p\, $  iff $\, \mathbb{X}$ is transitive.
       \item $\mathbb{X} \Vdash \Diamond p \rightarrow  \Box \Diamond  p\, $  iff $\, \mathbb{X}$ is Euclidian.
\end{enumerate}
\end{proposition}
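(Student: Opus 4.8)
The plan is to treat all five items uniformly, exploiting that each displayed axiom is a Sahlqvist formula (indeed a modal reduction principle), so that Sahlqvist's theorem already guarantees that each is locally equivalent on every frame to a first-order condition; the concrete work is only to verify that this correspondent is the one tabulated. For each item I would argue the two implications separately. The soundness direction (frame condition $\Rightarrow$ validity) is a direct unfolding of the satisfaction clauses recalled above: assuming the relevant closure property of $R$, one fixes an arbitrary valuation $V$ and state $w$ at which the antecedent holds, rewrites ``$w\in\val{\Box\psi}$'' as ``$R[w]\subseteq\val{\psi}$'' and ``$w\in\val{\Diamond\psi}$'' as ``$R[w]\cap\val{\psi}\neq\varnothing$'', and chases the successors through the frame property into the consequent. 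For instance, for item 4, if $R$ is transitive and $R[w]\subseteq V(p)$, then for any $u,v$ with $wRu$ and $uRv$ transitivity gives $wRv$, hence $v\in V(p)$; this shows $R[u]\subseteq V(p)$ for every $u\in R[w]$, i.e.\ $w\in\val{\Box\Box p}$.

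For the converse (validity $\Rightarrow$ frame condition) I would run the standard minimal-valuation argument: to force the frame condition at chosen witnesses while keeping the antecedent true, assign to $p$ the smallest extension required by the antecedent. In the items whose antecedent is $\Box p$ (items 1, 2, 4) I take $V(p):=R[w]$, which makes $\Box p$ true at $w$; validity of the axiom then forces the consequent at $w$, and reading off what that says yields exactly $R[w]\neq\varnothing$ (seriality), $w\in R[w]$ (reflexivity), and $R[u]\subseteq R[w]$ for all $u\in R[w]$ (transitivity). In the items whose antecedent is atomic or $\Diamond p$ (items 3 and 5) I instead take $V(p)$ to be a singleton: for item 3, set $V(p):=\{w\}$ so that $w\in\val p$, whence validity gives $uRw$ for every $u$ with $wRu$ (symmetry); for item 5, given witnesses with $R(u,w)$, set $V(p):=\{w\}$ so that $u\in\val{\Diamond p}$, whence validity gives $vRw$ for every $v$ with $uRv$, which is precisely the Euclidean condition $R(u,v)\ \&\ R(u,w)\Rightarrow R(v,w)$.

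The computations are routine, so I do not expect a genuine obstacle; the only point requiring care is the bookkeeping of the modal alternations and of the direction of $R$. Because the semantics above reads $\Box$ and $\Diamond$ along $R$, one must ensure that each minimal valuation is placed at the correct world and that the quantified variables match the tabulated first-order sentence. This is most delicate for the Euclidean case, where the three variables must be aligned with $R(u,v)$, $R(u,w)$, and $R(v,w)$ in exactly the right order. Once the appropriate valuation is fixed in each case, both implications close immediately.
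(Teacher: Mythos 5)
Your proposal is correct: the minimal-valuation argument (taking $V(p):=R[w]$ for the $\Box p$-antecedent cases and $V(p):=\{w\}$ for items 3 and 5) is exactly the standard Sahlqvist computation, and all five soundness and converse directions check out, including the variable bookkeeping for the Euclidean case. The paper itself offers no proof here — it cites the result from the literature and remarks that it ``directly derives from well known results in the Sahlqvist theory of classical normal modal logic'' — so your argument simply spells out the routine verification the paper takes for granted.
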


\subsection{The algebraic approach to Rough Set Theory} \label{ssec:algebraic review}
Several logical frameworks motivated by Rough Set Theory have been introduced based on classes of generalized Boolean algebras with operators. Their associated logics typically have a propositional base that is strictly weaker than classical propositional logic. 
\paragraph{Rough algebras and their generalizations.}Chakraborty and Banerjee \cite{banerjee1996rough} introduced  topological quasi-Boolean algebras (tqBas) as a generalization of S5 Boolean  algebras with operators (BAOs).  
\begin{definition}[\cite{banerjee1996rough}]
 A {\em topological quasi-Boolean algebra} (tqBa) is an algebra $\mathbb{T} =(\mathbb{L}, I)$ such that $\mathbb{L}=(\mathrm{L}, \vee, \wedge,\neg, \top,\bot)$ is a De Morgan algebra and for all $a,b \in \mathbb{L}$,
 \[
 T1.\, I(a \wedge b)= Ia \wedge Ib \quad T2.\, IIa=Ia \quad T3.\, Ia \leq a \quad T4.\, I \top=\top. 
 \]
\end{definition}
For any $a \in A$, let $Ca : =\neg I \neg a$. Subclasses of tqBas have been defined as in the following table.
\begin{center}
 \begin{tabular}{|c|c|c|}
\hline
     \textbf{Algebras} & \textbf{Acronyms} & \textbf{Axioms}\\
     \hline
     topological quasi Boolean algebra 5& tqBa5& T5:  $CIa=Ia$\\
          \hline
     intermediate algebra of type 1 &  IA1& T5, T6: $Ia \vee \neg Ia =\top$\\
          \hline
     intermediate algebra of type 2 &  IA2& T5, T7: $I(a \vee b) =Ia \vee Ib$\\
          \hline
     intermediate algebra of type 3 &  IA3& T5, T8: $Ia \leq Ib$ and $Ca \leq Cb$ imply $a \leq b$\\
          \hline
     pre-rough algebra & pra& T5, T6, T7, T8.\\
     \hline
\end{tabular}   
\end{center}
A {\em rough algebra} is  a complete and completely distributive pre-rough algebra. Different sub-classes of tqBas have been studied extensively in the literature on rough set theory \cite{banerjee1996rough,banerjee2004algebras}. Notice that the lower and upper approximation operators $I$ and $C$ are finitely meet-preserving and join-preserving operators on $\mathbb{L}$ respectively. Thus, these operators can serve as algebraic interpretations of normal modal operators  $\Box$ and $\Diamond$ on  $\mathbb{L}$. Thus, tqBa5s and their sub-classes can be seen as normal modal expansions of De Morgan algebras.

\paragraph{Rough algebras based on distributive lattices.}In \cite{kumar2015algebras}, Kumar and Banerjee define rough lattices as an algebraic abstraction of  algebras obtained from quasi order-generated covering-based approximation spaces (QOCAS).  These approximation spaces can be seen as the approximation spaces arising from granules seen as open neighborhoods in a topological space. 

\begin{definition}[\cite{kumar2015algebras}, Definition 4.4]\label{def:rough lattice}
 An algebra $\mathbb{L} = (\mathrm{L}, \vee, \wedge, I, C, 0,1)$ is a  {\em rough lattice}  if $(\mathrm{L}, \vee, \wedge,0,1)$ is a completely distributive lattice join-generated by its completely join-irreducible elements and $I, C: \mathrm{L} \to \mathrm{L}$ satisfy the following conditions for all $a, b\in \mathrm{L}$ and any $A\subseteq \mathrm{L}$:
 \begin{enumerate}
     \item $I (\bigwedge A) = \bigwedge\{ I(a)\mid a\in A\}$ \quad and \quad $C (\bigvee A) = \bigvee\{C(a) \mid a\in A\}$.
     \item $C (\bigwedge A) = \bigwedge\{ C(a)\mid a\in A\}$ \quad and \quad $I (\bigvee A) = \bigvee\{I(a) \mid a\in A\}$.
      
      \item $Ia \leq a$ \quad and \quad $a \leq Ca$.
      \item $I1 =1$ \quad and \quad $C0=0$.
     \item  $IIa = I a$ \quad and \quad $CC a =  Ca$.
     \item $ICa =Ca$ \quad and \quad $CI a =Ia$.
     \item $Ia \leq Ib$ and $Ca \leq Cb$ imply $a \leq b$. 
 \end{enumerate}
\end{definition}
Again, note that, as lower and upper approximation operators are completely join-preserving and meet-preserving respectively, these algebras are perfect distributive modal algebras \cite{GeNaVe05}, and hence they are naturally endowed with a (bi)Heyting algebra structure.  
Kumar \cite{kumar2020study} also introduced {\em rough Heyting algebras} to describe the interaction between  the approximation operators $I$ and $C$ and Heyting implication.

\begin{definition} [\cite{kumar2020study}, Definition 40]
  An algebra $\mathbb{L} = (L, \vee, \wedge, \rightarrow, I, C, 0,1)$ is a {\em rough Heyting algebra} iff  its $\{\rightarrow \}$-free reduct is a   rough lattice, its $\{I, C\}$-free reduct is a Heyting algebra, and the following identities are valid:
  \begin{enumerate}
      \item $I(a \rightarrow b) = (Ia \rightarrow Ib) \wedge (Ca \rightarrow Cb)$.
      \item $C(a \rightarrow b)= Ca \rightarrow Cb$. 
  \end{enumerate}
\end{definition}

\paragraph{Rough algebras based on concept lattices.} In \cite{CONRADIE2021371},  several classes of algebras are defined as varieties of modal expansions of  general (i.e.~possibly non-distributive) lattices. Since, following the approach of {\em formal concept analysis} \cite{ganter2012formal}, general lattices can be studied  as hierarchies of {\em formal concepts} arising from relational databases,  the role of modal operators can be seen as  approximating  categories or concepts under incomplete or uncertain information \cite{CONRADIE2021371}.
\begin{definition} [cf.~\cite{CONRADIE2021371}, Section 6]
  An {\em abstract conceptual rough algebra} (resp.~conceptual rough algebra) (acra (resp.~cra))   is an algebra $\mathbb{A}= (\mathbb{L}, \Box, \Diamond)$  such that $\mathbb{L}$ is a bounded lattice (resp.~complete lattice)  and $\Box$ and $\Diamond$ are unary operations on $\mathbb{L}$ such that $\Diamond$ is finitely join-preserving (resp.~completely join-preserving) and $\Box$ is finitely meet-preserving (resp.~completely meet-preserving),  and for any $a \in \mathbb{L}$,
  \[
  \Box a \leq \Diamond a.
  \]
  Subclasses of  abstract conceptual rough algebras (resp.~conceptual rough algebras) have been considered, which are reported in the following table. 
  \begin{center}
    \begin{tabular}{|c|c|}
  \hline
       \textbf{Class} & \textbf{Axioms} \\
   \hline 
    reflexive & $\Box a \leq a $ and $a \leq \Diamond a$\\
    \hline
    transitive & $\Box a \leq   \Box\Box a  $ and $\Diamond  \Diamond a \leq \Diamond a$\\
    \hline
    symmetric & $a \leq   \Box\Diamond a $ and $ \Diamond  \Box a \leq  a$\\
    \hline 
    dense & $ \Box\Box a \leq   \Box a$ and $\Diamond a \leq \Diamond \Diamond a$\\
    \hline
  \end{tabular}  
  \end{center}
  In particular, a  {\em conceptual tqBa} is a reflexive and transitive acra. Subclasses of tqBas have been defined as reported in the following table.
  \begin{center}
      \begin{tabular}{|c|c|}
      \hline
           \textbf{Class}& \textbf{Axioms}  \\
      \hline
          Conceptual tqBa5 & $\Diamond\Box a \leq \Box a $ and $ \Diamond a \leq \Box\Diamond a$\\
    \hline
    conceptual IA1 & $\Box (a \vee b) \leq \Box a \vee \Box b $ and $\Diamond a \wedge \Diamond b \leq \Diamond (a \wedge b)$\\
    \hline
    conceptual IA2 & $(\Box a \leq \Box b\ \& \ \Diamond a \leq \Diamond b)\Rightarrow a \leq b$\\
    \hline
      \end{tabular}
  \end{center}
  A {\em conceptual pre-rough algebra} is a conceptual tqBA5 which is both conceptual IA1 and IA2. 
\end{definition}
\paragraph{Rough algebras based on posets.} Cattaneo \cite{cattaneo1996mathematical} further generalizes rough set theory by defining the notion of abstract approximation spaces on posets.
\begin{definition}[\cite{cattaneo1996mathematical}, Section 1]
  An {abstract approximation space} is a structure $\mathcal{U} = (\Sigma, \mathbb{O}(\Sigma), \mathbb{C}(\Sigma) ) $ such that  $(\Sigma, \leq , 0,1)$ is a bounded poset, $\mathbb{O}(\Sigma)$ and $\mathbb{C}(\Sigma)$ are subposets of $\Sigma$ containing $0$ and $1$ such that inner and outer approximation maps $i: \Sigma \to \mathbb{O}(\Sigma)$ and  $c:\Sigma \to \mathbb{C}(\Sigma) $ exist such that, for any $x\in \Sigma$, any $\alpha\in \mathbb{O}(\Sigma)$, and any $\gamma\in \mathbb{C}(\Sigma)$,
  \begin{enumerate}
      \item $i(x) \leq x$ and $x \leq c(x)$. 
      \item  $\alpha \leq x \Rightarrow \alpha \leq i(x)$.
       \item  $x \leq \gamma \Rightarrow  c(x) \leq \gamma$.
  \end{enumerate}
\end{definition}
Algebraic structures arising from rough operators defined on various classes of lattice-based algebras such as  orthomodular lattices \cite{dai2021rough},  quasi-Brouwer-Zadeh distributive lattices \cite{cattaneo2004algebraic}, MV-algebras \cite{rasouli2010roughness}, cylindric algebras have been studied in literature. 

\paragraph{Fuzzy and probabilistic rough algebras.} Several fuzzy and probabilistic generalizations of rough set theory have been studied extensively in the literature \cite{yao2008probabilistic,liu2008axiomatic,dubois1990rough}. Yao \cite{yao2008probabilistic} introduces a framework for  probabilistic rough set approximations using the notion of rough membership. 

\begin{definition} [\cite{yao2008probabilistic}, Section 3.1]
 A {\em probabilistic approximation space} is a structure $\mathbb{X} = (U,E,P)$ such that $E \subseteq U \times U$ is an equivalence relation and $P: 2^U \to [0,1]$ is a probability function. The  {\em rough membership} in any $A \subseteq U$  is given by the map $\mu_A: U \to [0,1]$ defined by the assignment $x\mapsto P(A \mid [x])$, 
 where $[x]\coloneqq \{ y \in U \mid x E y\} $. 
\end{definition}

\begin{definition}[\cite{yao2008probabilistic}, Section 4.3]
  For all parameters  $\alpha, \beta \in [0,1]$, and any  probabilistic approximation space $\mathbb{X} = (U,E,P)$, the generalized probabilistic approximation operators $\underline{appr_\alpha},\overline{appr_\beta}: \mathcal{P}(U)\to \mathcal{P}(U)$  are defined as follows.  If $0 \leq \beta < \alpha  \leq 1$, then for any $A\subseteq U$,
\[\underline{appr_\alpha}(A) = \{ x \in U \mid P(A\mid [x]) \geq \alpha \}\quad \text{and} \quad \overline{appr_\beta} (A)= \{ x \in U \mid P(A\mid [x]) < \beta\}.\]
If $\alpha=\beta\neq 0$, then 
\[\underline{appr_\alpha}(A) = \{ x \in U \mid P(A\mid [x]) > \alpha\} \quad \text{and} \quad \overline{appr_\alpha}(A) = \{ x \in U \mid P(A\mid [x]) \leq  \alpha\}.\]
\end{definition}

 Given the properties they were shown to satisfy (e.g.~monotonicity, distributivity over union and intersection), the  approximation operators  $\underline{appr_\alpha}$ and $\overline{appr_\beta}$ can be regarded as paramentric   fuzzy regular modal operators $\Box_\alpha$ and $\Diamond_\beta$ on the powerset algebra $\mathcal{P}(U)$, respectively. 

Using both constructive and algebraic methods, Wu and Zhang \cite{wu2004constructive} introduce a generalization of rough set theory to a fuzzy framework. 
\begin{definition}[\cite{wu2004constructive}, Definition 4]
  A generalized {\em fuzzy approximation space}  is a structure $\mathbb{X} =(W,U,R)$ such that $U$ and $W$ are  non-empty finite sets, and $R: U \times W \to [0,1]$ is a fuzzy relation. For any fuzzy subset $A$ of $W$, the lower and upper approximations of $A$, $\underline{R}(A)$ and $\overline{R}(A)$, are defined as follows: For any $x \in U$, 
  \[
  \overline{R}(A)(x) = \bigvee_{y \in W} (R(x,y) \wedge A(y)) \quad \text{and} \quad \underline{R}(A)(x) = \bigwedge_{y \in W} ((1-R(x,y)) \vee A(y)).
  \]
  A generalized fuzzy approximation space $\mathbb{X} =(W,U,R)$  with 
  $U=W$ is: 
  \begin{enumerate}
      \item {\em reflexive} if $R(x,x)=1$ for all $x \in U$.
      \item {\em symmetric} if $R(x,y)= R(y,z)$ for all $x,y \in U$. 
      \item {\em transitive} if $R(x,z) \geq \bigvee_{y \in U} (R(x,y) \wedge R(y,x))$ for all $x,z \in U$. 
      \item {\em serial} if $\exists y (R(x,y)=1)$ for all $x \in U$. 
  \end{enumerate}
\end{definition}
Let $\mathcal{F}(W)$ denote the set of all fuzzy subsets of $W$. 
The fuzzy approximation operators $\underline{R}$ and  $\overline{R}$ defined above satisfy the following properties \cite{wu2004constructive}: for all $A, B \in \mathcal{F}(W)$, and  any $\alpha \in [0,1]$, 
\begin{center}
    \begin{tabular}{lcl}
   1.$\underline{R}(A)= \neg \overline{R} (\neg A) $ & \quad &  2.$\overline{R}(A)= \neg \underline{R} (\neg A) $\\
   3.$ \underline{R}(A \vee \hat{\alpha})=\underline{R}(A) \vee \hat{\alpha}  $ &\quad &4.$ \overline{R}(A \wedge \hat{\alpha})=\overline{R}(A) \wedge \hat{\alpha}  $  \\
   5.$ \underline{R}(A \wedge B)=\underline{R}(A) \wedge \underline{R}(B)$&\quad &6.$ \overline{R}(A \vee B)=\overline{R}(A) \vee \overline{R}(B)$, \\
\end{tabular}
\end{center}
where $\hat{\alpha}$ is the constant fuzzy set $\hat{\alpha}(x)= \alpha $ for all $x \in U \cup W$. As lower and upper approximation operators $\underline{R}$ and $\overline{R}$ are completely meet-preserving and join-preserving, these operators are semantic modal operators $\Box$ and $\Diamond$ on $\mathcal{F}(W)$. 

The following proposition generalizes Proposition \ref{prop:crisp Sahlqvist} to the fuzzy setting.
The inclusion symbol between fuzzy sets in the proposition denotes as usual that the function on the left-hand side of the inclusion is pointwise dominated by the one on the right-hand side. We let $A$  denote an element of $\mathcal{F}(W)$, $x,y$  denote elements of $W$, and $\alpha$  denote a constant in $[0,1]$. For any $Y \subseteq W$, we let $1_Y$  denote the fuzzy set which assigns membership value $1$ to all the elements of $Y$ and $0$ to all the elements not in $Y$.
\begin{proposition}[\cite{wu2004constructive},  Section 3.3] \label{prop:correspondence Kripke}
A generalized fuzzy approximation space $\mathbb{X} =(W,W,R)$ is:
\begin{enumerate}
    \item reflexive iff $A \subseteq \overline{R}(A)$ or equivalently iff $\underline{R}(A) \subseteq A$ for every $A\in \mathcal{F}(W)$. 
    \item symmetric iff for all $x, y\in W$ \[\underline{R}(1_{U-\{x\}})(y)= \underline{R}(1_{U-\{y\}})(x) \text{ or equivalently iff  } \overline{R}(1_x)(y)= \overline{R}(1_y)(x)  .\]
     \item transitive iff $\underline{R}(A) \subseteq \underline{R}(\underline{R}(A))$ or equivalently iff $\overline{R}(\overline{R}(A))\subseteq \overline{R}(A) $ for every $A\in \mathcal{F}(W)$. 
     \item serial  iff $ \underline{R}(\varnothing) = \varnothing $ iff $\overline{R}(W)= W$ iff $\forall \alpha (\overline{R}(\hat{\alpha})= \hat{\alpha}) $ iff  $\forall \alpha (\underline{R}(\hat{\alpha})= \hat{\alpha}) $ iff $\forall  A  (\underline{R}(A) \subseteq \overline{R}(A)) $. 
\end{enumerate}
\end{proposition}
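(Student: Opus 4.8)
The plan is to treat all four items as many-valued Sahlqvist correspondences, exploiting three structural facts: the truth-value lattice $[0,1]$ is a complete chain; $\overline{R}$ is completely join-preserving and $\underline{R}$ completely meet-preserving (immediate from their definitions); and $\underline{R}(A) = \neg\overline{R}(\neg A)$ together with $\overline{R}(A) = \neg\underline{R}(\neg A)$ (items 1--2 of the list preceding the proposition). This last duality means that, within each item, it suffices to establish one of the two stated equivalent semantic conditions, since the other follows by applying $\neg$ and substituting $\neg A$ for $A$. So I would reduce the work to a single condition per item and carry out all computations on the $\overline{R}$-side.

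The computational backbone is a short list of evaluations on the characteristic fuzzy sets that play the role of ALBA's minimal valuations. Directly from the definitions one obtains, for all $x,w$ and all $\alpha\in[0,1]$,
\[
\overline{R}(1_w)(x) = R(x,w), \qquad \overline{R}(\hat\alpha)(x) = \alpha \wedge \bigvee_y R(x,y), \qquad \underline{R}(\hat\alpha)(x) = \alpha \vee \Big(1 - \bigvee_y R(x,y)\Big),
\]
the last two using distributivity of $\wedge$ over $\bigvee$ in the chain (and De Morgan for the $\underline{R}$ computation). With these in hand, each equivalence splits cleanly: the \emph{frame property $\Rightarrow$ semantic condition} direction is a direct substitution, while the converse is obtained by feeding in the appropriate characteristic set. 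For reflexivity I note $\overline{R}(A)(x)\geq R(x,x)\wedge A(x)$, so $R(x,x)=1$ gives $A\subseteq\overline{R}(A)$; conversely, evaluating $A\subseteq\overline{R}(A)$ at $A=1_x$ and the point $x$ yields $1 = 1_x(x) \leq \overline{R}(1_x)(x) = R(x,x)$. For transitivity, expanding $\overline{R}(\overline{R}(A))(x)$ and using distributivity collapses it to $\bigvee_w\big(A(w)\wedge\bigvee_z(R(x,z)\wedge R(z,w))\big)$; transitivity bounds the inner join by $R(x,w)$ and yields $\overline{R}(\overline{R}(A))\subseteq\overline{R}(A)$, while testing that inclusion on $A=1_w$ returns exactly $\bigvee_z(R(x,z)\wedge R(z,w))\leq R(x,w)$. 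Symmetry is essentially immediate: since $\overline{R}(1_x)(y)=R(y,x)$, the stated identity $\overline{R}(1_x)(y)=\overline{R}(1_y)(x)$ is literally $R(y,x)=R(x,y)$.

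The delicate item, and where I would spend most care, is seriality, where six conditions are asserted equivalent. My plan is to show that each of the middle conditions is equivalent to the single pointwise equation $\bigvee_y R(x,y)=1$ for all $x$: the evaluations above give this for $\overline{R}(W)$, $\underline{R}(\varnothing)$, and both $\hat\alpha$-conditions (for the latter one reads off $\bigvee_y R(x,y)=1$ by letting $\alpha$ range, taking $\alpha=1$ for necessity). For the D-axiom form $\underline{R}(A)\subseteq\overline{R}(A)$: if some $y_0$ has $R(x,y_0)=1$ then $\underline{R}(A)(x)\leq A(y_0)\leq\overline{R}(A)(x)$, and conversely testing on $A=\varnothing$ forces $1-\bigvee_y R(x,y)\leq 0$. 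The one genuinely non-formal step is the equivalence between $\bigvee_y R(x,y)=1$ and the existential seriality condition $\exists y\,R(x,y)=1$: this uses that $W$ is finite, so the join is a finite maximum and therefore attains the value $1$ precisely when one of its arguments equals $1$. This attainment is exactly what would break in an infinite-domain fuzzy setting, so the finiteness assumed in the definition of fuzzy approximation space is what makes the whole seriality cluster collapse to a single property.
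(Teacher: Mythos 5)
Your proof is correct, but note that the paper itself offers no proof to compare against: Proposition \ref{prop:correspondence Kripke} is imported verbatim from Wu and Zhang \cite{wu2004constructive} as a preliminary, so any argument here is necessarily your own. On its merits, your argument is sound and well organised. The duality $\underline{R}(A)=\neg\overline{R}(\neg A)$ does legitimately halve the work in items 1 and 3 (and transports the $\underline{R}$-form of the symmetry identity to the $\overline{R}$-form), the three ``minimal valuation'' evaluations $\overline{R}(1_w)(x)=R(x,w)$, $\overline{R}(\hat\alpha)(x)=\alpha\wedge\bigvee_y R(x,y)$ and $\underline{R}(\hat\alpha)(x)=\alpha\vee(1-\bigvee_y R(x,y))$ are all correct, and each converse direction is obtained by the right instantiation ($1_x$ at $x$ for reflexivity, $1_w$ for transitivity, $\varnothing$ or $\alpha=1$ for seriality). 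Your treatment of seriality is the most valuable part: you are right that the five semantic conditions all reduce to $\bigvee_y R(x,y)=1$, and that the final link to the \emph{existential} frame condition $\exists y\,(R(x,y)=1)$ is exactly where the finiteness of $W$ (assumed in the paper's Definition of generalized fuzzy approximation space) is indispensable --- this attainment issue is silently glossed over in the source and is worth making explicit. Two small remarks: you tacitly read the paper's definitions of symmetry and transitivity in their intended (corrected) form, since as printed they contain typos ($R(x,y)=R(y,z)$ and $R(x,z)\geq\bigvee_y(R(x,y)\wedge R(y,x))$), which is the right call but should be flagged; and in the D-axiom direction you use the existential witness $y_0$ rather than the join condition, which is harmless only because finiteness has already identified the two.
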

Several types of  many-valued algebras generalizing the interval $[0,1]$ have been studied in the literature as an environment for fuzzy rough set theory \cite{wu2004constructive,CONRADIE2021371,sun2008fuzzy,radzikowska2004fuzzy,chakraborty2011fuzzy}. 

\subsection{Basic normal non-distributive modal logic and its graph-based semantics}\label{ssec:logics}
The present section collects basic definitions and facts from \cite{GeHa01,CoPa:non-dist,Conradie2020NondistributiveLF}.
Let $\Prop$ be a (countable or finite) set of atomic propositions. The language $\mathcal{L}$ of the {\em basic normal non-distributive modal logic} is defined as follows:
\begin{center}
  $\varphi := \bot \mid \top \mid p \mid  \varphi \wedge \varphi \mid \varphi \vee \varphi \mid \Box \varphi \mid  \Diamond\varphi$,  
\end{center}
where $p\in \Prop$. 
The {\em basic}, or {\em minimal normal} $\mathcal{L}$-{\em logic} is a set $\mathbf{L}$ of sequents $\phi\vdash\psi$  with $\phi,\psi\in\mathcal{L}$, containing the following axioms:

{{\centering
\begin{tabular}{ccccccccccccc}
     $p \vdash p$ & \quad\quad & $\bot \vdash p$ & \quad\quad & $p \vdash p \vee q$ & \quad\quad & $p \wedge q \vdash p$ & \quad\quad & $\top \vdash \Box\top$ & \quad\quad & $\Box p \wedge \Box q \vdash \Box(p \wedge q)$
     \\
     & \quad & $p \vdash \top$ & \quad & $q \vdash p \vee q$ & \quad & $p \wedge q \vdash q$ &\quad &  $\Diamond\bot \vdash \bot$ & \quad & $\Diamond(p \vee q) \vdash \Diamond p \vee \Diamond q$
\end{tabular}
\par}}

		and closed under the following inference rules:
		{\small{
		\begin{displaymath}
			\frac{\phi\vdash \chi\quad \chi\vdash \psi}{\phi\vdash \psi}
			\quad
			\frac{\phi\vdash \psi}{\phi\left(\chi/p\right)\vdash\psi\left(\chi/p\right)}
			\quad
			\frac{\chi\vdash\phi\quad \chi\vdash\psi}{\chi\vdash \phi\wedge\psi}
			\quad
			\frac{\phi\vdash\chi\quad \psi\vdash\chi}{\phi\vee\psi\vdash\chi}
			\quad
			\frac{\phi\vdash\psi}{\Box \phi\vdash \Box \psi}
\quad
\frac{\phi\vdash\psi}{\Diamond \phi\vdash \Diamond \psi}
		\end{displaymath}
		}}
By an {\em $\mathcal{L}$-logic} we understand any  extension of $\mathbf{L}$  with $\mathcal{L}$-axioms $\phi\vdash\psi$.

\medskip

Graph-based models for non-distributive logics arise in close connection with the topological structures dual to general lattices in Plo\v{s}\v{c}ica's representation \cite{ploscica1994} (see also \cite{craig-priestley,craig2015tirs}). 

A {\em reflexive graph} is a structure $\mathbb{G} = (Z, E)$ such that $Z$ is a nonempty set, and $E\subseteq Z\times Z$ is a reflexive relation.\footnote{In the context of the generalization of rough set theory pursued in the present paper (cf.~Section \ref{sec:Rough set theory on graph-based frames}), the intuitive reading of $zEz'$ is `$z$ is inherently indiscernible from $z'$' for all $z, z'\in Z$.} We let $D\subseteq Z\times Z$ be defined as $xDa$ iff $aEx$. For every set $S$, we let $\mathbb{G}_S=(S,\Delta)$.   From now on, we assume that all graphs we consider are reflexive even when we drop the adjective.  
Any graph $\mathbb{G} = (Z, E)$  defines the polarity\footnote{ A {\em formal context} \cite{ganter2012formal}, or {\em polarity},  is a structure $\mathbb{P} = (A, X, I)$ such that $A$ and $X$ are sets, and $I\subseteq A\times X$ is a binary relation. Every such $\mathbb{P}$ induces maps $(\cdot)^\uparrow: \mathcal{P}(A)\to \mathcal{P}(X)$ and $(\cdot)^\downarrow: \mathcal{P}(X)\to \mathcal{P}(A)$, respectively defined by the assignments $B^\uparrow: = I^{(1)}[B]$ and $Y^\downarrow: = I^{(0)}[Y]$. A {\em formal concept} of $\mathbb{P}$ is a pair $c = (B, Y)$ such that $B\subseteq A$, $Y\subseteq X$, and $B^{\uparrow} = Y$ and $Y^{\downarrow} = B$. Given a formal concept $c = (B,Y)$ we often write $\val{c}$ for $B$ and $\descr{c}$ for $Y$ and, consequently, $c = (\val{c}, \descr{c})$.  The set $L(\mathbb{P})$  of the formal concepts of $\mathbb{P}$ can be partially ordered as follows: for any $c, d \in L(\mathbb{P})$, \[c\leq d\quad \mbox{ iff }\quad \val{c}\subseteq \val{d} \quad \mbox{ iff }\quad \descr{d}\subseteq \descr{c}.\]
	With this order, $L(\mathbb{P})$ is a complete lattice, the {\em concept lattice} $\mathbb{P}^+$ of $\mathbb{P}$. Any complete lattice $\mathbb{L}$ is isomorphic to the concept lattice $\mathbb{P}^+$ of some polarity $\mathbb{P}$.} $\mathbb{P_G} = (Z_A,Z_X, I_{E^{c}})$ where $Z_A = Z = Z_X$ and  $I_{E^{c}}\subseteq Z_A\times Z_X$ is defined as $aI_{E^{c}} x$ iff $aE^cx$.  More generally, any relation $R\subseteq Z\times Z$ `lifts' to relations $I_{R^c}\subseteq Z_A\times Z_X$ and $J_{R^c}\subseteq Z_X\times Z_A$
defined as $aI_{R^c} x$ iff $a R^c x$ and $xJ_{R^c} a$ iff $x R^c a$. In what follows, for any $W\subseteq Z$, we will write $(W)_A\subseteq Z_A$ and $(W)_X\subseteq Z_X$ as its corresponding `lifted' copies.  
The next lemma follows directly from the definitions above: 
\begin{lemma}
	\label{lemma:round and square brackets}
	For any relation $R\subseteq Z \times Z$ and any $Y, B \subseteq Z$,
	\[I^{(0)}_{R^c}[Y] = (R^{[0]}[Y])_A\quad  I^{(1)}_{R^c}[B] = (R^{[1]}[B])_X\quad J^{(0)}_{R^c}[B] = (R^{[0]}[B])_X\quad  J^{(1)}_{R^c}[Y] = (R^{[1]}[Y])_A.\]
\end{lemma}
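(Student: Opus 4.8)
The plan is to prove each of the four identities by directly unwinding the definitions of the round- and square-bracket operations in \eqref{eq:def round square brackets} together with the definitions of the lifted relations $I_{R^c}$ and $J_{R^c}$ given just above the lemma. Each side of each identity is, by construction, a subset of $Z_A$ or $Z_X$ carved out by a single universally quantified first-order condition on $R^c$, so the whole argument reduces to checking that, once the lifting is taken into account, the two defining conditions coincide. No auxiliary result is needed beyond the defining clauses $a\, I_{R^c}\, x$ iff $a\, R^c\, x$ and $x\, J_{R^c}\, a$ iff $x\, R^c\, a$.

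Concretely, consider the first identity $I^{(0)}_{R^c}[Y] = (R^{[0]}[Y])_A$. Since $I_{R^c}\subseteq Z_A\times Z_X$, the definition of $(\cdot)^{(0)}$ gives $I^{(0)}_{R^c}[Y] = \{a\in Z_A \mid \forall x(x\in Y\Rightarrow a\, I_{R^c}\, x)\}$, reading $Y$ as its copy $(Y)_X\subseteq Z_X$. Rewriting $a\, I_{R^c}\, x$ as $a\, R^c\, x$ turns this into $\{a \mid \forall x(x\in Y\Rightarrow a\, R^c\, x)\}$, which is exactly the copy in $Z_A$ of $R^{[0]}[Y] = \{z \mid \forall v(v\in Y\Rightarrow z\, R^c\, v)\}$. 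The remaining three identities are handled identically: for \textbf{(2)} one applies $(\cdot)^{(1)}$ and again uses $a\, I_{R^c}\, x$ iff $a\, R^c\, x$; for \textbf{(3)} and \textbf{(4)} one uses instead $x\, J_{R^c}\, a$ iff $x\, R^c\, a$, which interchanges the roles of the two arguments of $R^c$.

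The only genuine point of care—and hence the main obstacle, such as it is—is the bookkeeping. One must keep track of whether a given $W\subseteq Z$ enters as its copy $(W)_A$ or $(W)_X$, of which of $Z_A$ or $Z_X$ each bracket operation ranges over, and—crucially for \textbf{(3)} and \textbf{(4)}—of the fact that $J_{R^c}$ reverses the orientation of $R^c$ relative to $I_{R^c}$, so that the square-bracket operation paired with $J_{R^c}$ in the $(0)$-case is $R^{[0]}$ while in the $(1)$-case it is $R^{[1]}$, matching the pattern of \textbf{(1)} and \textbf{(2)} but with the two sorts swapped. Once these identifications are fixed, the quantified conditions on the two sides become literally identical, and all four equalities follow.
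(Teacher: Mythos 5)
Your proof is correct and matches the paper's treatment: the paper gives no explicit argument, stating only that the lemma ``follows directly from the definitions above,'' and your direct unwinding of the defining clauses of $(\cdot)^{(0)}$, $(\cdot)^{(1)}$, $(\cdot)^{[0]}$, $(\cdot)^{[1]}$, $I_{R^c}$ and $J_{R^c}$ is precisely that intended verification. The sort-bookkeeping you flag is indeed the only point of care, and you handle it correctly.
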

The complete lattice $\mathbb{G}^{+}$ associated with a graph $\mathbb{G}$ is the concept lattice of $\mathbb{P_G}$.
By specializing  \cite[Proposition 3.1]{Conradie2020NondistributiveLF} to  $\mathbb{P_G}$ we get the following.
\begin{proposition}
\label{prop:generators}
For every graph $\mathbb{G} =(Z,E)$, the  lattice $\mathbb{G}^{+}$ is completely join-generated  by the set $\{\mathbf{z}_A= ( z^{[10]}, z^{[1]})\mid z \in Z\}$ and completely meet-generated by the set $\{ \mathbf{z}_X= ( z^{[0]}, z^{[01]}) \mid z \in Z\}$.
\end{proposition}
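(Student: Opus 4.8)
The plan is to recognize the statement as the specialization to $\mathbb{P_G}$ of the general order-theoretic fact that the concept lattice of \emph{any} polarity is completely join-generated by its object concepts and completely meet-generated by its attribute concepts, and then to rewrite those generating concepts in the $E$-based notation via Lemma~\ref{lemma:round and square brackets}. Recall that $\mathbb{G}^{+}$ is by definition the concept lattice of $\mathbb{P_G} = (Z_A, Z_X, I_{E^c})$, so its elements are exactly the pairs $(B,Y)$ with $B\subseteq Z_A$, $Y\subseteq Z_X$, $\up{B}=Y$ and $\down{Y}=B$, where $\up{(\cdot)} = I_{E^c}^{(1)}[\cdot]$ and $\down{(\cdot)} = I_{E^c}^{(0)}[\cdot]$.

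First I would invoke \cite[Proposition 3.1]{Conradie2020NondistributiveLF}, which yields that $\mathbb{G}^{+}$ is completely join-generated by the object concepts $\{(\ud{\{z\}}, \up{\{z\}})\mid z\in Z\}$ and completely meet-generated by the attribute concepts $\{(\down{\{z\}}, \du{\{z\}})\mid z\in Z\}$. For a self-contained argument this is the standard FCA fact: for any concept $c=(B,Y)$ one has $B=\bigcup_{z\in B}\{z\}$, and the Galois properties of Lemma~\ref{lemma: basic} (monotonicity of the closure together with $B=\ud{B}$) give $\bigcup_{z\in B}\ud{\{z\}}=B$, whence $c=\bigvee\{(\ud{\{z\}},\up{\{z\}})\mid z\in B\}$; the meet-generation claim is order-dual.

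It then remains only to translate the object and attribute concepts into the $E$-notation. Applying Lemma~\ref{lemma:round and square brackets} with $R=E$ to the singleton $\{z\}$ gives $\up{\{z\}} = I_{E^c}^{(1)}[\{z\}] = (E^{[1]}[\{z\}])_X = (z^{[1]})_X$, and consequently $\ud{\{z\}} = I_{E^c}^{(0)}[(z^{[1]})_X] = (E^{[0]}[z^{[1]}])_A = (z^{[10]})_A$, so that the object concept of $z$ is precisely $\mathbf{z}_A = (z^{[10]}, z^{[1]})$ under the identification of lifted copies. Symmetrically $\down{\{z\}} = (z^{[0]})_A$ and $\du{\{z\}} = (z^{[01]})_X$, so the attribute concept of $z$ is $\mathbf{z}_X = (z^{[0]}, z^{[01]})$. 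Substituting these identities into the generation statement yields the proposition.

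I do not expect a genuine obstacle here: the mathematical content is entirely that of the cited general result, and the only care required is bookkeeping — keeping the round-bracket operators $(\cdot)^{(0)},(\cdot)^{(1)}$ of the polarity $\mathbb{P_G}$ distinct from the square-bracket operators $(\cdot)^{[0]},(\cdot)^{[1]}$ on $E$, and correctly tracking the passage between $Z$ and its lifted copies $Z_A,Z_X$ through Lemma~\ref{lemma:round and square brackets}.
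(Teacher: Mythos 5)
Your proposal is correct and matches the paper's own treatment: the paper proves this proposition simply by citing and specializing \cite[Proposition 3.1]{Conradie2020NondistributiveLF} to the polarity $\mathbb{P_G}$, exactly as you do, and your additional bookkeeping via Lemma~\ref{lemma:round and square brackets} correctly identifies the object and attribute concepts with $\mathbf{z}_A=(z^{[10]},z^{[1]})$ and $\mathbf{z}_X=(z^{[0]},z^{[01]})$. The self-contained join/meet-generation argument you sketch is also sound, though the paper does not spell it out.
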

%
%

\begin{definition}\label{def:graph:based:frame:and:model}
A {\em graph-based} $\mathcal{L}$-{\em frame}  is a structure $\mathbb{F} = (\mathbb{G},  R_{\Diamond}, R_{\Box})$ where $\mathbb{G} = (Z,E)$ is a reflexive graph,\footnote{\label{footnote: abbreviations} Applying  notation \eqref{eq:def round square brackets} to a graph-based $\mathcal{L}$-frame $\mathbb{F}$, we sometimes abbreviate $E^{[0]}[Y]$ and $E^{[1]}[B]$ as $Y^{[0]}$ and $B^{[1]}$, respectively, for all $Y, B\subseteq Z$. If $Y= \{y\}$ and  $B = \{b\}$, we write $y^{[0]}$ and $b^{[1]}$ for $\{y\}^{[0]}$ and $\{b\}^{[1]}$, and write $Y^{[01]}$ and $B^{[10]}$ for $(Y^{[0]})^{[1]}$ and $(B^{[1]})^{[0]}$, respectively. Lemma \ref{lemma:round and square brackets} implies that $Y^{[0]} = I_{E^c}^{(0)}[Y] = Y^{\downarrow}$ and $B^{[1]} = I_{E^c}^{(1)}[B] = B^{\uparrow}$, where $(\cdot)^\downarrow$ and $(\cdot)^\uparrow$ are the maps associated with $\mathbb{P_G}$.} and  $R_{\Diamond}$ and $R_{\Box}$  are binary relations on $Z$ satisfying the following  $E$-{\em compatibility} conditions (notation defined in \eqref{eq:def round square brackets}): for all $b,y \in Z$,
\begin{align*}
(R_\Box^{[0]}[y])^{[10]} &\subseteq R_\Box^{[0]}[y] \qquad   &(R_\Box^{[1]}[b])^{[01]} \subseteq R_\Box^{[1]}[b]\\
(R_\Diamond^{[0]}[b])^{[01]} &\subseteq R_\Diamond^{[0]}[b] \qquad   &(R_\Diamond^{[1]}[y])^{[10]} \subseteq R_\Diamond^{[1]}[y].
\end{align*}	
	
	The {\em complex algebra} of a graph-based $\mathcal{L}$-frame $\mathbb{F}= (\mathbb{G},  R_{\Diamond}, R_{\Box})$ is the complete $\mathcal{L}$-algebra $\mathbb{F}^+ = (\mathbb{G}^+, [R_\Box], \langle R_\Diamond\rangle),$
	where $\mathbb{G}^+$ is the concept lattice of 
	$\mathbb{P}_{\mathbb{G}}$,
	and $[R_\Box]$ and $\langle R_\Diamond\rangle$ are unary operations on 
	$\mathbb{P}_{\mathbb{G}}^+$
	defined as follows: for every $c = (\val{c}, \descr{c}) \in 
	\mathbb{P}_{\mathbb{G}}^+$,
		\[[R_\Box]c: = (R_{\Box}^{[0]}[\descr{c}], (R_{\Box}^{[0]}[\descr{c}])^{[1]}) \quad \mbox{ and }\quad \langle R_\Diamond\rangle c: = ((R_{\Diamond}^{[0]}[\val{c}])^{[0]}, R_{\Diamond}^{[0]}[\val{c}]).\] 
		\vspace{-0.5cm}
\end{definition}

\begin{lemma}\label{equivalents of I-compatible} (\cite{CONRADIE2021371}, Lemma 2.6)
For every graph $(Z,E)$ and every relation $R\subseteq Z\times Z$,
	\begin{enumerate}
		\item the following are equivalent:
		\begin{enumerate}
			\item [(i)] $(R^{[0]}[y])^{[10]} \subseteq R^{[0]}[y]$  for every $y\in Z$;
			\item [(ii)]  $(R^{[0]}[Y])^{[10]} \subseteq R^{[0]}[Y]$ for every $Y\subseteq Z$;
			\item [(iii)] $R^{[1]}[B]=R^{[1]}[B^{[10]}]$ for every  $B\subseteq Z$;
		\end{enumerate}
		\item the following are equivalent:
		\begin{enumerate}
			\item [(i)] $(R^{[1]}[b])^{[01]} \subseteq R^{[1]}[b]$  for every $b\in Z$;
			\item [(ii)]  $(R^{[1]}[B])^{[01]} \subseteq R^{[1]}[B]$ for every $B\subseteq Z$;
			\item [(iii)] $R^{[0]}[Y]=R^{[0]}[Y^{[01]}]$ for every  $Y\subseteq Z$.
		\end{enumerate}
	\end{enumerate}
\end{lemma}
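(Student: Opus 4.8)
The plan is to read both pairs of maps as instances of the single construction governed by Lemma \ref{lemma: basic}. Unravelling the abbreviations of footnote \ref{footnote: abbreviations}, the $E$-based maps $(\cdot)^{[0]}, (\cdot)^{[1]}$ are exactly $(E^c)^{(0)}[\cdot]$ and $(E^c)^{(1)}[\cdot]$, while the $R$-based maps $R^{[0]}[\cdot]$ and $R^{[1]}[\cdot]$ are $(R^c)^{(0)}[\cdot]$ and $(R^c)^{(1)}[\cdot]$. Hence both pairs are antitone adjunctions to which every item of Lemma \ref{lemma: basic} applies (reading $T := E^c$ and $T := R^c$ respectively). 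In particular I would use: the antitonicity of each map (item 1), whence $(\cdot)^{[10]}$ and $R^{[0]}[R^{[1]}[\cdot]]$ are monotone; the adjunction $B\subseteq R^{[0]}[Y]$ iff $Y \subseteq R^{[1]}[B]$ (item 2); the expansiveness $W \subseteq W^{[10]}$ and $B \subseteq R^{[0]}[R^{[1]}[B]]$ (item 3); the identity $R^{[0]}[Y] = R^{[0]}[R^{[1]}[R^{[0]}[Y]]]$ (item 4); and the fact that $R^{[0]}[\cdot]$ turns unions into intersections (item 5).

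For part (1), the implication (i)$\Leftarrow$(ii) is immediate by taking $Y := \{y\}$. For (i)$\Rightarrow$(ii), I would write $R^{[0]}[Y] = \bigcap_{y\in Y}R^{[0]}[y]$ by item 5. Since $(\cdot)^{[10]}$ is monotone, for each $y_0 \in Y$ we get $(R^{[0]}[Y])^{[10]} \subseteq (R^{[0]}[y_0])^{[10]} \subseteq R^{[0]}[y_0]$, the last inclusion by (i); intersecting over $y_0 \in Y$ then yields $(R^{[0]}[Y])^{[10]} \subseteq R^{[0]}[Y]$ (the case $Y = \varnothing$ being trivial, as $Z^{[10]}\subseteq Z$).

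The heart of the argument is the equivalence (ii)$\Leftrightarrow$(iii), where the two adjunctions must be combined. For (ii)$\Rightarrow$(iii), the inclusion $R^{[1]}[B^{[10]}] \subseteq R^{[1]}[B]$ is free from $B \subseteq B^{[10]}$ and antitonicity; for the converse inclusion, the adjunction reduces $R^{[1]}[B] \subseteq R^{[1]}[B^{[10]}]$ to showing $B^{[10]} \subseteq R^{[0]}[R^{[1]}[B]]$. Now $B \subseteq R^{[0]}[R^{[1]}[B]]$ by item 3, and under (ii) the right-hand set, being of the form $R^{[0]}[Y]$, is $(\cdot)^{[10]}$-closed; applying the monotone map $(\cdot)^{[10]}$ therefore gives $B^{[10]} \subseteq (R^{[0]}[R^{[1]}[B]])^{[10]} = R^{[0]}[R^{[1]}[B]]$, as desired. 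For (iii)$\Rightarrow$(ii), I would instantiate (iii) at $B := R^{[0]}[Y]$ to obtain $R^{[1]}[R^{[0]}[Y]] = R^{[1]}[(R^{[0]}[Y])^{[10]}]$, apply $R^{[0]}[\cdot]$ to both sides, simplify the left-hand side to $R^{[0]}[Y]$ via item 4, and note that the right-hand side contains $(R^{[0]}[Y])^{[10]}$ by item 3; this yields $(R^{[0]}[Y])^{[10]} \subseteq R^{[0]}[Y]$.

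Finally, part (2) is proved by the same argument with the roles of $(\cdot)^{[0]}$ and $(\cdot)^{[1]}$ (and of $R^{[0]}[\cdot]$ and $R^{[1]}[\cdot]$) interchanged, so that $(\cdot)^{[01]}$ takes over as the relevant closure operator. I expect the main obstacle to be bookkeeping rather than conceptual: one must keep straight which of the four interior/closure compositions is in play at each step and check that every manipulation is an instance of the correct item of Lemma \ref{lemma: basic}. The one genuinely non-formal observation is that condition (ii) says precisely that every set of the form $R^{[0]}[Y]$ is $(\cdot)^{[10]}$-closed, which is exactly what makes the adjunction argument for (ii)$\Leftrightarrow$(iii) go through.
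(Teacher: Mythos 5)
Your proof is correct, and every step is indeed a legitimate instance of Lemma \ref{lemma: basic} applied to $T:=R^c$ or $T:=E^c$; in particular the adjunction reduction of $R^{[1]}[B]\subseteq R^{[1]}[B^{[10]}]$ to $B^{[10]}\subseteq R^{[0]}[R^{[1]}[B]]$ and the instantiation $B:=R^{[0]}[Y]$ for (iii)$\Rightarrow$(ii) are exactly the right moves. The paper itself gives no proof of this lemma (it is imported verbatim from \cite{CONRADIE2021371}), so there is nothing to compare against, but your Galois-connection argument is the standard one and can stand as a self-contained proof.
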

For any graph-based $\mathcal{L}$-frame $\mathbb{F}$, we let $R_{\Diamondblack}\subseteq Z\times Z$ be defined  by $x R_{\Diamondblack} a$ iff $aR_{\Box} x$, and $R_{\blacksquare}\subseteq Z\times Z$ by $a R_{\blacksquare} x$ iff $xR_{\Diamond} a$. Hence, for every $B, Y\subseteq Z$,
\begin{equation}
\label{eq:zero is 1 is zero}
R_{\Diamondblack}^{[0]}[B] = R_{\Box}^{[1]}[B] \quad R_{\Diamondblack}^{[1]}[Y] = R_{\Box}^{[0]}[Y] \quad R_{\blacksquare}^{[0]}[Y] = R_{\Diamond}^{[1]}[Y] \quad R_{\blacksquare}^{[1]}[B] = R_{\Diamond}^{[0]}[B].
\end{equation}
By Lemma \ref{equivalents of I-compatible}, the $E$-compatibility of $R_{\Box}$ and $ R_{\Diamond}$ guarantees that  the operations $[R_\Box], \langle R_\Diamond\rangle$ (as well as $[R_\blacksquare], \langle R_\Diamondblack\rangle$) are well defined on 
$\mathbb{G}^+$. 

 
\begin{lemma}[\cite{graph-based-wollic}, Lemma 5]
\label{lemma:existence adjoints}
Let \,  $\mathbb{F} = (\mathbb{G}, R_{\Box}, R_{\Diamond})$ be a graph-based $\mathcal{L}$-frame. Then  
	the algebra\, $\mathbb{F}^+ = (\mathbb{G}^+, [R_{\Box}], \langle R_{\Diamond}\rangle)$ is a complete  lattice expansion such that $ \langle R_\Diamondblack\rangle$ and $[R_\Box]$ (resp.~$\langle R_\Diamond\rangle$ and $[R_\blacksquare]$) form an adjoint pair.\footnote{For any posets $A$ and $B$, the maps $f:A \to B$ and $g: B \to A$ form an {\em adjoint pair} (notation: $f\dashv g$) if for every $a \in A$, and every $b\in B$, $f(a) \leq b \mbox{ iff } a \leq g(b)$. The map $f$ (resp.\ $g$) is the unique {\em left} (resp.\ {\em right}) adjoint of $g$ (resp.\ $f$). It is well known (cf.~\cite{davey2002introduction}) that in a complete lattice a map is completely join preserving (resp.\ meet preserving) iff it is a left (resp.\ right) adjoint of some map. }  Therefore,  $[R_\Box]$ (resp.~$ \langle R_\Diamond\rangle$) is completely meet-preserving (resp.~completely join-preserving). 
\end{lemma}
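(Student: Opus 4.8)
The plan is to reduce the whole statement to the two adjunction claims, since everything else follows by citation. First I would observe that $\mathbb{G}^+$ is a complete lattice (it is the concept lattice of $\mathbb{P_G}$) and that the four operations $[R_\Box], \langle R_\Diamond\rangle, \langle R_\Diamondblack\rangle, [R_\blacksquare]$ are genuine unary operations on $\mathbb{G}^+$, i.e.\ each sends concepts to concepts. Both facts are already on record above: the latter is exactly the consequence of $E$-compatibility obtained via Lemma \ref{equivalents of I-compatible} and noted right after \eqref{eq:zero is 1 is zero}. Hence $\mathbb{F}^+$ is a complete lattice expansion, and it remains only to verify the two adjunctions; the closing sentence about complete meet- and join-preservation is then immediate from the standard order-theoretic fact (recalled in the footnote on adjoint pairs) that, between complete lattices, a right adjoint preserves all meets and a left adjoint preserves all joins.

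For $\langle R_\Diamond\rangle \dashv [R_\blacksquare]$ I would fix concepts $c, d \in \mathbb{G}^+$ and simply unwind both inequalities into inclusions of subsets of $Z$. Using the order on the concept lattice (extent-inclusion, equivalently reverse intent-inclusion) together with the defining formulas, $\langle R_\Diamond\rangle c \leq d$ becomes $\descr{d} \subseteq \descr{\langle R_\Diamond\rangle c} = R_\Diamond^{[0]}[\val{c}]$, while $c \leq [R_\blacksquare] d$ becomes $\val{c} \subseteq \val{[R_\blacksquare]d} = R_\blacksquare^{[0]}[\descr{d}]$. Rewriting the latter via \eqref{eq:zero is 1 is zero} as $\val{c} \subseteq R_\Diamond^{[1]}[\descr{d}]$, the equivalence of the two inclusions is precisely an instance of the Galois-connection property Lemma \ref{lemma: basic}(2) in its square-bracket form (obtained by instantiating $T := R_\Diamond^c$), read with $U' := \descr{d}$ and $V' := \val{c}$.

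The adjunction $\langle R_\Diamondblack\rangle \dashv [R_\Box]$ is entirely parallel: unwinding gives that $\langle R_\Diamondblack\rangle c \leq d$ is $\descr{d} \subseteq \descr{\langle R_\Diamondblack\rangle c} = R_\Diamondblack^{[0]}[\val{c}] = R_\Box^{[1]}[\val{c}]$ (again using \eqref{eq:zero is 1 is zero}), whereas $c \leq [R_\Box] d$ is $\val{c} \subseteq \val{[R_\Box]d} = R_\Box^{[0]}[\descr{d}]$; these are equivalent by Lemma \ref{lemma: basic}(2), now applied with $T := R_\Box$, $U' := \val{c}$, and $V' := \descr{d}$. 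The two consequences about preservation of meets and joins then follow at once.

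I do not expect a genuine mathematical obstacle: the content is the adjunction, and it collapses onto Lemma \ref{lemma: basic}(2). The only real risk is bookkeeping, namely keeping straight which coordinate ($\val{\cdot}$ or $\descr{\cdot}$) each operator acts on, tracking the order-reversal hidden in ``$e \leq f$ iff $\descr{f} \subseteq \descr{e}$'', and instantiating Lemma \ref{lemma: basic}(2) with the variables in the correct roles (in particular swapping the roles of $U'$ and $V'$ between the two adjunctions). One should also be sure, before writing identities such as $\descr{\langle R_\Diamond\rangle c} = R_\Diamond^{[0]}[\val{c}]$, that each output pair really is a concept; but this is exactly the well-definedness already guaranteed by $E$-compatibility, so it can be cited rather than reproved.
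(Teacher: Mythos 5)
Your proposal is correct, and all the instantiations check out: both adjunctions do collapse onto the square-bracket form of the Galois-connection property in Lemma~\ref{lemma: basic}(2) via the identities in \eqref{eq:zero is 1 is zero}, with the well-definedness of the four operations supplied by $E$-compatibility as recorded after \eqref{eq:zero is 1 is zero}. Note that the paper itself gives no proof of this lemma --- it is imported verbatim from \cite{graph-based-wollic} --- but your reconstruction is the standard argument and uses only material already established in the preliminaries, so there is nothing to flag.
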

\begin{definition}
\label{def:fxf_xfx}
For every Kripke frame $\mathbb{X} = (S,R)$, the relational structure $\mathbb{F_\mathbb{X}} \coloneqq (\mathbb{G}_S,R_\Diamond,R_\Box)$, where $R_\Box = R_\Diamond = R$, is a graph-based frame, since any relation $R$ on $S$ is clearly $\Delta$-compatible (cf.~Definition \ref{def:graph:based:frame:and:model}). Conversely, for any graph-based frame $\mathbb{F} = ((Z, E), R_\Diamond, R_\Box)$ such that $E = \Delta$, and $R_\Diamond=R_\Box$, let $\mathbb{X_F} \coloneqq (Z, R_\Box)$ be its associated Kripke frame.
\end{definition}

For any Kripke frame $\mathbb{X}$, let $\mathbb{X^+}$ be its associated complex algebra. 
\begin{proposition} [\cite{CONRADIE2021371}, Proposition 3.7]
\label{prop:from Kripke frames to enriched polarities}
  If $\mathbb{X}$ is a Kripke frame, then  $\mathbb{F}_{\mathbb{X}}^+ \cong \mathbb{X}^+$.
\end{proposition}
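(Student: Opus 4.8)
The plan is to exhibit an explicit isomorphism and then check that it intertwines the modal operations. Since $\mathbb{F}_{\mathbb{X}}$ is built on the graph $\mathbb{G}_S = (S, \Delta)$, the first step is to identify the concept lattice $\mathbb{G}_S^+$, i.e.\ the concept lattice of the polarity $\mathbb{P}_{\mathbb{G}_S} = (S, S, I_{\Delta^c})$, where $a I_{\Delta^c} x$ iff $a \neq x$. For this polarity the Galois maps collapse to set-theoretic complementation: for any $B, Y \subseteq S$ one computes $B^\uparrow = \{x \mid \forall a(a \in B \Rightarrow a \neq x)\} = B^c$ and dually $Y^\downarrow = Y^c$ (equivalently, by the footnote to Definition \ref{def:graph:based:frame:and:model} and Lemma \ref{lemma:round and square brackets}, $B^{[1]} = B^c$ and $Y^{[0]} = Y^c$, since $E^c$ is here the inequality relation). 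Hence the formal concepts of $\mathbb{P}_{\mathbb{G}_S}$ are exactly the pairs $(B, B^c)$ with $B \subseteq S$, and the map $h : \mathcal{P}(S) \to \mathbb{G}_S^+$ given by $h(B) = (B, B^c)$ is a bijection. As the concept order is inclusion of the first coordinate, $h$ is an order-isomorphism, hence a lattice isomorphism between the lattice reduct $\mathcal{P}(S)$ of $\mathbb{X}^+$ and $\mathbb{G}_S^+$.

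Next I would verify that $h$ commutes with the modal operations, using $R_\Box = R_\Diamond = R$ together with the fact that the $E$-relative operators $(\cdot)^{[0]}$ and $(\cdot)^{[1]}$ are complementation. For the box, take $c = h(B) = (B, B^c)$; then by Definition \ref{def:graph:based:frame:and:model}, $\val{[R_\Box]c} = R^{[0]}[B^c] = \{u \mid \forall v(v \notin B \Rightarrow \neg(uRv))\} = \{u \mid R[u] \subseteq B\} = [R]B$, so $[R_\Box]h(B) = ([R]B, ([R]B)^c) = h([R]B)$. For the diamond, $R^{[0]}[B] = \{u \mid \forall v(v \in B \Rightarrow \neg(uRv))\} = (\langle R\rangle B)^c$, whence $\val{\langle R_\Diamond\rangle c} = (R^{[0]}[B])^{[0]} = ((\langle R\rangle B)^c)^c = \langle R\rangle B$, i.e.\ $\langle R_\Diamond\rangle h(B) = h(\langle R\rangle B)$. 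These coincide with the Kripke operators $[R]W = (R^{-1}[W^c])^c$ and $\langle R\rangle W = R^{-1}[W]$ of $\mathbb{X}^+$.

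Combining the two steps, $h$ is an isomorphism of $\mathcal{L}$-algebras, which proves $\mathbb{F}_{\mathbb{X}}^+ \cong \mathbb{X}^+$. I expect the only delicate point to be the bookkeeping between the two layers of bracket notation: distinguishing the $E$-relative operators $(\cdot)^{[0]}, (\cdot)^{[1]}$ (which reduce to complementation precisely because $E = \Delta$) from the $R$-relative operators $R^{[0]}[\cdot]$ appearing in the definition of the graph-based modalities, and checking that the complementations cancel correctly so that the graph-based $[R_\Box]$ and $\langle R_\Diamond\rangle$ recover exactly the Kripke $[R]$ and $\langle R\rangle$. Everything else is a routine unwinding of the definitions.
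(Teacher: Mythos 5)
Your proof is correct and is essentially the standard argument: the paper itself imports this proposition from \cite{CONRADIE2021371} without reproving it, and the argument given there likewise identifies the concepts of $(S,S,I_{\Delta^c})$ with pairs $(B,B^c)$ and checks that the graph-based modalities reduce to the Kripke ones. Your bookkeeping of the two bracket notations ($E$-relative versus $R$-relative) is accurate, so no further changes are needed.
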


\begin{proposition}
\label{prop:fxf_xfx}
For any graph-based frame $\mathbb{F}$, and any Kripke frame $\mathbb{X}$, 
\smallskip

{{\centering
$\mathbb{F_{X_F}} \cong \mathbb{F}$ \quad\quad and \quad\quad  $\mathbb{X_{F_X}} \cong \mathbb{X}$.
\par}}
\end{proposition}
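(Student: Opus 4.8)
The plan is to unwind the two constructions of Definition \ref{def:fxf_xfx} and verify that composing them, in either order, returns the input structure verbatim, so that in each case the identity map on the underlying set is the required isomorphism. Since neither construction ever introduces a relabelling of states, I expect both claimed isomorphisms to be literal equalities; the only point requiring care is to correctly delimit the domain of the first claim, since $\mathbb{X_F}$ is defined in Definition \ref{def:fxf_xfx} only for those graph-based frames $\mathbb{F} = ((Z,E), R_\Diamond, R_\Box)$ with $E = \Delta$ and $R_\Diamond = R_\Box$.

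First I would treat $\mathbb{X_{F_X}} \cong \mathbb{X}$. Given a Kripke frame $\mathbb{X} = (S, R)$, the construction yields $\mathbb{F_X} = (\mathbb{G}_S, R, R)$ with $\mathbb{G}_S = (S, \Delta)$; this is indeed a graph-based frame, the required $\Delta$-compatibility of $R$ being automatic as already observed in Definition \ref{def:fxf_xfx}. Because $\mathbb{F_X}$ has graph relation $\Delta$ and equal modal relations, the construction $\mathbb{X}_{(\cdot)}$ applies to it and returns $\mathbb{X_{F_X}} = (S, R) = \mathbb{X}$. Hence the two Kripke frames coincide on the nose and the identity on $S$ is the witnessing isomorphism.

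Next I would treat $\mathbb{F_{X_F}} \cong \mathbb{F}$. Here the domain is, as noted above, the class of graph-based frames of the form $\mathbb{F} = ((Z, \Delta), R, R)$ for $R := R_\Box = R_\Diamond$. The construction gives $\mathbb{X_F} = (Z, R)$, and feeding this Kripke frame back through $\mathbb{F}_{(\cdot)}$ produces $\mathbb{F_{X_F}} = (\mathbb{G}_Z, R, R) = ((Z, \Delta), R, R) = \mathbb{F}$. Again the two graph-based frames are identical, so the identity on $Z$ witnesses the isomorphism.

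The statement therefore carries no real obstacle: it is a bookkeeping check that the passages $\mathbb{X} \mapsto \mathbb{F_X}$ and $\mathbb{F} \mapsto \mathbb{X_F}$ are mutually inverse on the relevant restricted classes. The only genuinely substantive fact in play---that every relation on a set is $\Delta$-compatible, so that $\mathbb{F_X}$ is always a legitimate graph-based frame---is imported directly from Definition \ref{def:fxf_xfx}, and no appeal to the concept-lattice or complex-algebra machinery (as used for the companion Proposition \ref{prop:from Kripke frames to enriched polarities}) is needed.
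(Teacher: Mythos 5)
Your proof is correct and matches the paper's (implicit) argument: the paper states Proposition~\ref{prop:fxf_xfx} without proof precisely because, as you verify, the two constructions of Definition~\ref{def:fxf_xfx} are mutually inverse on the nose, yielding literal equalities rather than mere isomorphisms. Your observation that the first claim only ranges over graph-based frames with $E = \Delta$ and $R_\Diamond = R_\Box$ (the only ones for which $\mathbb{X_F}$ is defined) is a worthwhile clarification of the statement's scope.
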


\begin{definition} [\cite{graph-based-wollic}, Definition 4]
	A {\em graph-based} $\mathcal{L}$-{\em model} is a tuple $\mathbb{M} = (\mathbb{F}, V)$ where $\mathbb{F}=(\mathbb{G},R_\Diamond,R_\Box)$ is a graph-based $\mathcal{L}$-frame and $V: \mathsf{Prop}\to \mathbb{F}^+$. Since $V(p)$ is a formal concept (cf.~Section \ref{ssec:logics})  of  $\mathbb{P}_\mathbb{G}$, we write $V(p) = (\val{p}, \descr{p})$.
	For every such $\mathbb{M}$,  the valuation $V$ can be extended compositionally to all $\mathcal{L}$-formulas as follows:
	\vspace{-2mm}
{\small{
		\begin{center}
			\begin{tabular}{r c l c r cl}
				$V(p)$ & $ = $ & $(\val{p}, \descr{p})$\\
				$V(\top)$ & $ = $ & $(Z, \varnothing)$ &\quad& $V(\bot)$ & $ = $ & $(\varnothing, Z)$\\
				$V(\phi\wedge\psi)$ & $ = $ & $(\val{\phi}\cap \val{\psi}, (\val{\phi}\cap \val{\psi})^{[1]})$ &&
				$V(\phi\vee\psi)$ & $ = $ & $((\descr{\phi}\cap \descr{\psi})^{[0]}, \descr{\phi}\cap \descr{\psi})$\\
				$V(\Box\phi)$ & $ = $ & $(R_{\Box}^{[0]}[\descr{\phi}], (R_{\Box}^{[0]}[\descr{\phi}])^{[1]})$ &&
				$V(\Diamond\phi)$ & $ = $ & $((R_{\Diamond}^{[0]}[\val{\phi}])^{[0]}, R_{\Diamond}^{[0]}[\val{\phi}]).$\\
			\end{tabular}
		\end{center}
}}
Moreover, the existence of the adjoints of $[R_{\Box}]$ and $\langle R_\Diamond \rangle$ (cf.~Lemma \ref{lemma:existence adjoints}) supports the interpretation of the  language $\mathcal{L}^*$, which is an expansion of $\mathcal{L}$ with the  connectives $\blacksquare$ and $\Diamondblack$ interpreted as follows:
{\small{
		\begin{center}
			\begin{tabular}{r c l c r c l}
				$V(\blacksquare\phi)$ & $ = $ & $(R_{\blacksquare}^{[0]}[\descr{\phi}], (R_{\blacksquare}^{[0]}[\descr{\phi}])^{[1]})$&$\quad$&
				$V(\Diamondblack\phi)$ & $ = $ & $((R_{\Diamondblack}^{[0]}[\val{\phi}])^{[0]}, R_{\Diamondblack}^{[0]}[\val{\phi}])$\\
				%
			\end{tabular}
\end{center}}}
\end{definition}
Spelling out the definition above as is done in \cite{conradie2015relational}, we can define the satisfaction and refutation relations $\mathbb{M}, \mathbf{z} \Vdash \phi$ and $\mathbb{M}, \mathbf{z} \succ \phi$
for every graph-based $\mathcal{L}$-model $\mathbb{M} = (\mathbb{F}, V)$, $\mathbf{z} \in Z$, and any $\mathcal{L}$-formula $\phi$,  by the following simultaneous recursion:
\smallskip

{{\centering
			\begin{tabular}{lllllll}
				$\mathbb{M}, \mathbf{z} \Vdash \bot$ &&never & \quad\quad\quad &$\mathbb{M}, \mathbf{z} \succ \bot$ && always\\
				$\mathbb{M}, \mathbf{z} \Vdash \top$ &&always & &$\mathbb{M}, \mathbf{z} \succ \top$ &&never\\
				$\mathbb{M}, \mathbf{z} \Vdash p$ & iff & $\mathbf{z}\in \val{p}$ & &$\mathbb{M}, \mathbf{z} \succ p$ & iff & $\forall \mathbf{z}'[\mathbf{z}'E\mathbf{z} \Rightarrow \mathbf{z}' \not\Vdash p]$\\
				%
				%
				%
				$\mathbb{M}, \mathbf{z} \succ \phi \vee \psi$ &iff &$\mathbb{M},\mathbf{z}\succ \phi \text{ and } \mathbb{M},\mathbf{z} \succ \psi$%
				& &$\mathbb{M}, \mathbf{z} \Vdash \phi \vee \psi$ &iff &$\forall \mathbf{z}' [\mathbf{z} E\mathbf{z}'   \Rightarrow \mathbb{M},\mathbf{z}' \not\succ \phi \vee \psi]$\\
				$\mathbb{M}, \mathbf{z} \Vdash \phi \wedge \psi$ &iff &$\mathbb{M},\mathbf{z}\Vdash \phi \text{ and } \mathbb{M},\mathbf{z} \Vdash \psi$%
				& &$\mathbb{M},\mathbf{z} \succ \phi \wedge\psi$ &iff &$\forall \mathbf{z}'[\mathbf{z}' E\mathbf{z} \Rightarrow \mathbb{M},\mathbf{z}'\not\Vdash \phi \wedge \psi]$\\
				$\mathbb{M}, \mathbf{z} \succ \Diamond\phi$ &iff &$\forall \mathbf{z}' [\mathbf{z} R_{\Diamond}\mathbf{z}'  \Rightarrow \mathbb{M},\mathbf{z}' \not \Vdash \phi]$%
				& &$\mathbb{M}, \mathbf{z} \Vdash \Diamond\phi$ &iff &$\forall \mathbf{z}'[\mathbf{z} E\mathbf{z}' \Rightarrow \mathbb{M},\mathbf{z}' \not\succ \Diamond \phi]$\\
				$\mathbb{M}, \mathbf{z} \Vdash \Box \psi$ &iff &$\forall \mathbf{z}' [\mathbf{z} R_{\Box}\mathbf{z}'  \Rightarrow \mathbb{M},\mathbf{z}' \not\succ \psi]$%
				& &$\mathbb{M}, \mathbf{z} \succ \Box\psi$ &iff &$\forall \mathbf{z}' [\mathbf{z}' E\mathbf{z}  \Rightarrow \mathbb{M},\mathbf{z}' \not\Vdash \Box \psi]$\\
				$\mathbb{M}, \mathbf{z} \succ \Diamondblack\phi$ &iff &$\forall \mathbf{z}' [\mathbf{z} R_{\Diamondblack}\mathbf{z}'  \Rightarrow \mathbb{M},\mathbf{z}' \not \Vdash \phi]$%
				& &$\mathbb{M}, \mathbf{z} \Vdash \Diamondblack\phi$ &iff &$\forall \mathbf{z}'[\mathbf{z} E\mathbf{z}' \Rightarrow \mathbb{M},\mathbf{z}' \not\succ \Diamondblack \phi]$\\
				$\mathbb{M}, \mathbf{z} \Vdash \blacksquare \psi$ &iff &$\forall \mathbf{z}' [\mathbf{z} R_{\blacksquare}\mathbf{z}'  \Rightarrow \mathbb{M},\mathbf{z}' \not\succ \psi]$%
				& &$\mathbb{M}, \mathbf{z} \succ \blacksquare\psi$ &iff &$\forall \mathbf{z}' [\mathbf{z}' E\mathbf{z}  \Rightarrow \mathbb{M},\mathbf{z}' \not\Vdash \blacksquare \psi]$\\
				%
				%
			\end{tabular}
			\par
}}
\smallskip

Note that the satisfaction of disjuction formulas at a state  is defined as  the non-refutation of the disjuncts at all its $E$-successors, which is different from the satisfaction clause in distributive logics. Also, unlike  distributive modal logic, the satisfiability clauses for all  modal operators including the diamonds are given by universal quantifiers. 

An $\mathcal{L}$-sequent $\phi \vdash \psi$ is {\em true} in $\mathbb{M}$, denoted $\mathbb{M} \models \phi \vdash \psi$, if for all $\mathbf{z}, \mathbf{z}' \in Z$, if $\mathbb{M}, \mathbf{z} \Vdash \phi$ and $\mathbb{M}, \mathbf{z}' \succ \psi$ then $\mathbf{z} E^c \mathbf{z}'$. An $\mathcal{L}$-sequent $\phi \vdash \psi$ is {\em valid} in $\mathbb{F}$, denoted $\mathbb{F} \models \phi \vdash \psi$, if it is true in every model based on $\mathbb{F}$.

The next lemma follows immediately from the definition of an $\mathcal{L}$-sequent being true in a graph-based $\mathcal{L}$-model. 
\begin{lemma} [\cite{graph-based-wollic}, Lemma 7]
	\label{lemma:for:completeness}
	For any graph-based $\mathcal{L}$-frame $\mathbb{F}$  and any $\mathcal{L}$-sequent $\phi\vdash \psi$,
 \begin{center}$\mathbb{F}\models \phi\vdash \psi$ \, iff \,  $\mathbb{F}^+ \models \phi\vdash \psi$. \end{center}
\end{lemma}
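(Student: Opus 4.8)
The plan is to factor the claimed frame/algebra equivalence through an intermediate \emph{satisfaction--refutation lemma} relating the relational forcing relations $\Vdash$ and $\succ$ to the two components $\val{\cdot}$ and $\descr{\cdot}$ of the concept-valued valuation, and then to translate the relational definition of truth of a sequent into the order-inequality $V(\phi)\le V(\psi)$ in $\mathbb{F}^+$ by means of the Galois connection of $\mathbb{P}_\mathbb{G}$. Once the semantics at a single model is pinned down in this way, quantifying over all valuations immediately yields the frame-level statement.

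First I would prove, by simultaneous induction on $\phi$, that for every graph-based $\mathcal{L}$-model $\mathbb{M}=(\mathbb{F},V)$ and every $\mathbf{z}\in Z$,
\[
\mathbb{M},\mathbf{z}\Vdash\phi \iff \mathbf{z}\in\val{\phi}
\qquad\text{and}\qquad
\mathbb{M},\mathbf{z}\succ\phi \iff \mathbf{z}\in\descr{\phi}.
\]
The cases $\bot,\top$ are read off directly from $V(\bot)=(\varnothing,Z)$ and $V(\top)=(Z,\varnothing)$, while the case $p$ uses that $V(p)$ is a formal concept, so the refutation clause unfolds as $\descr{p}=(\val{p})^{[1]}=I^{(1)}_{E^c}[\val{p}]$ via the footnote identities. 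In each inductive step the crucial observation is that the $\succ$-clause of a compound formula refers only to the $\Vdash$/$\succ$-clauses of its immediate subformulas, whereas its $\Vdash$-clause is then stated through the $\succ$-clause of the \emph{same} formula; accordingly, within a step I would first establish the component that depends on subformulas and only afterwards the one that depends on it. Unwinding the clauses with \eqref{eq:def round square brackets} and the abbreviations $Y^{[0]}=E^{[0]}[Y]$, $B^{[1]}=E^{[1]}[B]$, the propositional cases reduce to $\descr{\phi\vee\psi}=\descr{\phi}\cap\descr{\psi}$ and $\val{\phi\vee\psi}=(\descr{\phi}\cap\descr{\psi})^{[0]}$ (dually for $\wedge$), and the modal cases to $\descr{\Diamond\phi}=R_{\Diamond}^{[0]}[\val{\phi}]$, $\val{\Diamond\phi}=(R_{\Diamond}^{[0]}[\val{\phi}])^{[0]}$ and their $\Box$-duals; here Lemma \ref{lemma:round and square brackets} and the $E$-compatibility of $R_{\Diamond},R_{\Box}$ ensure that these are exactly the components prescribed by the compositional definition of $V$.

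Next I would fix a model and translate the truth condition. By definition, $\mathbb{M}\models\phi\vdash\psi$ asserts that $\mathbf{z}\in\val{\phi}$ and $\mathbf{z}'\in\descr{\psi}$ imply $\mathbf{z}\,E^c\,\mathbf{z}'$, i.e.\ $\val{\phi}\subseteq I^{(0)}_{E^c}[\descr{\psi}]$. By Lemma \ref{lemma:round and square brackets}, $I^{(0)}_{E^c}[\descr{\psi}]=(\descr{\psi})^{[0]}$, and since $V(\psi)$ is a formal concept this equals $\val{\psi}$; hence the condition is precisely $\val{\phi}\subseteq\val{\psi}$, which is $V(\phi)\le V(\psi)$ in $\mathbb{F}^+$. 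As the compositional extension of $V$ in $\mathbb{M}$ is literally the interpretation of $\phi$ in $\mathbb{F}^+$ under the assignment $V$, and valuations $V:\mathsf{Prop}\to\mathbb{F}^+$ coincide with assignments over $\mathbb{F}^+$, quantifying over all $V$ gives $\mathbb{F}\models\phi\vdash\psi$ iff $\mathbb{F}^+\models\phi\vdash\psi$.

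I expect the main obstacle to be the bookkeeping of the simultaneous induction: one must keep straight the alternation between $\Vdash$ and $\succ$ (in particular that the $\Vdash$-clauses of disjunctions and of modal formulas are \emph{$E$-mediated}, invoking the closure operator $(\cdot)^{[0]}$ or $(\cdot)^{[1]}$ applied to the set already computed for $\succ$), and verify at each stage that the sets produced are genuinely the value-sets of formal concepts, which is exactly where $E$-compatibility is used. The remaining Galois-connection translation is then a short, purely order-theoretic computation.
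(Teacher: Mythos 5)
Your proposal is correct and follows essentially the route the paper intends: the paper derives the relational $\Vdash$/$\succ$ clauses by ``spelling out'' the compositional definition of $V$, so it treats your inductive satisfaction--refutation lemma as definitional and reduces the whole statement to the observation that $\mathbb{M}\models\phi\vdash\psi$ unfolds to $\val{\phi}\subseteq(\descr{\psi})^{[0]}=\val{\psi}$, i.e.\ $V(\phi)\le V(\psi)$, followed by quantification over valuations. Your extra induction (with the correct ordering of the two components within each step) is a legitimate, more explicit verification of the same fact, not a different approach.
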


\subsection{Inductive modal reduction principles and their ALBA-outputs}
\label{ssec:mrp}
A {\em modal reduction principle} (mrp) of the basic language of non-distributive modal logic $\mathcal{L}$ is an inequality $s(p)\leq t(p)$ such that both $s$ and $t$ are built up using only $\Box$ and $\Diamond$.
The term $s(p)$ (resp.\ $t(p)$) is {\em good} if it is a (possibly empty) sequence of diamonds (resp.\ boxes) followed by a (possibly empty) sequence of boxes (resp.\ diamonds).  A modal reduction principle is {\em inductive} iff it is Sahlqvist iff either $s(p)$ or $t(p)$ is good, and is {\em analytic inductive} if both are good.
So, inductive modal reduction principles are of one of the following types: 
\[
\text{(a)} \ \ \ \phi [\alpha (p)/!y]\leq \psi [\chi (p)/!x] \quad \quad \quad \text{(b)} \ \ \ \phi [\zeta (p)/!y]\leq \psi [\delta (p)/!x],
\] 
where $\Box$ (resp.\ $\Diamond$) is the only connective allowed in $\alpha(p)$ and $\psi(!x)$ (resp.\ $\varphi(!y)$ and $\delta(p)$), and $\chi$ (resp.~$\zeta$)   has $\Diamond$ (resp.~$\Box$) as principal connective whenever $\chi(p)\neq p$ (resp.~$\zeta(p)\neq p$).
{\em Analytic inductive}  mrps are of the form $\phi [\alpha (p)/!y]\leq \psi [\delta (p)/!x]$, where $\psi (!x)$, $\phi(!y)$, $\delta(p)$ and $\alpha (p)$ are as above, and thus they are of both types (a) and (b).
Prime examples of analytic inductive mrps feature prominently in several proposed modal axiomatizations of generalized approximation spaces (cf.~Sections \ref{ssec:approx spaces} and \ref{ssec:algebraic review}).
The ALBA outputs of inductive mrps  are the following  inequalities  in the expanded language $\mathcal{L}^+$ (cf.\ \cite{CoPa:non-dist}):\footnote{
The language $\mathcal{L}^+$ expands $\mathcal{L}^*$ with two denumerable sets of sorted variables $\mathsf{NOM} \ni \nomi, \nomj$ and $\mathsf{CONOM} \ni \cnomm, \cnomn$. The reader is referred to \cite{conradie2016constructive,CoPa:non-dist} for a general presentation of ALBA.
} 
\begin{center}
\begin{tabular}{rl}
type (a) & $\quad\forall \nomj[\mathsf{LA}(\psi)[\phi[\nomj/!y]/!u] \leq \chi[\mathsf{LA}(\alpha)[\nomj/!u]/p]]$, \\
type (b) & $\quad\forall \cnomm[\zeta[\mathsf{RA}(\delta)[\cnomm/!u]/p] \leq \mathsf{RA}(\phi)[\psi[\cnomm/!y]/!u]$,
\end{tabular}
\end{center}
where, when $\theta$ is a sequence of $\Box$ (resp.\ $\Diamond$) connectives of length $n$, $\mathsf{LA}(\theta)(!u)$ (resp.\ $\mathsf{RA}(\theta)(!u)$) is a sequence of $\Diamondblack$ (resp.\  $\blacksquare$) connectives of length $n$ ending with a (single) occurrence of the variable $u$.

\begin{example}
The mrp $\Diamond\Diamond\Box\Box p \leq \Box\Diamond\Box p$ is inductive of type (a), but not analytic, since it instantiates type (a) above as follows: $\varphi(y) \coloneqq \Diamond\Diamond y$, $\alpha(p) \coloneqq \Box\Box p$, $\psi(x) \coloneqq \Box x$, $\chi(p) \coloneqq \Diamond\Box p$.
Thus, $\mathsf{LA}(\psi)(u) = \Diamondblack u$, $\mathsf{LA}(\alpha)(u) = \Diamondblack\Diamondblack u$; therefore, its ALBA output is
\smallskip

{{\centering
\begin{tabular}{rl}
& $\forall\nomj \; [(\Diamondblack u) [(\Diamond\Diamond y)[\nomj / y] /u]  \leq (\Diamond\Box p)[ (\Diamondblack\Diamondblack u)[\nomj / u] / p]]$, \\
i.e. & $\forall\nomj \; [(\Diamondblack u) [(\Diamond\Diamond \nomj) /u]  \leq (\Diamond\Box p)[ (\Diamondblack\Diamondblack \nomj) / p]]$, \\
i.e. & $\forall\nomj \; [\Diamondblack \Diamond\Diamond \nomj \leq \Diamond\Box \Diamondblack\Diamondblack \nomj]$.
\end{tabular}
\par}}
\smallskip

The mrp $\Diamond\Box\Box p \leq \Box\Box\Box\Diamond\Diamond\Diamond\Diamond p$ is analytic inductive, as it instantiates the analytic shape above as follows:
$\varphi(y) \coloneqq \Diamond y$, $\alpha(p) \coloneqq \Box\Box p$, $\psi(y) \coloneqq \Box\Box\Box y$, $\delta(p) \coloneqq \Diamond\Diamond\Diamond\Diamond p$. Thus, $\mathsf{RA}(\delta)(u) = \blacksquare\blacksquare\blacksquare\blacksquare u$, and $\mathsf{RA}(\varphi)(u) \coloneqq \blacksquare u$.
Since this mrp is analytic, it can be solved both as a type (a) and as a type (b); solving it as type (b) yields
\smallskip

{{\centering
\begin{tabular}{rl}
& $\forall\cnomm \; [(\Box\Box y)[(\blacksquare\blacksquare\blacksquare\blacksquare u)[\cnomm / u] / p] \leq (\blacksquare u) [(\Box\Box\Box y)[\cnomm / y] /u] ]$, \\
i.e. & $\forall\cnomm \; [(\Box\Box y)[(\blacksquare\blacksquare\blacksquare\blacksquare \cnomm) / p] \leq (\blacksquare u) [(\Box\Box\Box \cnomm) /u] ]$, \\
i.e. & $\forall\cnomm \; [\Box\Box\blacksquare\blacksquare\blacksquare\blacksquare \cnomm\leq \blacksquare \Box\Box\Box \cnomm]$. \\

\end{tabular}
\par}}
\end{example}

\section{  Two case studies on informational entropy}
\label{ssec:twocases studies}

In the present section, we discuss two examples in which  informational entropy is relevant to the analysis, and is explicitly accounted for in the environment of graph-based frames.

\subsection{Social media news network}
Let the reflexive graph $(Z, E)$ represent a social media news network, where $Z$ is a set of accounts, and  $z E z'$ iff  account $z$ is a source of news for account $z'$, that is, $z'$ follows the news posted by $z$. Under this interpretation, the reflexivity  of $E$ says that every account is  a source of information for itself. 
For any news (fact) $p$, an account $z$ refutes $p$ iff none of its $E$-predecessors post (i.e.~support)  $p$. That is, $z$ refutes $p$ iff none of its information sources support $p$.
We interpret $z \succ p$ as account `$z$ refutes news $p$' (that is, $z$ makes a post denying $p$) and  $z \Vdash  p$ as account `$z$ confirms (posts) news $p$'. Formally, for any $z \in Z$, and any proposition $p$,

\[
z \succ p \quad \text{iff}\quad \text{for all }  z', \text{ if }  z' E z \, \text{ then } \, z' \not\Vdash p.
\]
 On the other hand,   an account $z$ posts news $p$ iff 
 none of the followers of $z$ refutes $p$.\footnote{Recall that, since $E$ is reflexive, $z$ is among its followers, and hence not refuting $p$ is a (clearly) necessary, but in general {\em not sufficient} condition for $z$ to post (support) $p$.} Formally, for  any $z \in Z$, and any proposition $p$, 
\[
z \Vdash p \quad \text{iff}\quad \text{for all $z'$}, \text{ if } z E z' \, \text{ then } \, z' \not\succ p.
\] 
This clause reflects the common  practice, in the management of   social media accounts, of not posting any news item which one's own audience would find controversial, on pain of loss of followers,  conflicts, backlashes or embarrassments \cite{powers2019shouting,doi:10.1080/1369118X.2013.862563,vraga2015individual}. Seen from  the followers' perspective, if one of the accounts they follow (confirm) post a news item $p$, then they either decide to no longer refute it or they stop following this account  (such behaviour is especially common in more politically active users\cite{10.1007/978-3-319-50901-3_30,rainie2012social}). Thus, the satisfaction clause above reflects  the equilibrium to which these dynamics converge. 

Note that, as $E$ is reflexive, if it is the case that $ z E z' \implies z' \not\succ p$, for all $z'$, then $z \not \succ p$. By the defining condition of refutation, the latter implies  an account $z'' \in Z$ exists such that $z'' E z$ and $z'' \Vdash p$. Thus, the compatibility conditions between relations $\Vdash$ and $\succ$ in graph-based modal logic is naturally justified in this interpretation. Notice that, in general, accounts followed by an account $z$ need not see news posted by $z$. That is, accounts which are sources of news for $z$ need not follow $z$ for the news. Thus, in general, it is possible that there exist accounts $z, z' \in Z $ such that $z E z'$, $z \succ p$ and $z' \Vdash p$. 
Note that relation $E$ encodes inherent indiscernibility of the model in the sense that the accounts which have the same sources of information (accounts they follows) will refute exactly same set of news items,  while accounts with the same set of followers will post the same set of news items. Thus, they are indiscernible from each other with regards to their behaviour on social media. This phenomenon is often observed on social media in the formation on communities which are so-called {\em echo-chambers}. Thus, we can see a concept $(\val{\phi}, \descr{\phi})$ as an echo-chamber (community) with news sources $\val{\phi}$ and followers $\descr{\phi}$. 

Let $R_\Diamond \supseteq E^{-1}$ be a relation such that for any $z,z' \in Z$, $z R_\Diamond z'$ iff $z$ can possibly hear news from $z'$. Recall that the refutation clause for $\Diamond$ (see Section \ref{ssec:logics}) is 
\[
z \succ \Diamond p \quad \text{iff} \quad \text{for all $z'$, } \; z R_\Diamond z' \; \text{implies} \; z' \not\Vdash p.
\]
This can be interpreted as follows. In some circumstances for some  news $p$, a social media account may want to be very sure before refuting it. In this case $z$ may check the posts from all the   news sources they can possibly access, and not only the accounts they follow, and only refute if none of the possible sources confirm (post about) $p$. That is, $z \succ \Diamond p $, iff none of the possible news sources for $z$ confirm (post) $p$.  

Let $R_\Box \supseteq E$ be a relation such that for any $z,z' \in Z$, $z R_\Box z'$ iff $z'$ can possibly get the news posted by  $z$. By the definition of refutation clause for $\Box$ (See Section \ref{ssec:logics}), is given by 
\[
z \Vdash \Box p \quad \text{iff} \quad \text{for all $z'$} \, z R_\Box z' \, \text{implies} \, z' \not\succ p.
\]
This can be interpreted as follows. In some circumstances for some  news $p$, a social media account may want to be sure that none of the accounts who are possibly  reached by them  refute it. In this case $z$ may confirm (post) news $p$ if none of the accounts who can possibly be reached by a post from $z$ refute $p$.  That is, $z \Vdash \Box p $, iff none of the possible recipients of news  from  $z$ refute $p$.  The relations $R_\Diamond$ and $R_\Box$ can be seen as the upper approximations of relations $E^{-1}$ and $E$. Thus, the accounts which have the same sets of accounts related to them by $R_\Diamond$, that is accounts who see posts from same set of accounts (even for important news) can be seen as a core set of followers of these sources (core of an echo-chamber), on the other hand,  the set of news sources with the same follower accounts related by $R_\Box$ are the accounts whose posts are always seen by the same set of accounts. These can be seen as the core news sources of an echo-chamber. Thus, for an echo-chamber (community) described by concept $(\val{\phi}, \descr{\phi})$, the sets $\val{\Box{\phi}}$  and  $\descr{\Diamond{\phi}}$ can be interpreted as  the core followers and core news sources of this echo-chamber respectively. The conditions $E \subseteq R_\Box$ and $E^{-1}  \subseteq R_\Diamond $, imply that $\val{\Box{\phi}} \subseteq \val{\phi}$  and  $\descr{\Diamond{\phi}} \subseteq \descr{\phi}$ (and hence that $\descr{\phi} \subseteq \descr{\Box{\phi}}$  and  $\val{\phi} \subseteq \val{\Diamond{\phi}}$) which can be interpreted as saying the set of core followers and core news sources of an echo-chamber are subsets of the sets of all followers and news sources in this echo-chamber respectively. 

\subsection{Rashomon}

Akira Kurosawa's acclaimed 1950 film {\em Rashomon} is set in medieval Japan and depicts the death by cutlass of a samurai through the widely differing and mutually conflicting accounts of the characters mainly involved: the bandit ($b$), the samurai's wife ($\ell$), and the samurai himself ($s$) speaking through a medium, as well as the account of a witness, the woodcutter ($w$). Both in popular culture and in academic literature, {\em Rashomon}'s story has been regarded as the paradigmatic illustration of the subjectivity of truth and the impossibility of reaching  factual accuracy, and the expression {\em Rashomon effect} has been coined to refer to the effect of observers' subjective perception on their recollection of an event causing them to produce widely divergent but equally plausible accounts of it.

Each main character testifies to be the one who killed $s$, and rather than  self-exculpation,
the main intent of the characters testimony (including $w$'s testimony) is to portray an image of themselves which is aligned with their own social role, and which embodies its virtues. Hence, in each testimony, the witnesses arrange their facts (highlighting some of them, ignoring or excluding or misrepresenting others) according to their own `moral agenda', and it is precisely the compatibility between their various `moral agendas' that we represent in the reflexive graph below (where all reflexive arrows are omitted), rather than the {\em factual} compatibility among the various testimonies: 
\begin{center}
\begin{tikzpicture}

\draw[very thick, ->] (0, 0) -- (-1.2, 0) ;
\draw[very thick, ->] (0, 0) -- (0, 1.2) ;
	\draw[very thick, ->] (0, 0) -- ( 1.2, 0);

\filldraw[black] (0,0) circle (2 pt);
	\filldraw[black] (-1.3,0) circle (2 pt);
	\filldraw[black] (1.3, 0) circle (2 pt);
    \filldraw[black] (0, 1.3) circle (2 pt);
	
		\draw (0, 1.6) node {$z_b$};

	\draw (-1.6, 0) node {{\small{$z_\ell$}}};

    \draw (0.20, -0.2) node {{\small{$z_w$}}};

    \draw (1.6, 0) node {{\small{$z_s$}}};

\end{tikzpicture}
\end{center}
In the graph $(Z, E)$ represented in the picture above, each state in the domain $Z: = \{z_b, z_\ell, z_s, z_w\}$ represents one testimony,  and the intended interpretation of edges in $E \subseteq Z\times Z$ is the  following:
\smallskip

{{\centering
$z_xEz_y$ \quad iff \quad  $y$'s portrayal of  $x$'s moral agenda does not clash with  $x$'s own portrayal.
\par}}
\smallskip
As represented in the graph above, the moral agendas of  $\ell, s$ and $b$  are undermined by all the other testimonies: indeed, $b$'s portrayal of himself as a competent, powerful, resourceful, and sexually irresistible bandit with his own code of honour is undermined by  $\ell$'s and $s$'s portrayal of him as a rapist, and by $w$'s portrayal of him as clumsy, incompetent, and gullible; 
$\ell$'s portrayal of herself as a passive victim (not only  of rape by the bandit, but also of her husband's cynicism and cold lack of interest in her honour and her fate) is undermined both by $b$'s portrayal of her as a fierce woman, and by $s$'s and $w$'s portrayals of her as being cold, manipulative, and capable of turning situations to her own advantage;   
$s$'s portrayal of himself as a honourable man who has suffered  the basest betrayal by his lawful wife is undermined by $b$'s description of  $\ell$'s legitimate interest in protecting her own honour, and by $\ell$'s and $w$'s portrayal of $s$ as a cowardly and dishonorable man. Finally, since the presence of  $w$ at the scene is not registered in any other testimony, $w$'s portrayal of himself as a pure witness and a law-abiding citizen is compatible with all testimonies (including his own).  
The concept lattice arising from this graph is represented in the following picture,
\footnote{Notice that the resulting concept lattice is distributive; this is unsurprising, since the graph originating it is not only reflexive but also transitive (cf.~\cite[Proposition 4.1]{Conradie2020NondistributiveLF})} where e.g.~$(\ell s, bw)$ is the shorthand for the formal concept of $(Z,E)$ having $\{z_\ell, z_s\}$ as its extension and $\{z_b, z_w\}$ as its intension. 

\begin{center}
\begin{tikzpicture}
\draw[very thick] (-1, 0) -- (-1, 1) --
	(0, 0) -- (1, 1) -- (1, 0) -- (0, 1) -- (-1, 0);
	\draw[very thick] (0, 2) -- (-1, 1);
\draw[very thick] (0, 2) -- (0, 1);
\draw[very thick] (0, 2) -- (1, 1);
	\draw[very thick] (0, -1) -- (-1, 0);
\draw[very thick] (0, -1) -- (0, 0);
\draw[very thick] (0, -1) -- (1, 0);
\draw[very thick] (0, 3) -- (0, 2);
\filldraw[black] (0,3) circle (2 pt);
	\filldraw[black] (0,-1) circle (2 pt);
	\filldraw[black] (0, 2) circle (2 pt);
    \filldraw[black] (-1, 1) circle (2 pt);
	\filldraw[black] (1, 1) circle (2 pt);
	\filldraw[black] (0, 1) circle (2 pt);
	\filldraw[black] (-1, 0) circle (2 pt);
	\filldraw[black] (1, 0) circle (2 pt);
	\filldraw[black] (0, 0) circle (2 pt);
		\draw (0.6, 2.2) node {$(b\ell s , w)$};
	\draw (0, -1.3) node {$(\varnothing, b\ell sw)$ };
 \draw (0, 3.3) node {$(b\ell sw, \varnothing)$ };
	\draw (-1.6, 0) node {{\small{$(b, \ell sw)$}}};
\draw (-1.6, 1) node {{\small{$(b\ell, sw)$}}};
    \draw (0.20, -0.2) node {{\small{$(\ell, bs w)$}}};
    \draw (0.07, 1.2) node {{\small{$(bs, \ell w)$}}};
    \draw (1.6, 0) node {{\small{$(s, b\ell w)$}}};
    \draw (1.6, 1) node {{\small{$(\ell s, b w)$}}};
     
\end{tikzpicture}
\end{center}
Additional ``epistemic'' relations, besides the basic relation $E$, can be considered, each of which encoding e.g.~how each of the main characters might argue (for instance, for the purpose of persuading a jury of their moral stance) that some of the other testimonies do not actually invalidate their own portrayal of themselves. For instance, $s$ might argue that the overall portrayal  $b$ makes of him is not incompatible with his own portrayal of himself as a dignified man, who adheres to his social role; likewise, $b$ might argue that, through $s$'s testimony,  his outlaw code of honour emerges with sufficient clarity; finally, $\ell$ might argue that, while in the beginning  $b$ might have mistaken her character as fierce, later on he recognized this as a misunderstanding, and hence, overall, $b$'s portrayal of $\ell$ is compatible with $\ell$'s  own. Thus, the additional relations $R_b$, $R_\ell$ and $R_s$ are represented as follows (in the following pictures, $E$ is understood to be properly included in each of the three additional relations):
\begin{center}
\begin{tikzpicture}
\draw[very thick, ->, red] (-5, 1.3) -- (-3.7, 0.1) ;
\draw[very thick, ->] (-5, 0) -- (-6.2, 0) ;
\draw[very thick, ->] (-5, 0) -- (-5, 1.2) ;
	\draw[very thick, ->] (-5, 0) -- ( -3.8, 0);

\filldraw[black] (-5,0) circle (2 pt);
	\filldraw[black] (-6.3,0) circle (2 pt);
	\filldraw[black] (-3.6, 0) circle (2 pt);
    \filldraw[black] (-5, 1.3) circle (2 pt);
	
		\draw (-5, 1.6) node {$z_b$};

	\draw (-6.6, 0) node {{\small{$z_\ell$}}};

    \draw (-5, -0.2) node {{\small{$z_w$}}};

    \draw (-3.3, 0) node {{\small{$z_s$}}};

\draw (-5, -0.7) node {{\small{$R_b$}}};

\draw[very thick, ->, red] (-1.2,0.2) -- (-0.1, 1.2) ;
\draw[very thick, ->] (0, 0) -- (-1.2, 0) ;
\draw[very thick, ->] (0, 0) -- (0, 1.2) ;
	\draw[very thick, ->] (0, 0) -- ( 1.2, 0);

\filldraw[black] (0,0) circle (2 pt);
	\filldraw[black] (-1.3,0) circle (2 pt);
	\filldraw[black] (1.3, 0) circle (2 pt);
    \filldraw[black] (0, 1.3) circle (2 pt);
	
		\draw (0, 1.6) node {$z_b$};

	\draw (-1.6, 0) node {{\small{$z_\ell$}}};

    \draw (0.20, -0.2) node {{\small{$z_w$}}};

    \draw (1.6, 0) node {{\small{$z_s$}}};

    \draw (0, -0.7) node {{\small{$R_\ell$}}};


    \draw[very thick, <-, red] (5.1, 1.2) -- (6.2, 0.1) ;
\draw[very thick, ->] (5, 0) -- (6.2, 0) ;
\draw[very thick, ->] (5, 0) -- (5, 1.2) ;
	\draw[very thick, ->] (5, 0) -- ( 3.8, 0);

\filldraw[black] (5,0) circle (2 pt);
	\filldraw[black] (6.3,0) circle (2 pt);
	\filldraw[black] (3.6, 0) circle (2 pt);
    \filldraw[black] (5, 1.3) circle (2 pt);
	
		\draw (5, 1.6) node {$z_b$};

	\draw (6.6, 0) node {{\small{$z_s$}}};

    \draw (5, -0.2) node {{\small{$z_w$}}};

    \draw (3.3, 0) node {{\small{$z_\ell$}}};

\draw (5, -0.7) node {{\small{$R_s$}}};

\end{tikzpicture}
\end{center}
That is,  the intended interpretation of edges in $R_u \subseteq Z\times Z$, for any $u\in \{b, \ell, s\}$, is the  following: for  all $x, y\in \{b, \ell, s, w\}$,
\smallskip

{{\centering
$z_xR_{u}z_y$ \quad iff \quad  $y$'s portrayal of  $x$'s moral agenda does not clash with  $x$'s own portrayal, according to $u$.
\par}}

These additional relations are all $E$-compatible, and each of them induces a pair of adjoint modal operators $\Diamond_u\dashv \Box_u$ for any $u\in \{b, \ell, s\}$, represented as follows:\footnote{In the pictures, the red descending arrows represent the non-identity assignments of the $\Box$-type operators and the blue ascending ones those of their left adjoints; for the sake of readability, the identity assignments of all operations are omitted.}
\begin{center}
\begin{tabular}{ccc}
\begin{tikzpicture}
\draw[very thick] (-1, 0) -- (-1, 1) --
	(0, 0) -- (1, 1) -- (1, 0) -- (0, 1) -- (-1, 0);
	\draw[very thick] (0, 2) -- (-1, 1);
\draw[very thick] (0, 2) -- (0, 1);
\draw[very thick] (0, 2) -- (1, 1);
	\draw[very thick] (0, -1) -- (-1, 0);
\draw[very thick] (0, -1) -- (0, 0);
\draw[very thick] (0, -1) -- (1, 0);
\draw[very thick] (0, 3) -- (0, 2);
\filldraw[black] (0,3) circle (2 pt);
	\filldraw[black] (0,-1) circle (2 pt);
	\filldraw[black] (0, 2) circle (2 pt);
    \filldraw[black] (-1, 1) circle (2 pt);
	\filldraw[black] (1, 1) circle (2 pt);
	\filldraw[black] (0, 1) circle (2 pt);
	\filldraw[black] (-1, 0) circle (2 pt);
	\filldraw[black] (1, 0) circle (2 pt);
	\filldraw[black] (0, 0) circle (2 pt);
		\draw (0.6, 2.2) node {$(b\ell s , w)$};
	\draw (0, -1.3) node {$(\varnothing, b\ell sw)$ };
 \draw (0, 3.3) node {$(b\ell sw, \varnothing)$ };
	\draw (-1.6, 0) node {{\small{$(b, \ell sw)$}}};
\draw (-1.6, 1) node {{\small{$(b\ell, sw)$}}};
    \draw (0.20, -0.2) node {{\small{$(\ell, bs w)$}}};
    \draw (0.07, 1.2) node {{\small{$(bs, \ell w)$}}};
    \draw (1.6, 0) node {{\small{$(s, b\ell w)$}}};
    \draw (1.6, 1) node {{\small{$(\ell s, b w)$}}};
        
    \draw (0, -2) node {$\Diamond_b \dashv \Box_b$ };  



\draw[very thick, red, <-] (0, 0.1)  .. controls (-0.1, 1.2) and  (-0.6, 1.3)  .. (-1, 1);

\draw[very thick, red, <-] (0, -0.9)  .. controls (-0.1, 0.2) and  (-0.6, 0.3)  .. (-1, 0); 

\draw[very thick, blue, ->] (1, 0.1)  .. controls (0.9, 1.2) and  (0.4, 1.3)  .. (0, 1);

\draw[very thick, blue, ->] (1, 1.1)  .. controls (0.9, 2.2) and  (0.4, 2.3)  .. (0, 2);


\end{tikzpicture}
& 
\begin{tikzpicture}
\draw[very thick] (-1, 0) -- (-1, 1) --
	(0, 0) -- (1, 1) -- (1, 0) -- (0, 1) -- (-1, 0);
	\draw[very thick] (0, 2) -- (-1, 1);
\draw[very thick] (0, 2) -- (0, 1);
\draw[very thick] (0, 2) -- (1, 1);
	\draw[very thick] (0, -1) -- (-1, 0);
\draw[very thick] (0, -1) -- (0, 0);
\draw[very thick] (0, -1) -- (1, 0);
\draw[very thick] (0, 3) -- (0, 2);
\filldraw[black] (0,3) circle (2 pt);
	\filldraw[black] (0,-1) circle (2 pt);
	\filldraw[black] (0, 2) circle (2 pt);
    \filldraw[black] (-1, 1) circle (2 pt);
	\filldraw[black] (1, 1) circle (2 pt);
	\filldraw[black] (0, 1) circle (2 pt);
	\filldraw[black] (-1, 0) circle (2 pt);
	\filldraw[black] (1, 0) circle (2 pt);
	\filldraw[black] (0, 0) circle (2 pt);
		\draw (0.6, 2.2) node {$(b\ell s , w)$};
	\draw (0, -1.3) node {$(\varnothing, b\ell sw)$ };
 \draw (0, 3.3) node {$(b\ell sw, \varnothing)$ };
	\draw (-1.6, 0) node {{\small{$(b, \ell sw)$}}};
\draw (-1.6, 1) node {{\small{$(b\ell, sw)$}}};
    \draw (0.20, -0.2) node {{\small{$(\ell, bs w)$}}};
    \draw (0.07, 1.2) node {{\small{$(bs, \ell w)$}}};
    \draw (1.6, 0) node {{\small{$(s, b\ell w)$}}};
    \draw (1.6, 1) node {{\small{$(\ell s, b w)$}}};

    \draw (0, -2) node {$\Diamond_\ell \dashv \Box_\ell$ };   

\draw[very thick, red, <-] (1.1, 0)  .. controls (1.4, 0.2) and  (1.4, 0.8)  .. (1.1, 1); 

\draw[very thick, red, <-] (0.1, -1)  .. controls (0.4, -0.8) and  (0.4, -0.2)  .. (0.1, 0);

    \draw[very thick, blue, ->] (-1.1, 0)  .. controls (-1.4, 0.2) and  (-1.4, 0.8)  .. (-1.1, 1); 
  \draw[very thick, blue, ->] (-0.1, 1)  .. controls (-0.4, 1.2) and  (-0.4, 1.8)  .. (-0.1, 2);

\end{tikzpicture}
&
\begin{tikzpicture}
\draw[very thick] (-1, 0) -- (-1, 1) --
	(0, 0) -- (1, 1) -- (1, 0) -- (0, 1) -- (-1, 0);
	\draw[very thick] (0, 2) -- (-1, 1);
\draw[very thick] (0, 2) -- (0, 1);
\draw[very thick] (0, 2) -- (1, 1);
	\draw[very thick] (0, -1) -- (-1, 0);
\draw[very thick] (0, -1) -- (0, 0);
\draw[very thick] (0, -1) -- (1, 0);
\draw[very thick] (0, 3) -- (0, 2);
\filldraw[black] (0,3) circle (2 pt);
	\filldraw[black] (0,-1) circle (2 pt);
	\filldraw[black] (0, 2) circle (2 pt);
    \filldraw[black] (-1, 1) circle (2 pt);
	\filldraw[black] (1, 1) circle (2 pt);
	\filldraw[black] (0, 1) circle (2 pt);
	\filldraw[black] (-1, 0) circle (2 pt);
	\filldraw[black] (1, 0) circle (2 pt);
	\filldraw[black] (0, 0) circle (2 pt);
		\draw (0.6, 2.2) node {$(b\ell s , w)$};
	\draw (0, -1.3) node {$(\varnothing, b\ell sw)$ };
 \draw (0, 3.3) node {$(b\ell sw, \varnothing)$ };
	\draw (-1.6, 0) node {{\small{$(b, \ell sw)$}}};
\draw (-1.6, 1) node {{\small{$(b\ell, sw)$}}};
    \draw (0.20, -0.2) node {{\small{$(\ell, bs w)$}}};
    \draw (0.07, 1.2) node {{\small{$(bs, \ell w)$}}};
    \draw (1.6, 0) node {{\small{$(s, b\ell w)$}}};
    \draw (1.6, 1) node {{\small{$(\ell s, b w)$}}};
     
\draw[very thick, red, <-] (0, 0.1)  .. controls (0.1, 1.2) and  (0.6, 1.3)  .. (1, 1);
  \draw (0, -2) node {$\Diamond_s \dashv \Box_s$ };  

  \draw[very thick, red, <-] (0, -0.9)  .. controls (0.1, 0.2) and  (0.6, 0.3)  .. (1, 0); 

\draw[very thick, blue, ->] (-1, 0.1)  .. controls (-0.9, 1.2) and  (-0.4, 1.3)  .. (0, 1);

\draw[very thick, blue, ->] (-1, 1.1)  .. controls (-0.9, 2.2) and  (-0.4, 2.3)  .. (0, 2);
  \draw (0, -2) node {$\Diamond_s \dashv \Box_s$ };  
\end{tikzpicture}
\\
\end{tabular}
\end{center}
As it can be easily verified by inspection, the operations represented above validate the following axioms for any $u\in \{b, \ell, s\}$ and any $c\in \mathbb{L}$:
\[\Box_u c\vdash c\quad c\vdash \Diamond_u c\quad c\vdash \Box_u\Diamond_u c\quad \Diamond_u\Box_uc\vdash c\quad \Box_uc\vdash \Box_u\Box_u c\quad \Diamond_u\Diamond_u c\vdash \Diamond_u c,\]
which implies (cf.~Table \ref{fig:MPS:Famous}) that the relations $R_b$, $R_\ell$ and $R_s$ are $E$-reflexive and $E$-transitive (see Remark \ref{remark:wtf_are_E_properties} below).
Hence, each of the pairs of adjoint operations induced by the additional relation can be understood to represent the lower and upper  approximations of `knowable  truth' from the subjective viewpoint of the main characters of the story. Finally, notice that the `knowable truth' can be exactly captured via the collective upper and lower  approximations respectively defined by  the  assignments $c\mapsto \Box_b c\vee \Box_\ell c\vee \Box_s c$ 
and $c\mapsto \Diamond_b c\wedge \Diamond_\ell c\wedge \Diamond_s c$.

\begin{remark}
\label{remark:wtf_are_E_properties}
In Section \ref{ssec:shifting}, we show how the first-order correspondents of modal reduction principles in Kripke frames can be {\em shifted} (see Definition \ref{def:shifting} and Theorem \ref{thm: sahlqvist shifting}) to properties on graph-based frames which are parametric on the relation $E$; hence we use the convention that the shifting (parametric on $E$) of some property {\em prop} is called $E$-{\em prop}. For instance, the shifting of denseness is called $E$-denseness, and the shifting of reflexivity is $E$-reflexivity.
\end{remark}

%
In  both examples considered above, the reflexive  relations $E$ is used to model the {\em inherent indiscernibility} or {\em limits of knowability} in a given situation, while additional relations ($R_\Diamond$, $R_\Box$, $R_s$, $R_\ell$ etc.) are used to model {\em agent-specific indiscernibility}. The inherent indiscernibility relation $E$ gives rise to a complete lattice, representing the space of conceptual possibilities and their ordering (in the first example above this becomes the space of so-called echo chambers while, in the second, the moral compatibility space of testimonies). This is analogous to the equivalence relation in an approximation space giving rise to a Boolean algebra of rough sets. Unlike in the modal logic approach to rough set theory (see, e.g.,\ \cite{orlowska1994rough}), however, $E$ is not used to interpret modalities. Rather, $E$ is used to generate a lattice of stable sets on which the propositional connectives are algebraically interpreted, thereby   `internalizing $E$' into the propositional fragment of the logic, which causes it to be non-classical and, in fact, non-distributive. This is similar to the situation in intuitionistic (modal) logic where the partial orders in intuitionistic Kripke frame generate Heyting algebras of upwards closed sets. 

The relations associated with an agent captures the peculiarities of their specific epistemic access or  lens through which they view the space of conceptual possibilities  (in the first example above, this takes the form of epistemic access between agents beyond the ``source/follower'' relation and, in the second, the agents' notions of compatibility as viewed from their personal moral standpoints). As one would expect, such views remain inherently conditioned and mediated by the space of conceptual possibilities (i.e.~by the inherent bounds on knowability), and this fact is mathematically captured by the requirement that the agents' relations be compatible with $E$. Accordingly, a modal expression $\Diamond p$ represents the proposition $p$ reflected through a particular agent-specific lens.  Thus, the modeling of the scenarios above using graph-based frames maintains a neat separation between the inherent and the epistemic indiscernibility.  Such a clear division of labour would not be possible had we used $E$ to interpret an additional set of modal operators.

\section{Hyper-constructivist approximation spaces} \label{sec:Rough set theory on graph-based frames}

In the previous section, we showed how graph-based frames and their associated  modal languages can formalize reasoning under epistemic uncertainty (modelled by additional relations $R_\Box$, $R_\Diamond$ etc.) in a framework with inherent indiscernibility (modeled by relation $E$). In a similar way, such models have served to represent   other scenarios in which inherent limits to knowability play a relevant role  \cite{graph-based-wollic,CONRADIE2021115,Conradie2019/08}. These examples motivate  the introduction of {\em hyper-constructivist} (or {\em evidentialist}) {\em approximation spaces}, based  on graph-based frames, as a generalization of approximation spaces to a framework with inherent informational entropy. The strategy we  follow for introducing this generalization  is  similar  to the one adopted in \cite{CONRADIE2021371} for introducing the notion of conceptual approximation space. Namely, we identify the defining properties of hyper-constructivist approximation spaces (cf.\ Definition \ref{def:graph based approximation spaces}) in terms of the first-order conditions corresponding, on graph-based frames, to the Sahlqvist mrps corresponding, on  Kripke frames, to the first-order conditions expressing the seriality, reflexivity, transitivity, and symmetry of the accessibility relations associated with the modal operators. 
 
\begin{definition}\label{def:graph based approximation spaces}
{\em A hyper-constructivist} (or  {\em evidentialist}) {\em approximation space} is a graph-based $\mathcal{L}$-frame $\mathbb{F} = (Z,E,R_\Box, R_\Diamond)$ such that   $E \subseteq R_{\Box}\, \compbox \, R_{\blacksquare}$, where $\compbox$ is defined  as in Definition \ref{def:relational composition}. 
A hyperconstructivist approximation space  $\mathbb{F} = (Z,E,R_\Box, R_\Diamond)$ is:
\begin{enumerate}
    \item $E$-{\em reflexive} if   $E \subseteq R_\Box$ and $D \subseteq R_\Diamond $. 
    \item $E$-{\em symmetric} if   $R_\blacksquare \subseteq R_\Box$ and $R_\Diamondblack \subseteq R_\Diamond $.
    \item $E$-{\em transitive} if   $R_{\Box}\compbox R_{\Box}\subseteq R_{\Box}$ and  $R_{\Diamond}\compdia R_{\Diamond}\subseteq R_{\Diamond}$.
    \item{\em Pawlak} if it is $E$-reflexive, $E$-symmetric and $E$-transitive. 
\end{enumerate}
\end{definition}
In what follows, we motivate the definition above. As discussed in Section \ref{ssec:approx spaces}, one of the most basic requirements in the notion of generalized approximation spaces is that the indiscernibility relation be {\em serial}, since this condition is the first-order correspondent of the mrp $\Box a \leq \Diamond a $, which guarantees  that 
the lower approximation is smaller than or equal to the upper approximation. The ALBA outputs of   $\Box a \leq \Diamond a $ (which is an analytic inductive mrp) are $\forall\nomj(\nomj\leq \Diamond\Diamondblack \nomj)$ and $\forall \cnomm(\Box\blacksquare \cnomm\leq \cnomm)$ (cf.~Section \ref{ssec:mrp}), which,  when translated in the first-order language of Kripke frames, 
 yield the first-order correspondents $\forall w(\{w\}\subseteq R_\Diamond^{-1}[R^{-1}_{\Diamondblack}[w]])$ and $\forall w((R_\Box^{-1}[(R_{\blacksquare}^{-1}[\{w\}^{cc}])^{cc}])^c\subseteq \{w\}^c)$, which can be respectively unravelled as $\forall w \exists v (w R_\Diamond v \, \& \, v R_{\Diamondblack} w)$ and $\forall w \exists v (w R_\Box v \, \& \, v R_{\blacksquare} w)$. 
 These first-order sentences can   equivalently be rewritten  as  the  relational inclusions $\Delta \subseteq R_\Diamond \circ R_{\Diamondblack}$ and   $\Delta \subseteq R_\Box \circ R_{\blacksquare}$, respectively. Translating the ALBA outputs above in the first-order language of graph-based frames  yields 
 \begin{center}
 $\forall z(R_\Diamond^{[0]}[E^{[0]}[R_{\Diamondblack}^{[0]}[z^{[10]}]]]\subseteq z^{[1]} )\quad $ and $\quad \forall z(R_\Box^{[0]}[E^{[1]}[R_{\blacksquare}^{[0]}[z^{[01]}]]] \subseteq z^{[0]} )$.
 \end{center} 
By the $E$-compatibility of $R_\Box$ and $R_\Diamond$ (cf.~Lemma \ref{equivalents of I-compatible}) these inclusions can be simplified  as follows: 

 \begin{center}
 $\forall z(R_\Diamond^{[0]}[E^{[0]}[R_{\Diamondblack}^{[0]}[z]]]\subseteq z^{[1]} )\quad $ and $\quad \forall z(R_\Box^{[0]}[E^{[1]}[R_{\blacksquare}^{[0]}[z]]] \subseteq z^{[0]} )$,
 \end{center} 
 and then be partially unravelled as the following implications (recall that $D\coloneqq E^{-1})$: 
 \begin{center}
  $\forall x \forall z ( x \in R_\Diamond^{[0]}[E^{[0]}[R_\Diamondblack^{[0]}[z]]] \Rightarrow x D^c z)\quad $ and 
  $\forall a \forall z ( a \in R_\Box^{[0]}[E^{[1]}[R_\blacksquare^{[0]}[z]]] \Rightarrow a E^c z)$, 
 \end{center} 
 which, on application of contraposition yields 
 \begin{center}
  $\forall x \forall z (x D z \Rightarrow x \notin R_\Diamond^{[0]}[E^{[0]}[R_\Diamondblack^{[0]}[z]]])\quad $ and 
 $\quad \forall a\forall z (a E z \Rightarrow a \notin R_\Box^{[0]}[E^{[1]}[R_\blacksquare^{[0]}[z]]])$, 
 \end{center} 
 and then as follows: 
  \begin{center}
  $\forall x \forall z (x D z \Rightarrow x (R_\Diamond \compdia R_\Diamondblack) z) \quad $ and 
 $\quad \forall a\forall z (a E z \Rightarrow a(R_\Box \compbox R_\blacksquare)z) $, 
 \end{center} 
 by letting $x(R_\Diamond \compdia R_\Diamondblack) z$  iff  $x \notin R_\Diamond^{[0]}[E^{[0]}[R_\Diamondblack^{[0]}[z]]]$, and $a(R_\Box \compbox R_\blacksquare)z$ iff $a \notin R_\Box^{[0]}[E^{[1]}[R_\blacksquare^{[0]}[z]]]$.
These `abbreviations', which are in fact  instances of the notion of $E$-mediated composition (cf.~Definition \ref{def:relational composition} in Section \ref{ssec:lifting properties}, below), allow us to equivalently rewrite  these first-order sentences as relational inclusions $D \subseteq R_\Diamond \compdia R_\Diamondblack$ and  $E \subseteq R_\Box\compbox R_\blacksquare$, respectively.

What is more, when rewritten in this way, 
 it is possible to recognize the similarity between the first-order correspondents on  graph-based frames and those 
 on Kripke frames. Specifically, it is not difficult to verify that, instantiating $E\coloneqq \Delta$ in $D \subseteq R_\Diamond \compdia R_\Diamondblack$, one obtains a condition which is equivalent to $\Delta \subseteq R_\Diamond \circ R_\Diamondblack$, and similarly with the second pair of inclusions.  
This hints at the existence of a {\em transfer} of relational constructions and  properties along the natural embedding  of sets into reflexive graphs. Such transfer  hinges on a `parametric shift' from $\Delta$ to $E$. 


In Section \ref{ssec:shifting},  
we show that this transfer of properties  and constructions, formalized in terms of the notion of {\em shifting}  (cf.~Definition \ref{def:shifting}), extends to all first-order correspondents of inductive mrps.
In Section \ref{sec:Applications to rough set theory}, we apply this result to the principled definition of various hyper-constructivist counterparts of generalized approximation spaces, 
%
%
 and discuss how this transfer works not only technically but also conceptually, given that  the intuitive meaning of modal axioms transfers  from Kripke frames to the graph-based frames (cf.~Examples \ref{ex:reflexivity}, \ref{ex:symmetry}, \ref{ex:transitivity}).  
\section{Relational compositions, and the algebras of relations on graph-based  and Kripke frames}
\label{sec: composing crisp}

In this section we partly recall, partly introduce several notions of relational compositions  on  graph-based frames \cite{graph-based-wollic,conradie2022modal}. We also introduce the relational languages $\mathsf{KRel}_{\mathcal{L}}$ and $\mathsf{GRel}_{\mathcal{L}}$ associated with Kripke frames and graph-based frames respectively and used to express first-order conditions in these contexts. We define a translation from $\mathsf{GRel}$ to $\mathsf{KRel}$ which is compatible with the natural relationship between these classes. The definitions and notation we introduce here will be key to  a meaningful comparison of the correspondents of inductive mrps in different semantic settings.

\subsection{Composing relations on graph-based frames }
\label{ssec:lifting properties}

The following definition expands those given in \cite[Section 3.4]{CONRADIE2021371}. As will be clear from Lemma \ref{lemma:compdia compbox delta}, the $E$-compositions can be understood as the counterparts of the usual composition of relations in the setting of graph-based frames. 
\begin{definition}
\label{def:relational composition}
	For any graph  $\mathbb{G}= (Z, E)$, and any $R, T\subseteq Z\times Z$, the $E$-mediated compositions $R\,\compdia T$ and $ R\,\compbox T$ and the composition $ R\,\ast T \subseteq Z\times Z$ are defined as follows: for all $a,x\in Z$,
	\begin{center}
    \begin{tabular}{rcccl}
    $x (R\, \compdia T) a$ & iff & $x \in (R^{[0]}[E^{[0]}[T^{[0]} [a]]])^c$ & i.e. & $(R\,\compdia T)^{[0]} [a] = {R}^{[0]}[E^{[0]}[T^{[0]} [a]]]$\\
    $a (R\, \compbox T) x$ & iff & $a \in (R^{[0]}[E^{[1]}[T^{[0]} [x]]])^c$ & i.e. & $(R\, \compbox T)^{[0]} [x] = R^{[0]}[E^{[1]}[T^{[0]} [x]]]$\\ 
    $a (R\, \ast T) x$ & iff & $a \in (R^{[0]}[T^{[0]} [x]])^c$ & i.e. & $(R\, \ast T)^{[0]} [x] = R^{[0]}[T^{[0]} [x]]$.
    \end{tabular}
    \end{center}
\end{definition}

The following lemma, which readily follows from the definition, shows that the two notions of $E$-mediated composition collapse to the usual notion when $E \coloneqq \Delta$.
\begin{lemma}
\label{lemma:compdia compbox delta}
For all sets $S$, all relations $R,T \subseteq S \times S$ on the graph $\mathbb{G}_S = (S, \Delta)$, 
\begin{center}
    $R \, \compdiadelta T = R \, \compboxdelta T= R \, \circ T.$
\end{center}
\end{lemma}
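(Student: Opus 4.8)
The plan is to reduce the entire statement to a single observation about how the box-superscript operators $E^{[0]}[\cdot]$ and $E^{[1]}[\cdot]$ degenerate when $E = \Delta$, and then to substitute the outcome into Definition \ref{def:relational composition}. The point is that the two $E$-mediated compositions differ from each other \emph{only} in whether they apply $E^{[0]}$ or $E^{[1]}$ to the intermediate set $T^{[0]}[a]$, so once these two operators are shown to coincide in the case $E = \Delta$, the two compositions must coincide as well, and a short unravelling identifies their common value with $R \circ T$.

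First I would establish that, for $E = \Delta$ and any $W \subseteq S$, both $E^{[0]}[W] = W^c$ and $E^{[1]}[W] = W^c$. This is immediate from the definitions in \eqref{eq:def round square brackets}: since $u\, \Delta^c\, v$ holds precisely when $u \neq v$, we have $\Delta^{[0]}[W] = \{u \mid \forall v(v \in W \Rightarrow u \neq v)\} = \{u \mid u \notin W\} = W^c$, and symmetrically $\Delta^{[1]}[W] = \{v \mid \forall u(u \in W \Rightarrow u \neq v)\} = W^c$. Thus, when $E = \Delta$, both operators act as set-theoretic complementation, and in particular they agree.

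Next I would substitute this into the defining clauses. For the $\Diamond$-composition, $(R \,\compdiadelta\, T)^{[0]}[a] = R^{[0]}[E^{[0]}[T^{[0]}[a]]] = R^{[0]}[(T^{[0]}[a])^c]$, and since $(T^{[0]}[a])^c = \{w \mid w\, T\, a\} = T^{-1}[a]$ (again directly from \eqref{eq:def round square brackets}), this equals $R^{[0]}[T^{-1}[a]]$. Unravelling $x \in ((R \,\compdiadelta\, T)^{[0]}[a])^c$, i.e.\ $x \notin R^{[0]}[T^{-1}[a]]$, then yields that $x\,(R \,\compdiadelta\, T)\,a$ holds iff $\exists v(x\,R\,v \,\wedge\, v\,T\,a)$, which is exactly $x\,(R \circ T)\,a$ under the composition convention fixed elsewhere in the paper (cf.\ the seriality discussion of Section \ref{sec:Rough set theory on graph-based frames}, where $\Delta \subseteq R_\Diamond \circ R_\Diamondblack$ is read as $\forall w\,\exists v(w\,R_\Diamond\,v \,\wedge\, v\,R_\Diamondblack\,w)$). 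The computation for $\compboxdelta$ is word-for-word identical, because $E^{[1]}[T^{[0]}[a]] = (T^{[0]}[a])^c$ is the \emph{same} set as $E^{[0]}[T^{[0]}[a]]$ when $E = \Delta$; hence $R \,\compboxdelta\, T = R \circ T$ too, which establishes all three equalities.

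There is no genuine obstacle here: the only care needed is the bookkeeping of the nested complements and confirming that the direction of $\circ$ matches the paper's convention. Conceptually, the content worth emphasising is precisely \emph{why} the collapse happens — the $E$-mediation is neutralised exactly because $\Delta^{[0]}$ and $\Delta^{[1]}$ both reduce to complementation — which is what pins down the reflexive relation $E$ (and in the limit the identity $\Delta$) as the correct generalisation target for transferring relational constructions from Kripke frames to graph-based frames.
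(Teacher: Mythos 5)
Your proof is correct and is exactly the intended argument: the paper offers no written proof (it states that the lemma ``readily follows from the definition''), and your computation --- observing that $\Delta^{[0]}$ and $\Delta^{[1]}$ both reduce to complementation, substituting into Definition \ref{def:relational composition}, and unravelling the nested complements to recover $x\,(R\circ T)\,a$ iff $\exists v(x\,R\,v \,\&\, v\,T\,a)$ --- is precisely the verification being left to the reader. The check against the paper's convention for $\circ$ via the seriality discussion is a sensible piece of due diligence and nothing is missing.
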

The following lemma shows that the $E$-mediated compositions $\compbox$ and $\compdia$ are well defined, associative, and have unit $E$ and $D$, respectively; hence they endow the set of $E$-compatible relations over a reflexive graph with a monoidal structure.
\begin{lemma}
\label{lemma:properties of homogeneous composition}
For any graph $\mathbb{G}= (Z,E)$,  all $R, T, U \subseteq Z\times Z$, and any $Y \subseteq Z$,
\begin{enumerate}
    \item[1.] if $R$ is $E$-compatible, then $R\,\compdia  D = R = D\, \compdia R  \quad \text{and} \quad 
R\,\compbox  E = R = E\, \compbox R$;
    \item[2.] if $R$ is $E$-compatible, then so are $R\,\compdia T$, $R\,\compbox T$, and $R\,\ast T$;
    \item[3.] if $R$ and $T$ are $E$-compatible, then $(R \, \compdia T)^{[0]}[Y]=R^{[0]}[E^{[0]}[T^{[0]}[Y]]]$ \ \ and \ \ $(R\, \compbox T)^{[0]}[Y]=R^{[0]}[E^{[1]}[T^{[0]}[Y]]]$;
    \item[4.] if $R$, $T$, and $U$ are $E$-compatible, then $R \,\compdia (T \,\compdia U) = (R \,\compdia T) \,\compdia U$ \ \ and \ \ $R \,\compbox (T \,\compbox U) = (R \,\compbox T) \,\compbox U$.
\end{enumerate}
\end{lemma}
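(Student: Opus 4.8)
The plan is to carry out every computation inside the antitone Galois connection $((\cdot)^{[0]},(\cdot)^{[1]})$ induced by $E$ on $\mathcal{P}(Z)$, using only the elementary identities of Lemma~\ref{lemma: basic} (antitonicity, the union-to-intersection laws $R^{[0]}[\bigcup\mathcal{U}]=\bigcap_{U'}R^{[0]}[U']$, and the stability $S^{[010]}=S^{[0]}$) together with the closure-insensitivity reformulations of $E$-compatibility supplied by Lemma~\ref{equivalents of I-compatible}, namely that for a box-type compatible $R$ one has $R^{[0]}[Y]=R^{[0]}[Y^{[01]}]$ and $R^{[1]}[B]=R^{[1]}[B^{[10]}]$ for all $Y,B$ (so that every set $R^{[0]}[Y]$ is Galois-stable). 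Two one-line identities drive the unit computations and I would record them first: since $xDa$ iff $aEx$, one has $D^{[0]}[W]=E^{[1]}[W]=W^{[1]}$, and dually $E^{[1]}[E^{[0]}[\{x\}]]=\{x\}^{[01]}$ and $E^{[0]}[D^{[0]}[\{a\}]]=\{a\}^{[10]}$. I would also observe at the outset that the $\compbox$-clauses and the $\compdia$-clauses are order-dual under the involution $R\mapsto R^{-1}$, which exchanges $E$ with $D$, swaps the two families of compatibility conditions, and turns $\compbox$ into $\compdia$; hence it suffices to prove each clause for $\compbox$ and transfer it. Throughout I take both $R$ and $T$ to be $E$-compatible, as is standard in this setting, and I use that a relation is determined by the sets $(\cdot)^{[0]}[\{a\}]$, so that checking singletons suffices.

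The technical heart is clause~3, which I would prove first since clauses~2 and~4 rest on it. Writing $Y=\bigcup_{a\in Y}\{a\}$ and applying the union-to-intersection law twice gives $(R\compbox T)^{[0]}[Y]=\bigcap_{a\in Y}R^{[0]}[E^{[1]}[T^{[0]}[\{a\}]]]=R^{[0]}\bigl[\bigcup_{a\in Y}E^{[1]}[T^{[0]}[\{a\}]]\bigr]$, whereas the target right-hand side is $R^{[0]}[E^{[1]}[T^{[0]}[Y]]]$ with $T^{[0]}[Y]=\bigcap_{a\in Y}T^{[0]}[\{a\}]$. Thus everything reduces to showing that the two arguments of $R^{[0]}$, namely $W\coloneqq\bigcup_{a}E^{[1]}[T^{[0]}[\{a\}]]$ and $V\coloneqq E^{[1]}[\bigcap_{a}T^{[0]}[\{a\}]]$, have the same image under $R^{[0]}$. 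This is where both compatibilities enter: $T$-compatibility makes each $T^{[0]}[\{a\}]$ Galois-stable, hence so is their intersection, and computing $(\cdot)^{[0]}$-images via Lemma~\ref{lemma: basic} yields $W^{[0]}=\bigcap_a (T^{[0]}[\{a\}])^{[10]}=\bigcap_a T^{[0]}[\{a\}]=V^{[0]}$; since $V$ and $W^{[01]}$ are both $[01]$-stable and have equal $(\cdot)^{[0]}$-images, they coincide, so $V=W^{[01]}$. Finally $R$-compatibility absorbs the closure: $R^{[0]}[W]=R^{[0]}[W^{[01]}]=R^{[0]}[V]$. I expect the identification $V=W^{[01]}$ to be the main obstacle, as it is the only point where one must commute the antitone map $E^{[1]}$ past an intersection, and it genuinely requires the stability of the sets $T^{[0]}[\{a\}]$ rather than following from the Galois connection alone.

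With clause~3 in hand the remaining clauses are short. For clause~2, the first compatibility condition of each of $R\compbox T$, $R\compdia T$, $R\ast T$ is automatic because their $(\cdot)^{[0]}[\{a\}]$ is an image of $R^{[0]}$, hence Galois-stable by $R$-compatibility; the second condition, in the equivalent form $(R\compbox T)^{[0]}[Y]=(R\compbox T)^{[0]}[Y^{[01]}]$, follows from the formula of clause~3 together with $T^{[0]}[Y^{[01]}]=T^{[0]}[Y]$, and the case of $\ast$ is treated analogously by a direct check of the same two closure conditions. For clause~1 I would substitute the recorded identities: $(R\compbox E)^{[0]}[\{x\}]=R^{[0]}[\{x\}^{[01]}]=R^{[0]}[\{x\}]$ and $(E\compbox R)^{[0]}[\{x\}]=(R^{[0]}[\{x\}])^{[10]}=R^{[0]}[\{x\}]$, each equality being an instance of $R$-compatibility, with the two $\compdia$-identities with unit $D$ following by the order-dual argument.

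For clause~4 (associativity) I would expand both sides using Definition~\ref{def:relational composition} and clause~3. On the one hand $(R\compbox(T\compbox U))^{[0]}[\{a\}]$ unwinds directly, by applying the definition twice, to $R^{[0]}[E^{[1]}[T^{[0]}[E^{[1]}[U^{[0]}[\{a\}]]]]]$. On the other hand $((R\compbox T)\compbox U)^{[0]}[\{a\}]=(R\compbox T)^{[0]}[E^{[1]}[U^{[0]}[\{a\}]]]$, and here clause~3 is exactly what lets me evaluate $(R\compbox T)^{[0]}$ on the non-singleton set $E^{[1]}[U^{[0]}[\{a\}]]$, giving the same nested expression $R^{[0]}[E^{[1]}[T^{[0]}[E^{[1]}[U^{[0]}[\{a\}]]]]]$; the two sides therefore agree on all singletons and hence as relations, and the $\compdia$-case is identical with $E^{[0]}$ in place of $E^{[1]}$. (Clause~2 is used silently here to guarantee that $R\compbox T$ and $T\compbox U$ are themselves $E$-compatible, so that the definition and clause~3 apply to them.) In this way all four clauses reduce to the singleton calculus, with clause~3 carrying the entire analytic weight.
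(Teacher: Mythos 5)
Your proof is correct and follows essentially the same route as the paper's: item~3 carries the weight, and you establish it with exactly the paper's ingredients --- decompose $Y$ into singletons, commute $(\cdot)^{[0]}$ past unions via Lemma~\ref{lemma: basic}, use the Galois-stability of the sets $T^{[0]}[\{a\}]$ (i.e.\ $E$-compatibility of $T$ via Lemma~\ref{equivalents of I-compatible}) to move $E^{[1]}$ (resp.\ $E^{[0]}$) past the intersection, and absorb the resulting closure by $E$-compatibility of $R$. The paper writes this as one forward chain of equalities starting from $R^{[0]}[E^{[0]}[T^{[0]}[Y]]]$, whereas you isolate the two arguments $W$ and $V$ of $R^{[0]}$ and show $V=W^{[01]}$; these are the same facts in a different order. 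Item~4 is handled identically (evaluate $(R\compbox T)^{[0]}$ on the non-singleton $E^{[1]}[U^{[0]}[\{a\}]]$ via item~3), and item~1 matches the paper's unit computation. The one substantive divergence is item~2: the statement (and the paper's proof, which verifies $((R\compdia T)^{[0]}[a])^{[10]}=(R\compdia T)^{[0]}[a]$ directly from the definition using only $R$'s compatibility) assumes only that $R$ is $E$-compatible, whereas your derivation of the second closure condition from item~3 also needs $T$ to be $E$-compatible --- an assumption you declare globally. In every application in the paper all relations are $E$-compatible, so nothing is lost in practice, but as a proof of item~2 as literally stated yours establishes a weaker claim; to match the statement you would have to check that condition without invoking item~3 (and, to be fair, the paper's ``the remaining identities are proven similarly'' leaves it unclear whether that condition really follows from $R$'s compatibility alone).
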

\begin{proof}
1. For any $a\in Z$, $(R\,\compdia D)^{[0]}[a] = R^{[0]}[E^{[0]}[D^{[0]}[a]]] = R^{[0]}[E^{[0]}[E^{[1]}[a]]]= R^{[0]}[a]$, the last identity holding by the $E$-compatibility of $R$. The remaining identities are omitted.\\
2. For any $a \in Z$, 
\begin{center}
\begin{tabular}{r c l r }
   $((R\,\compdia T)^{[0]} [a])^{[10]}$& =& $({R}^{[0]}[E^{[0]}[T^{[0]} [a]]])^{[10]} $ & Definition \ref{def:relational composition} (i) \\
   &=& ${R}^{[0]}[E^{[0]}[T^{[0]} [a]]] $ & Lemma \ref{equivalents of I-compatible} \\
   &=& $(R\,\compdia T)^{[0]} [a]$.& Definition \ref{def:relational composition} (i) \\
\end{tabular}
\end{center}
The remaining identities are proven similarly.\\
3. We only prove the first statement.
\begin{center}
\begin{tabular}{r c l r}
    $R^{[0]}[E^{[0]}[T^{[0]}[Y]]]$ & =& $R^{[0]}[E^{[0]}[T^{[0]}[\bigcup_{y \in Y}y]]]$& $Y = \bigcup_{y\in Y}y$\\
    &=& $R^{[0]}[E^{[0]}[ \bigcap_{y \in Y} T^{[0]}[y]   ]]$ & Lemma \ref{lemma: basic} (v)\\
    &=& $R^{[0]}[E^{[0]}[ \bigcap_{y \in Y} E^{[1]}[E^{[0]}[T^{[0]}[y]   ]]]]$ & $E$-compatibilty of $T$\\
    &=& $R^{[0]}[E^{[0]} [E^{[1]}[ \bigcup_{y \in Y} E^{[0]}[T^{[0]}[y]]]]]$ & Lemma \ref{lemma: basic} (v)\\
    &=& $R^{[0]} [\bigcup_{y \in Y} E^{[0]}[T^{[0]}[y]]] $ & $E$-compatibilty of $R$\\
    &=& $ \bigcap_{y \in Y} R^{[0]}[ E^{[0]}[T^{[0]}[y]]] $ & Lemma \ref{lemma: basic} (v)\\
    &=& $ \bigcap_{y \in Y} (R \, \compdia T)^{[0]}[y] $ & Definition \ref{def:relational composition} (i) \\
    &=& $(R \, \compdia T)^{[0]}[Y] $. & Lemma \ref{lemma: basic}(v)
\end{tabular}
\end{center}
4. Immediate consequence of item 3. 
\end{proof}


\begin{remark}
\label{remark:astcomp not associative}
In general, $\ast$-composition is not associative;
let us build a counterexample to $R\ast(T\ast U) = (R\ast T)\ast U$. Let $\mathbb{G} = (Z, \Delta)$, such that $Z = \{z_1, z_2\}$. let $R,U,T\subseteq Z\times Z$ such that $R =  Z \times Z$, $U = \varnothing$, and $T = \Delta$, respectively. 
Since $E=\Delta$, every set is Galois-stable, hence all three relations are $E$-compatible. For any $x\in Z$,
\begin{center}
\begin{tabular}{rclr}
 $ R^{[0]}[(T \, \ast U)^{[0]}[x]]$ & = 
&  $R^{[0]}[T^{[0]}[U^{[0]}[x]]]$ & Definition \\
& = & $R^{[0]}[T^{[0]}[Z]]$ & $U=\varnothing$ \\
& = & $R^{[0]}[\{ b \in Z \mid \forall w(w \in Z \Rightarrow b\Delta^c w)\}]$ & $T=\Delta$ \\
& = & $R^{[0]}[\varnothing]$ &  \\
& = & $\{ b \in Z \mid \forall w(w \in \varnothing \Rightarrow bR^c w)\}$\\
& = & $Z$ &  \\
\end{tabular}
\end{center}

\begin{center}
\begin{tabular}{rclr}
 $(R\,\ast T)^{[0]}[ U^{[0]}[x]]$ 
&  = & $ (R\,\ast T)^{[0]}[Z]$ & $U=\varnothing$ \\
& = & $ \{ b \in Z \mid \forall w(w \in Z \Rightarrow b (R\,\ast T)^c w ) \}$ & Definition of $(-)^{[0]}$ \\
& = & $ \{ b \in Z \mid \forall w(w \in Z \Rightarrow b\in  R^{[0]}[ T^{[0]} [w]] ) \}$ & Definition of $\ast$\\
& = & $\bigcap_{w\in Z}R^{[0]}[ T^{[0]} [w]]$ \\
& = & $\bigcap_{w\in Z}R^{[0]}[Z\setminus\{w\}]$ & $T = \Delta$\\
& = & $R^{[0]}[\bigcup_{w\in Z}Z\setminus\{w\}]$ & Lemma \ref{lemma: basic}\\
& = & $R^{[0]}[Z]$ & \\
& = & $\{ b \in Z \mid \forall w(w \in Z \Rightarrow bR^c w)\}$ & \\
& = & $\varnothing$ &   $R = Z\times Z$\\
\end{tabular}
\end{center}
\end{remark}

\begin{remark}
In general, $\ast$-composition is not associative. It is easy to show a counterexample analogous to the one in Remark \ref{remark:astcomp not associative} which its natural counterpart in Kripke frames. To simplify notation, we will write e.g.~$R\ast T\ast U$ which should be read as $(R\ast (T\ast U))$. 
\end{remark}

\subsection{(Heterogeneous) relation algebras and their propositional languages}\label{ssec:krel and grel}
In this section, we introduce algebraic structures in the context of which  the notions of composition introduced above can be studied systematically from a universal-algebraic or category-theoretic perspective. Besides being of potential independent interest, these structures can be naturally associated with {\em  propositional languages} which are key tools to build a common ground where the first-order correspondents of modal axioms in various semantic contexts can be compared and systematically related. 
\begin{definition} 
The relation algebras associated with any graph-based frame   $\mathbb{F} = (Z, E, R_\Diamond, R_\Box)$, and any Kripke frame   $\mathbb{X} = (W, R_\Diamond, R_\Box)$ are respectively defined as follows: 
\begin{center}
$\mathbb{F}^\ast: = (\mathcal{P}(Z\times Z), E, D, R_\Diamond, R_\Diamondblack, R_{\Box}, R_\blacksquare, \compdia,  \compbox, \ast)$, 
$\quad \mathbb{X}^\ast: = (\mathcal{P}(W\times W), \Delta, R_\Diamond, R_\Diamondblack, R_{\Box}, R_\blacksquare, \circ, \star),$
\end{center}
where  $R_\Diamondblack$ and $R_\blacksquare$ denote the converse relations of $R_\Box$ and $R_\Diamond$ respectively, the binary operations $\compdia, \compbox$ and $\ast$ are defined as in Definition \ref{def:relational composition}, the binary operation $\circ$  is   the standard relational composition, and  $\star$ is   defined by the same assignment  as $\ast$.
\end{definition}

Consider the set $\mathsf{Rel}_{\mathcal{L}}: = \{\Delta, E, D, R_\Diamond, R_\Diamondblack, R_{\Box}, R_\blacksquare\}$ of constant symbols. The {\em  propositional languages}  $\mathsf{KRel}_{\mathcal{L}}$ and $\mathsf{GRel}_{\mathcal{L}}$ associated with Kripke frames (resp.\ graph-based frames) are defined by the following  recursions:
\begin{align*}
\mathsf{KRel}_{\mathcal{L}} \ni \xi :: = &\, \Delta \mid R_\Diamond  \mid R_\Diamondblack\mid R_{\Box} \mid R_\blacksquare \mid  \xi\circ \xi  \mid \xi \star\xi, \\
\mathsf{GRel}_{\mathcal{L}} \ni \alpha :: = &\, D \mid E \mid  R_\Diamond  \mid R_\Diamondblack\mid R_{\Box} \mid R_\blacksquare \mid  \alpha \, \compdia \alpha  \mid \alpha \, \compbox \alpha  \mid \alpha \,\ast\alpha,
\end{align*}
where the intended interpretation of the constant symbols on graph-based frames and on Kripke frames is the obvious one. 
Term inequalities of $\mathsf{KRel}_{\mathcal{L}}$ (resp.~$\mathsf{GRel}_{\mathcal{L}}$ ) are written as $\xi_1\subseteq \xi_2$ (resp.\ $\alpha_1 \subseteq \alpha_2)$ and are interpreted on Kripke frames (resp.\ graph-based frames) in the obvious way. In particular, 
\begin{center}
$\mathbb{X}\models \xi_1\subseteq \xi_2\quad \text{ iff } \quad\mathbb{X}^\ast \models \xi_1\subseteq \xi_2
\quad \text{and} \quad \mathbb{F}\models \alpha_1\subseteq \alpha_2\quad \text{ iff } \quad\mathbb{F}^\ast \models \alpha_1\subseteq \alpha_2.$    
\end{center}

A natural translation exists between $\mathsf{GRel}_{\mathcal{L}}$-terms and $\mathsf{KRel}_{\mathcal{L}}$-terms which is defined as follows.

\begin{definition}
\label{def:translationgrelkrel}
Let $\tau: \mathsf{GRel}_{\mathcal{L}} \to \mathsf{KRel}_{\mathcal{L}}$ be  defined as follows:
\smallskip

{{\centering
\begin{tabular}{rclcrcl}
$\tau(E)$ & $=$ & $\Delta$ &
\quad\quad\quad &
$\tau(D)$ & $=$ & $\Delta$ \\
$\tau(R_\Diamond)$ & $=$ & $R_\Diamond$ &
\quad\quad\quad &
$\tau(R_\Diamondblack)$ & $=$ & $R_\Diamondblack$ \\
$\tau(R_\Box)$ & $=$ & $R_\Box$ &
\quad\quad\quad &
$\tau(R_\blacksquare)$ & $=$ & $R_\blacksquare$ \\
$\tau(\alpha_1 \compdia \alpha_2)$ & $=$ & $\tau(\alpha_1) \circ \tau(\alpha_2)$ &
\quad\quad\quad &
$\tau(\alpha_1 \compbox \alpha_2)$ & $=$ & $\tau(\alpha_1) \circ \tau(\alpha_2)$ \\
$\tau(\alpha_1 \ast \alpha_2)$ & $=$ & $\tau(\alpha_1) \star \tau(\alpha_2)$ &
\quad\quad\quad &
$\tau(\alpha_1 \ast \alpha_2)$ & $=$ & $\tau(\alpha_1) \star \tau(\alpha_2)$.
\end{tabular}
\par}}
\end{definition}

\begin{lemma}
\label{lemma:translationgrel}
For any Kripke frame $\mathbb{X}$, any $\mathsf{GRel}_{\mathcal{L}}$-inequality $\alpha_1 \subseteq \alpha_2$, and any $\alpha \in \mathsf{GRel}_{\mathcal{L}}$
\begin{enumerate}
\item $\alpha^\mathbb{F_X} = \tau(\alpha)^\mathbb{X}$,
\item $\mathbb{F_X} \models \alpha_1 \subseteq \alpha_2 
\quad\quad
\mbox{iff}
\quad\quad
\mathbb{X} \models \tau(\alpha_1) \subseteq \tau(\alpha_2).$
\end{enumerate}
\end{lemma}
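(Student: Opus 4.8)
The plan is to prove item 1 by structural induction on the $\mathsf{GRel}_{\mathcal{L}}$-term $\alpha$, and then derive item 2 as an immediate consequence. Let me think about what each piece requires.

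For item 1, I need to show $\alpha^{\mathbb{F_X}} = \tau(\alpha)^{\mathbb{X}}$ for every $\alpha$. The key fact I'd lean on is that $\mathbb{F_X} = (\mathbb{G}_S, R_\Diamond, R_\Box)$ is built from the Kripke frame $\mathbb{X} = (S, R_\Diamond, R_\Box)$ by taking $\mathbb{G}_S = (S, \Delta)$, so the underlying graph relation is $E = \Delta$ (and hence $D = \Delta^{-1} = \Delta$).

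Base cases: For the constant symbols I compare the interpretation in $\mathbb{F_X}$ with the interpretation of the $\tau$-image in $\mathbb{X}$. For $E$ and $D$, their interpretation in $\mathbb{F_X}$ is $\Delta$, matching $\tau(E)^{\mathbb{X}} = \tau(D)^{\mathbb{X}} = \Delta^{\mathbb{X}} = \Delta$. For $R_\Diamond, R_\Box$, both frames carry the same relation $R_\Diamond = R_\Box = R$ by Definition 2.18, and $\tau$ fixes these symbols; the converse relations $R_\Diamondblack, R_\blacksquare$ are then determined identically in both structures. So every base case holds essentially by definition.

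Inductive step: Here is the heart of the matter. For the $E$-mediated compositions, since $E = \Delta$ in $\mathbb{F_X}$, I can invoke Lemma 4.4 (the statement labelled \texttt{lemma:compdia compbox delta}), which gives $R \, \compdiadelta T = R \, \compboxdelta T = R \circ T$ for relations on $\mathbb{G}_S$. Thus for $\alpha = \alpha_1 \compdia \alpha_2$, using the induction hypothesis twice,
$$
(\alpha_1 \compdia \alpha_2)^{\mathbb{F_X}} = \alpha_1^{\mathbb{F_X}} \compdiadelta \alpha_2^{\mathbb{F_X}} = \alpha_1^{\mathbb{F_X}} \circ \alpha_2^{\mathbb{F_X}} = \tau(\alpha_1)^{\mathbb{X}} \circ \tau(\alpha_2)^{\mathbb{X}} = \tau(\alpha_1 \compdia \alpha_2)^{\mathbb{X}},
$$
and identically for $\compbox$. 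For $\alpha = \alpha_1 \ast \alpha_2$, I note that $\ast$ and $\star$ are defined by literally the same assignment $(R \ast T)^{[0]}[x] = R^{[0]}[T^{[0]}[x]]$, and that with $E = \Delta$ every subset is Galois-stable so the graph-based $(\cdot)^{[0]}$ operation coincides with the Kripke-frame one; hence $\alpha_1^{\mathbb{F_X}} \ast \alpha_2^{\mathbb{F_X}} = \tau(\alpha_1)^{\mathbb{X}} \star \tau(\alpha_2)^{\mathbb{X}}$, giving the claim.

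For item 2, I apply item 1 to both $\alpha_1$ and $\alpha_2$: since $\alpha_1^{\mathbb{F_X}} = \tau(\alpha_1)^{\mathbb{X}}$ and $\alpha_2^{\mathbb{F_X}} = \tau(\alpha_2)^{\mathbb{X}}$ as actual subsets of $Z \times Z = S \times S = W \times W$, the inclusion $\alpha_1^{\mathbb{F_X}} \subseteq \alpha_2^{\mathbb{F_X}}$ holds if and only if $\tau(\alpha_1)^{\mathbb{X}} \subseteq \tau(\alpha_2)^{\mathbb{X}}$, which is exactly $\mathbb{F_X} \models \alpha_1 \subseteq \alpha_2$ iff $\mathbb{X} \models \tau(\alpha_1) \subseteq \tau(\alpha_2)$. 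The main subtlety to check carefully is the $\ast$/$\star$ case, where I must verify that the $(\cdot)^{[0]}$ operators genuinely coincide when $E = \Delta$; everything else is routine once Lemma 4.4 is in hand.
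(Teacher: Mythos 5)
Your proposal is correct and matches the paper's proof, which simply states that item 1 follows by straightforward induction on the complexity of $\alpha$ and that item 2 is an immediate consequence; you have merely spelled out that induction, correctly identifying $E=\Delta$ in $\mathbb{F_X}$ and Lemma \ref{lemma:compdia compbox delta} as the key ingredients for the composition cases.
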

\begin{proof}
Item 1 is proved by straightforward induction on the complexity of $\alpha$. Item 2 directly follows from 1.
\end{proof}

In the next section we represent the first-order correspondents of inductive mrps as term inequalities of $\mathsf{GRel}_{\mathcal{L}}$ and $\mathsf{KRel}_{\mathcal{L}}$.
Intuitively,  the various compositions considered above are used as ``abbreviations'' to achieve a more compact representation of the correspondents which is also more amenable to computation. 

\section{Correspondents of inductive modal reduction principles}
\label{sec:crisp}
With the definitions of the previous section in place, we are now in a position to give a concrete representation of the first-order correspondents of inductive mrps  on graph-based $\mathcal{L}$-frames and on  Kripke $\mathcal{L}$-frames as term-inequalities in the languages $\mathsf{GRel}_{\mathcal{L}}$ and $\mathsf{KRel}_{\mathcal{L}}$. This representation also puts us in a position to show that these two representations are naturally connected in the same way in which Kripke frames are connected to graph-based frames.  Specifically, in the following two subsections, we show that 

\begin{proposition}
\label{prop: pure inclusions crisp}
The first-order correspondent of any inductive mrp on graph-based (resp.~Kripke) frames can be represented as a term inequality in $\mathsf{GRel}_{\mathcal{L}}$ (resp.~$\mathsf{KRel}_{\mathcal{L}}$). 
\end{proposition}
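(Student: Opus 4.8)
The plan is to reduce the proposition to the explicit ALBA outputs recorded in Section \ref{ssec:mrp}, and then to show that each such output, once interpreted over the complex algebra, is literally a relational inclusion. Recall that for an inductive mrp ALBA returns a pure inequality of the form $\forall \nomj\,[t_1(\nomj) \leq t_2(\nomj)]$ (type (a)) or $\forall \cnomm\,[t_1(\cnomm)\leq t_2(\cnomm)]$ (type (b)), where each $t_i$ is a string of the connectives $\Diamond,\Diamondblack,\Box,\blacksquare$ applied to a single nominal $\nomj$ (resp.\ co-nominal $\cnomm$). By Proposition \ref{prop:generators}, quantifying over $\nomj$ (resp.\ $\cnomm$) amounts to quantifying over the join-generators $\mathbf{z}_A$ (resp.\ meet-generators $\mathbf{z}_X$), i.e.\ over $z\in Z$; so it suffices to compute $\val{t_i(\mathbf{z}_A)}$ and $\descr{t_i(\mathbf{z}_A)}$ (dually for $\mathbf{z}_X$) and to compare them.

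The core of the argument is a translation lemma, proved by induction on the length of the modal string $t$, asserting: for every such $t$ there is a $\mathsf{GRel}_{\mathcal{L}}$-term $\alpha_t$ such that, if $t$ ends in a diamond-type connective ($\Diamond$ or $\Diamondblack$) then $\descr{t(\mathbf{z}_A)} = \alpha_t^{[0]}[z]$, and if $t$ ends in a box-type connective ($\Box$ or $\blacksquare$) then $\val{t(\mathbf{z}_A)} = \alpha_t^{[0]}[z]$. The base case uses $\descr{\mathbf{z}_A} = z^{[1]} = D^{[0]}[z]$, so $\alpha$ is the unit $D$ of $\compdia$. In the inductive step I would use the semantic clauses of Definition \ref{def:graph:based:frame:and:model} together with $E$-compatibility (Lemma \ref{equivalents of I-compatible}) and the monoid laws of Lemma \ref{lemma:properties of homogeneous composition}. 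Concretely, using $\val c = E^{[0]}[\descr c]$ and $\descr c = E^{[1]}[\val c]$ for a concept $c$: prepending a diamond-type $\langle R\rangle$ to a diamond-ending $t$ gives $\descr{\langle R\rangle t(\mathbf{z}_A)} = R^{[0]}[E^{[0]}[\alpha_t^{[0]}[z]]] = (R\,\compdia\,\alpha_t)^{[0]}[z]$; prepending a box-type $[R]$ to a box-ending $t$ gives $\val{[R]t(\mathbf{z}_A)} = R^{[0]}[E^{[1]}[\alpha_t^{[0]}[z]]] = (R\,\compbox\,\alpha_t)^{[0]}[z]$; and at a switch from a diamond-block to a box-block (or vice versa) one tracks the other component, which inserts a single $\ast$, e.g.\ $\val{[R]t(\mathbf{z}_A)} = R^{[0]}[\alpha_t^{[0]}[z]] = (R\,\ast\,\alpha_t)^{[0]}[z]$ when $t$ is diamond-ending. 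Thus $\alpha_t$ is a $\compdia$/$\compbox$-composition inside each maximal block of equal type, with consecutive blocks glued by a single $\ast$; since there is exactly one $\ast$ per block boundary, the forced bracketing sidesteps the non-associativity of $\ast$ (Remark \ref{remark:astcomp not associative}).

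To finish, I would apply the translation lemma to both $t_1$ and $t_2$ and rewrite the comparison $t_1(\mathbf{z}_A) \leq t_2(\mathbf{z}_A)$ using $c\leq d$ iff $\descr d \subseteq \descr c$ iff $\val c \subseteq \val d$. When both sides are tracked by the same component this is immediate; when they are of opposite type, I would convert between the extension and the intension of a concept via $\val c = E^{[0]}[\descr c] = (E\ast\alpha)^{[0]}[z]$ and $\descr c = E^{[1]}[\val c] = D^{[0]}[\val c] = (D\ast\alpha)^{[0]}[z]$, both of which remain $\mathsf{GRel}_{\mathcal{L}}$-terms. Since $(\cdot)^{[0]}$ is antitone in its relational argument, $\beta_1^{[0]}[z]\subseteq\beta_2^{[0]}[z]$ for all $z$ is equivalent to $\beta_2 \subseteq \beta_1$ (Lemma \ref{lemma: basic}), so the universally quantified inequality collapses to a single $\mathsf{GRel}_{\mathcal{L}}$-inclusion $\alpha_1 \subseteq \alpha_2$ (up to the direction dictated by the chosen component). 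The Kripke case is the specialization $E = \Delta$: by Lemma \ref{lemma:compdia compbox delta} each $\compdia$ and $\compbox$ collapses to $\circ$ and $\ast$ to $\star$, so the identical computation produces a $\mathsf{KRel}_{\mathcal{L}}$-inclusion; alternatively one may read the result off through Definition \ref{def:translationgrelkrel} and Lemma \ref{lemma:translationgrel}.

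The step I expect to be the main obstacle is the bookkeeping in the translation lemma: keeping straight which component (extension or intension) carries the relational image at each stage, invoking exactly the right half of $E$-compatibility to absorb the closure operators $(\cdot)^{[10]}$ and $(\cdot)^{[01]}$ produced by the semantic clauses, and checking that the generator base case supplies the correct unit ($D$ for a diamond-opening block, $E$ for a box-opening block). Everything else is routine given Lemmas \ref{lemma: basic}, \ref{equivalents of I-compatible} and \ref{lemma:properties of homogeneous composition}.
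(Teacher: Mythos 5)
Your proposal is correct and follows essentially the same route as the paper: your ``translation lemma'' is precisely the combination of Definition \ref{def:graph-based rels associated with terms} with Lemma \ref{lemma:molecular polarity} (resp.\ Definition \ref{def:kripke-based rels associated with terms} with Lemma \ref{lemma:molecular kripke}), and your final step of evaluating the ALBA output at the generators and using antitonicity of $(\cdot)^{[0]}$ is exactly the derivation of the inclusions $R_{\mathsf{LA}(\psi)}\,\compdia\, R_{\phi} \subseteq R_{\chi[\mathsf{LA}(\alpha)/p]}$ and $R_{\mathsf{RA}(\phi)}\,\compbox\, R_{\psi} \subseteq R_{\zeta[\mathsf{RA}(\delta)/p]}$ in \eqref{eq: alba output a final graphs} and \eqref{eq: alba output b final graphs}. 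The only cosmetic difference is that the paper handles the Kripke case by a parallel computation with $\circ$ and $\star$ rather than by specializing $E:=\Delta$ via Lemmas \ref{lemma:compdia compbox delta} and \ref{lemma:translationgrel}, which is equally legitimate.
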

In Section \ref{ssec:shifting}, we also show that  the first-order correspondent on graph-based $\mathcal{L}$-frames of any inductive mrp is the ``shifted'' version of its first-order correspondent on Kripke $\mathcal{L}$-frames. 
\subsection{Correspondents on graph-based frames}
\label{ssec: correspondents on graph-based}
\begin{definition}\label{def:graph-based rels associated with terms}
Let us associate terms in $\mathsf{GRel}_{\mathcal{L}}$ with ${\mathcal{L}^*}$-terms of certain syntactic shapes as follows: 
\begin{enumerate}
    \item if $\varphi(!w)$ (resp.~$\psi(!w)$) is a finite (possibly empty) concatenation of diamond (resp. box) connectives, let us define $R_{\phi}\in \mathsf{GRel}_{\mathcal{L}}$ (resp.~$R_{\psi}\in \mathsf{GRel}_{\mathcal{L}}$)  by induction on $\phi$ (resp.~ $\psi$) as follows:\\
if $\phi: = y$, then $R_{\phi}: = D$; \quad 
if $\phi: = f \phi'$ with $f\in\{\Diamond, \Diamondblack\}$, then $R_{\phi}: = R_f \compdia R_{\phi'}$;\\
if $\psi: = y$, then $R_{\psi}: = E$; \quad 
if $\psi: = g \psi'$ with $g \in \{\Box,\blacksquare\}$, then $R_{\psi}: = R_g \compbox R_{\psi'}$;
\item if $\chi(!w) = \phi_1\psi_1\ldots\phi_{n_\chi}\psi_{n_\chi}(!w)$ such that each $\phi_i$ (resp. $\psi_i$) is a finite concatenation of diamond (resp.\ box) connectives  of which only $\psi_{n_\chi}$ is possibly empty, then  $R_{\chi}\in \mathsf{GRel}_{\mathcal{L}}$ is defined 
as follows: \\
if $\psi_{n_\chi}$ is empty,
$
    R_{\chi} \coloneqq R_{\varphi_1} \ast R_{\psi_1} \ast \cdots \ast  R_{\varphi_{n_\chi}};\\
$ 
if $\psi_{n_\chi}$ is nonempty,
$
    R_{\chi} \coloneqq R_{\varphi_1} \ast R_{\psi_1} \ast \cdots \ast R_{\varphi_{n_\chi}} \ast R_{\psi_{n_\chi}} \ast D;
$
\item if $\zeta(!w) = \psi_1\phi_1\ldots\psi_{n_\zeta}\phi_{n_\zeta}(!w)$ such that each $\phi_i$ (resp. $\psi_i$) is a finite concatenation of diamond (resp.\ box) connectives, of which only $\phi_{n_\zeta}$ is possibly empty, then $R_{\zeta}\in \mathsf{GRel}_{\mathcal{L}}$ is defined 
as follows: \\
if $\phi_{n_\zeta}$ is empty,
$
    R_{\zeta} \coloneqq R_{\psi_1} \ast R_{\phi_1} \ast \cdots \ast R_{\psi_{n_\zeta}}\hspace{-0.4mm};
$ \\
if $\phi_{n_\zeta}$ is nonempty,
$
    R_{\zeta} \coloneqq R_{\psi_1} \ast R_{\phi_1} \ast \cdots \ast R_{\psi_{n_\zeta}} \ast R_{\phi_{n_\zeta}} \ast E.
$
\end{enumerate}
\end{definition}
The first item of the following lemma shows that the semantic operations induced by the relations introduced  in Definition \ref{def:graph-based rels associated with terms} 
coincide with the term functions induced by their associated formulas. As is to be expected, this strong property does not fully extend to formulas of a more general shape such as $\chi$ and $\zeta$; however, a more limited version applies for singleton subsets. 
\marginnote{MP:  expand proof}
\begin{lemma}
\label{lemma:molecular polarity}
For any graph-based frame $\mathbb{F}$ based on $(Z,E)$, all terms $\varphi(!t)$, $\psi(!t)$, $\chi(!t)$, $\zeta(!t)$ as in Definition \ref{def:graph-based rels associated with terms}, all $B,Y\subseteq Z$, and all $a,x \in Z$,
\begin{enumerate}
    \item $\descr{\varphi(!t)}(t \coloneqq (B^{[10]}, B^{[1]}) =  R^{[0]}_{\varphi}[B]$ \ \ and \ \ $\val{\psi(!t)}(t \coloneqq Y = (Y^{[0]}, Y^{[01]})) = R^{[0]}_\psi[Y]$;
    \item $
    \descr{\chi[\nomj/!t]}(\nomj \coloneqq (a^{[10]}, a^{[1]})) = R^{[0]}_{\chi}[a^{[10]}]=  R^{[0]}_{\chi}[a];
    $
    \item $
    \val{\zeta[\cnomm/!t]}(\cnomm \coloneqq (x^{[0]},x^{[01]}) = R^{[0]}_{\zeta}[x^{[01]}] = R^{[0]}_{\zeta}[x].
    $
\end{enumerate}
\end{lemma}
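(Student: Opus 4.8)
The plan is to establish item~1 first, by a simultaneous induction on the number of modal connectives in $\varphi$ and $\psi$, and then to bootstrap items~2 and~3 from it by peeling off the maximal diamond- and box-blocks of $\chi$ and $\zeta$ one at a time. Throughout I would use that every subterm evaluates to a formal concept $c$, so that $\val{c} = \descr{c}^{[0]}$ and $\descr{c} = \val{c}^{[1]}$, together with the folding identities $(R \compdia T)^{[0]}[Y] = R^{[0]}[E^{[0]}[T^{[0]}[Y]]]$ and $(R \compbox T)^{[0]}[Y] = R^{[0]}[E^{[1]}[T^{[0]}[Y]]]$ from Lemma~\ref{lemma:properties of homogeneous composition}(3).

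For item~1, take $\varphi$ and induct on its number of diamonds. In the base case $\varphi = {!t}$ we have $R_\varphi = D$, and evaluating $\descr{t}$ at the concept $(B^{[10]}, B^{[1]})$ gives $B^{[1]}$, which unravels to $D^{[0]}[B]$ directly from the definitions of $D$ and $(\cdot)^{[0]}$. For the inductive step $\varphi = f\varphi'$ with $f \in \{\Diamond, \Diamondblack\}$, I would apply the valuation clause $\descr{f\varphi'(t)} = R_f^{[0]}[\val{\varphi'(t)}]$, rewrite $\val{\varphi'(t)} = \descr{\varphi'(t)}^{[0]} = E^{[0]}[\descr{\varphi'(t)}]$ since $\varphi'(t)$ is a concept, invoke the induction hypothesis $\descr{\varphi'(t)} = R_{\varphi'}^{[0]}[B]$, and fold via the $\compdia$-identity to obtain $(R_f \compdia R_{\varphi'})^{[0]}[B] = R_\varphi^{[0]}[B]$. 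The statement for $\psi$ is proved dually, inducting on boxes and folding with $\compbox$.

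For items~2 and~3 I would evaluate $\chi[\nomj/{!t}] = \phi_1\psi_1\cdots\phi_{n_\chi}\psi_{n_\chi}(\nomj)$ from the inside out, applying item~1 alternately to each diamond-block $\phi_i$ (producing an intent from the preceding extent) and each box-block $\psi_i$ (producing an extent from the preceding intent). Because each intermediate value is a concept, its extent and intent determine one another, so the recursion telescopes into a nested expression $R_{\phi_1}^{[0]}[R_{\psi_1}^{[0]}[\cdots]]$ whose innermost argument is $\{a\}$ or $a^{[1]} = D^{[0]}[a]$, according to whether $\psi_{n_\chi}$ is empty. The crucial observation is that this nested expression is exactly $R_\chi^{[0]}[a]$: since $\ast$ is defined \emph{pointwise} by $(R\ast T)^{[0]}[w] = R^{[0]}[T^{[0]}[w]]$, iterating this definition at the single point $a$ reproduces the nested expression, and the non-associativity of $\ast$ flagged in Remark~\ref{remark:astcomp not associative} does not interfere precisely because every unfolding step is evaluated at the same point $a$ rather than at an intermediate set. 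Item~3 is entirely dual, interchanging extents with intents, boxes with diamonds, $E$ with $D$, and $\compbox$ with $\compdia$.

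It then remains to identify $R_\chi^{[0]}[a]$ with $R_\chi^{[0]}[a^{[10]}]$. Here I would use that $R_\chi$ is $E$-compatible (Lemma~\ref{lemma:properties of homogeneous composition}(2)) and that $(\cdot)^{[0]}$ and $(\cdot)^{[1]}$ form a Galois connection (Lemma~\ref{lemma: basic}(2)). The inclusion $\{a\} \subseteq a^{[10]}$ gives $R_\chi^{[0]}[a^{[10]}] \subseteq R_\chi^{[0]}[a]$ by antitonicity; for the reverse, by adjunction it suffices to show $a^{[10]} \subseteq R_\chi^{[1]}[R_\chi^{[0]}[\{a\}]]$, and this holds because $R_\chi^{[1]}[R_\chi^{[0]}[\{a\}]]$ is stable under the $E$-closure (by the $E$-compatibility of $R_\chi$, via Lemma~\ref{equivalents of I-compatible}) and contains $\{a\}$, hence contains its $E$-closure $a^{[10]}$. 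I expect this last identification to be the main obstacle: it is the only place where $E$-compatibility is genuinely indispensable (the equality is false for arbitrary relations, as one sees by attempting a counterexample that then violates compatibility), and getting the interplay of the two closures right — the one associated with $E$ and the one associated with $R_\chi$ — is the delicate point. A secondary source of care is the alternation bookkeeping in the telescoping step and the correct handling of the empty-block boundary cases in Definition~\ref{def:graph-based rels associated with terms}.
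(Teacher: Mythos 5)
Your proposal is correct and follows essentially the same route as the paper, whose proof is only a two-sentence sketch: item~1 by induction on the modal connectives using Lemma~\ref{lemma:properties of homogeneous composition}(2)--(3), and items~2 and~3 by induction on the alternations of box and diamond blocks using item~1 and Definition~\ref{def:relational composition}. Your additional care about the identification $R^{[0]}_{\chi}[a^{[10]}]=R^{[0]}_{\chi}[a]$ via $E$-compatibility and the Galois connection fills in details the paper leaves implicit, but does not change the argument.
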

\begin{proof}
Item (1) can be shown straightforwardly by induction on $\varphi$ and $\psi$, using Lemma \ref{lemma:properties of homogeneous composition} (2 and 3). Items (2) and (3) can be shown  by induction on the number of alternations of box and diamond connectives, using  item (1), and Definition \ref{def:relational composition}.
\end{proof}

As discussed in Section \ref{ssec:mrp}, the ALBA output of mrps of type (a) is
\begin{center}
    $\forall \nomj\left(\mathsf{LA}(\psi)[\phi[\nomj/!y]/!u] \leq \chi[\mathsf{LA}(\alpha)[\nomj/!u]/p]\right).$
\end{center}
When interpreted on graph-based frames, the condition above translates as follows:
\begin{equation}
\label{eq: alba output a translated graphs}
\forall a \left(\descr{\chi[\mathsf{LA}(\alpha)[\nomj/!u]/p] }(\nomj: =(a^{[10]},a^{[1]}))\subseteq \descr{\mathsf{LA}(\psi)[\phi[\nomj/!y]/!u]}(\nomj: =(a^{[10]}, a^{[1]})\right).
\end{equation}
Since $\mathsf{LA}(\psi)[\phi(!y)/!u]$ and $\chi[\mathsf{LA}(\alpha)/p]$ are of the shape required by
by items (1) and (2) of Lemma \ref{lemma:molecular polarity} respectively, the following identities hold:
\begin{center}
\begin{tabular}{rcccl}
$\descr{\mathsf{LA}(\psi)[\phi[\nomj/!y]/!u]}(\nomj: = (a^{[10]},a^{[1]} )$ & $=$ & $R_{\mathsf{LA}(\psi)[\phi/!u]}^{[0]}[a^{[10]}]$ & $=$ & $(R_{\mathsf{LA}(\psi)} \,\compdia R_{\phi})^{[0]}[a]$. \\
$\descr{\chi[\mathsf{LA}(\alpha)[\nomj/!u]/p] }(\nomj:= (a^{[10]},a^{[1]}))$ & $=$ & $R_{\chi[\mathsf{LA}(\alpha)/p]}^{[0]}[a^{[10]}]$ & $=$ & $R_{\chi[\mathsf{LA}(\alpha)/p]}^{[0]}[a]$.
\end{tabular}
\end{center}
Hence, \eqref{eq: alba output a translated graphs} can be rewritten as follows:
\begin{equation}
\label{eq: alba output a final graphs}
\forall a \left(R_{\chi[\mathsf{LA}(\alpha)/p]}^{[0]}[a]\subseteq (R_{\mathsf{LA}(\psi)} \,\compdia R_{\phi})^{[0]}[a]\right) \quad \text{i.e.} \quad R_{\mathsf{LA}(\psi)} \,\compdia R_{\phi} \subseteq R_{\chi[\mathsf{LA}(\alpha)/p]}
\end{equation}

Similarly, the ALBA output of mrps of type (b) can be represented as follows:
\begin{center}
$\forall \cnomm\left(\zeta[\mathsf{RA}(\delta)[\cnomm/!u]/p] \leq \mathsf{RA}(\phi)[\psi[\cnomm/!y]/!u]\right),$
\end{center}
and, when interpreted on graph-based frames, the condition translates as follows:
\begin{equation}
\label{eq: alba output b translated graphs}
\forall x\left (\val{\zeta[\mathsf{RA}(\delta)[\cnomm/!u]/p]}(\cnomm: = (x^{[0]},x^{[01]})) \subseteq \val{\mathsf{RA}(\phi)[\psi[\cnomm/!y]/!u]}(\cnomm: = (x^{[0]},x^{[01]}))\right ).
\end{equation}
By items (3) and (1) of Lemma \ref{lemma:molecular polarity}, 
\vspace{-4mm}
\begin{center}
\begin{tabular}{rcccl}
     $\val{\mathsf{RA}(\phi)[\psi[\cnomm/!y]/!u]}(\cnomm:= (x^{[0]},x^{[01]}))$ & $=$ &  $R_{\mathsf{RA}(\phi)[\psi/!u]}^{[0]}[x^{[01]}]$ & $=$ & $(R_{\mathsf{RA}(\phi)} \,\compbox R_{\psi})^{[0]}[x]$. \\
$\val{\zeta[\mathsf{RA}(\delta)[\cnomm/!u]/p] }(\cnomm:= (x^{[0]},x^{[01]}))$ &  $=$ &  $R_{\zeta[\mathsf{RA}(\delta)/p]}^{[0]}[x^{[01]}]$ & $=$ & $R_{\zeta[\mathsf{RA}(\delta)/p]}^{[0]}[x]$.
\end{tabular}
\end{center}
Therefore, we can rewrite \eqref{eq: alba output b translated graphs} as follows:
\begin{equation}
\label{eq: alba output b final graphs}
\forall x \left( R_{\zeta[\mathsf{RA}(\delta)/p]}^{[0]}[x]\subseteq (R_{\mathsf{RA}(\phi)} \,\compbox R_{\psi})^{[0]}[x]\right) \quad \text{i.e.} \quad R_{\mathsf{RA}(\phi)} \,\compbox R_{\psi} \subseteq R_{\zeta[\mathsf{RA}(\delta)/p]}.
\end{equation}

Finally, notice that, if the mrp is also analytic, the shape of $\chi(p)$ (resp.\ $\zeta(p)$) simplifies to $\chi(p) = \phi_{n_\chi} \psi_{n_\chi}(p)$ with $n_\chi = 1$ and $\psi_{n_\chi}$ empty 
(resp.\ $\zeta(p) = \psi_{n_\zeta} \phi_{n_\zeta}(p)$ with $n_\zeta = 1$ and $\phi_{n_\zeta}$ empty). 
Hence, \eqref{eq: alba output a final graphs}  and \eqref{eq: alba output b final graphs}  simplify to the following inclusions:
\begin{equation}
\label{eq final analytic}
R_{\mathsf{LA}(\psi)} \,\compdia R_{\phi} \subseteq R_{\phi_{n_\chi}}\, \compdia R_{\mathsf{LA}(\alpha)} \quad\quad R_{\mathsf{RA}(\phi)} \,\compbox R_{\psi} \subseteq R_{\psi_{n_\zeta}}\, \compbox R_{\mathsf{RA}(\delta)}.
\end{equation}

\begin{example}
\label{ex: p leq diamond box p}
The mrp $p\leq \Diamond \Box p$  is inductive of type (a), where $\phi (y): =  y$, and $\alpha (p): = p$, hence $\mathsf{LA}(\alpha)(u): = u$, and $\psi(x): =  x$, hence $\mathsf{LA}(\psi)(v): =  v$, and $\chi(p) := \Diamond \Box p$. Then the ALBA output of this inequality is 
\begin{center}
\begin{tabular}{rll}
&$\forall \nomj\left([[\nomj/y]/v] \leq \Diamond\Box[[\nomj/u]/p]\right),
$& \\
iff & $\forall \nomj\left(\nomj \leq \Diamond \Box\nomj\right)$ & \\
iff & $\forall \nomj \left(\descr{\Diamond \Box\nomj } \subseteq \descr{\nomj}\right)$ & (definition of order on polarities)\\
iff& $\forall a \left(R_{\Diamond}^{[0]}[R_{\Box}^{[0]}[E^{[1]}[a]]]\subseteq E^{[1]}[a]\right)$& (standard translation)\\
iff& $\forall a \left((R_{\Diamond}\, \ast R_{\Box}\, \ast  D)^{[0]}[a]\subseteq ({D})^{[0]}[a]\right)$& (Definition \ref{def:relational composition})\\
iff& $D \subseteq R_{\Diamond}\, \ast R_{\Box}\, \ast D.$ & (Lemma \ref{lemma: basic})
\end{tabular}
\end{center}
\end{example}

\begin{example}
The mrp $\Box\Diamond p \leq \Box \Diamond\Diamond p $  is inductive of shape (b) 
with $\phi(y): = y$, hence $\mathsf{RA}(\phi)(v): = v$ and $\psi (x): = \Box x$, and $\delta(p): = \Diamond\Diamond p$, hence $\mathsf{RA}(\delta)(u): = \blacksquare\blacksquare u$, and $\chi(p): = \Box\Diamond p$.
\begin{center}
			\begin{tabular}{r l l l}
				&$\forall p  [\Box\Diamond p \leq \Box \Diamond\Diamond p ]$\\
				iff& $\forall \cnomm  [\Box\Diamond \blacksquare\blacksquare \cnomm \le \Box \cnomm]$
				& ALBA output \\		
				i.e. &$\forall x\left (R^{[0]}_{\Box}[R^{[0]}_{\Diamond}[R_{\blacksquare}^{[0]}[ E^{[1]}[R_{\blacksquare}^{[0]}[x^{\downarrow \uparrow}] ]]]\subseteq R_{\Box}^{[0]}[x^{\downarrow \uparrow}]\right )$\\
	iff&$\forall x\left ( R^{[0]}_{\Box}[R^{[0]}_{\Diamond}[R_{\blacksquare}^{[0]}[E^{[1]}[ R_{\blacksquare}^{[0]}[x] ]]]]\subseteq R_{\Box}^{[0]}[x]\right )$ &  ($R_{\Box}$ and $R_{\Diamond}$ $E$-compatible)\\
	iff&$\forall x\forall a\left ( a\in  R^{[0]}_{\Box}[R^{[0]}_{\Diamond}[R_{\blacksquare}^{[0]}[E^{[1]}[ R_{\blacksquare}^{[0]}[x] ]]]]\Rightarrow a\in R_{\Box}^{[0]}[x]\right )$ &  \\
	
	iff&$\forall x\forall a\left ( a (R_{\Box}\, \ast R_{\Diamond}\, \ast (R_{\blacksquare}\, \compbox R_{\blacksquare}) )^c x\Rightarrow a R_{\Box}^c x\right )$ &  \\
	iff&$ R_{\Box} \subseteq R_{\Box}\, \ast R_{\Diamond}\, \ast (R_{\blacksquare}\, \compbox R_{\blacksquare}) $. &  \\
				\end{tabular}
		\end{center}
\end{example}

\subsection{Correspondents on Kripke frames}
\label{ssec: correspondents on kripke}
In the present subsection, which is an adaptation of \cite[Section 4.2]{conradie2022modal}, we specialize the discussion of the previous subsection to the environment of Kripke frames, so as to obtain a specific representation of the first-order correspondents of inductive mrps on Kripke frames as pure inclusions of binary relations on Kripke frames.  

\begin{definition}\label{def:kripke-based rels associated with terms}
Let us associate terms in $\mathsf{KRel}_{\mathcal{L}}$ with ${\mathcal{L}}^*$-terms of certain syntactic shapes as follows:
\begin{enumerate}
    \item if $\varphi(!w)$ (resp.~$\psi(!w)$) is a finite (possibly empty) concatenation of diamond (resp. box) connectives, let us define $R_{\phi}\in \mathsf{KRel}_{\mathcal{L}}$ (resp.~$R_{\psi}\in \mathsf{KRel}_{\mathcal{L}}$)  by induction on $\phi$ (resp.~$\psi$) as follows:\\
if $\phi: = y$, then $R_{\phi}: = \Delta$; \quad 
if $\phi: = f\phi'$ with $f \in \{\Diamond, \Diamondblack \}$, then $R_{\phi}: = R_f\circ R_{\phi'}$.\\
if $\psi: = y$, then $R_{\psi}: = \Delta$; \quad 
if $\psi: = g\psi'$ with $g \in \{\Box,\blacksquare\}$, then $R_{\psi}: = R_g\circ R_{\psi'}$.
\item if $\chi(!w) = \phi_1\psi_1\ldots\phi_{n_\chi}\psi_{n_\chi}(!w)$ such that each $\phi_i$ (resp. $\psi_i$) is a finite concatenation of diamond (resp.\ box) connectives, of which only $\psi_{n_\chi}$ is possibly empty. Let us define $R_{\chi}\in \mathsf{KRel}_{\mathcal{L}}$ by induction on $n_\chi$ as follows: \\
if $\psi_{n_\chi}$ is empty,
$
    R_{\chi} \coloneqq R_{\varphi_1} \star R_{\psi_1} \star \cdots \star R_{\varphi_{n_\chi}};
$\\
if $\psi_{n_\chi}$ is nonempty,
$
    R_{\chi} \coloneqq R_{\varphi_1} \star R_{\psi_1} \star \cdots \star R_{\varphi_{n_\chi}} \star R_{\psi_{n_\chi}} \star \Delta
$ .
\item if $\zeta(!w) = \psi_1\phi_1\ldots\psi_{n_\zeta}\phi_{n_\zeta}(!w)$ such that each $\phi_i$ (resp. $\psi_i$) is a finite concatenation of diamond (resp.\ box) connectives, of which only $\phi_{n_\zeta}$ is possibly empty. Let us define $R_{\zeta}\in \mathsf{KRel}_{\mathcal{L}}$ by induction on $n_\zeta$ as follows: \\
if $\phi_{n_\zeta}$ is empty,
$
    R_{\zeta} \coloneqq R_{\psi_1} \star R_{\phi_1} \star \cdots \star R_{\psi_{n_\zeta}};
$\\
if $\phi_{n_\zeta}$ is nonempty,
$
    R_{\zeta} \coloneqq R_{\psi_1} \star R_{\phi_1} \star \cdots \star R_{\psi_{n_\zeta}} \star R_{\phi_{n_\zeta}} \star \Delta
$ .
\end{enumerate}
\end{definition}
The  next lemma is a Kripke-frame analogue of Lemma \ref{lemma:molecular polarity}. 

\begin{lemma}
\label{lemma:molecular kripke}
For any Kripke frame $\mathbb{X}$ based on $W$, all terms $\varphi(!w)$, $\psi(!w)$, $\chi(!w)$, and $\zeta(!w)$ as in Definition \ref{def:kripke-based rels associated with terms}, any $S\subseteq W$, and any $a,x \in W$,
\begin{enumerate}
    \item $\val{\varphi(!w)}(w \coloneqq S) = \langle R_{\varphi} \rangle S \quad \text{and} \quad \val{\psi(!w)}(w \coloneqq S) = [R_\psi]S.$
    \item $\val{\chi[\nomj/!w]}(\nomj \coloneqq 
    \{a\}) = \langle R_{\chi} \rangle \{a\}.$
    \item $\val{\zeta[\cnomm/!w]}(\cnomm \coloneqq \{x\}^c) = [ R_{\zeta} ] \{x\}^c.$
\end{enumerate}
\end{lemma}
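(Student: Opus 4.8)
The plan is to prove each of the three items by a structural induction that mirrors the recursive definition of the relations $R_\varphi, R_\psi, R_\chi, R_\zeta$ in Definition \ref{def:kripke-based rels associated with terms}, exactly paralleling the proof of the graph-based analogue (Lemma \ref{lemma:molecular polarity}). The key simplifying observation is that on a Kripke frame $\mathbb{X}$, all the $E$-mediated machinery collapses: since $E = \Delta$, Lemma \ref{lemma:compdia compbox delta} tells us $\compdia, \compbox$ and $\ast$ all reduce to ordinary composition $\circ$ (and $\star$), and the Galois maps $(\cdot)^{[0]}, (\cdot)^{[1]}$ associated with $\Delta$ reduce to complementation. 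Concretely, I would record at the outset that for any $S \subseteq W$ and any relation $R$, the interpretation of $\langle R\rangle$ and $[R]$ from Section \ref{ssec:notation} gives $\langle R\rangle S = R^{-1}[S]$ and $[R]S = (R^{-1}[S^c])^c$, and that $\langle R\circ T\rangle = \langle R\rangle\langle T\rangle$ and $[R\circ T] = [R][T]$ by the usual composition law for modal operators.

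For item (1), I would do induction on the number of modal connectives in $\varphi$ (resp.\ $\psi$). The base case $\varphi = w$ gives $R_\varphi = \Delta$ and $\val{w}(w \coloneqq S) = S = \langle\Delta\rangle S$, and similarly for $\psi$ with $[\Delta]S = S$. For the inductive step $\varphi = f\varphi'$ with $f \in \{\Diamond, \Diamondblack\}$, the valuation clause gives $\val{f\varphi'}(w\coloneqq S) = \langle R_f\rangle\val{\varphi'}(w \coloneqq S) = \langle R_f\rangle \langle R_{\varphi'}\rangle S = \langle R_f \circ R_{\varphi'}\rangle S = \langle R_\varphi\rangle S$, using the induction hypothesis and the composition law; the box case is dual, using $[R_g][R_{\psi'}] = [R_g\circ R_{\psi'}]$. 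Here I would need the fact that $\Diamondblack$ and $\blacksquare$ are interpreted via $R_\Diamondblack$ and $R_\blacksquare$ exactly as $\Diamond, \Box$ are interpreted via their relations, which follows since on Kripke frames these are the genuine converse/adjoint relations and the clauses are the standard ones.

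For items (2) and (3), I would proceed by induction on the number $n_\chi$ (resp.\ $n_\zeta$) of alternations between box-blocks and diamond-blocks, using item (1) to handle each homogeneous block and Definition \ref{def:relational composition} (specialized to $\star$ on Kripke frames) to splice consecutive blocks together. The one genuine subtlety — and the step I expect to be the main obstacle — is the interaction at the \emph{join} between a diamond-block and a box-block, which is precisely why the $\star$ (rather than $\circ$) composition appears in the clauses for $R_\chi$ and $R_\zeta$, and why items (2) and (3) hold only for the singleton $\{a\}$ (resp.\ its complement $\{x\}^c$) rather than for arbitrary $S$. Concretely, when we pass from $\langle R_{\varphi_i}\rangle [R_{\psi_i}]\cdots$ across the alternation, the composite is no longer a plain modal operator but the pseudo-composition $\star$, and one must verify that evaluating at the distinguished point $\{a\}$ (resp.\ $\{x\}^c$) makes the identity $\val{\chi[\nomj/!w]}(\nomj \coloneqq \{a\}) = \langle R_\chi\rangle\{a\}$ go through, mirroring the use of the singleton hypothesis in Lemma \ref{lemma:molecular polarity}. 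I would track carefully how the trailing $\star\Delta$ factor (present exactly when the final homogeneous block is nonempty) accounts for the terminal box/diamond block, and invoke Lemma \ref{lemma: basic} together with the defining equation $(R\star S)^{[0]}[x] = R^{[0]}[S^{[0]}[x]]$ (Definition \ref{def:relational composition}, interpreted on $\Delta$) to close the induction. Since everything here is the $E\coloneqq\Delta$ specialization of the already-established graph-based Lemma \ref{lemma:molecular polarity}, an alternative and cleaner route would be to derive the statement directly from that lemma via Lemma \ref{lemma:translationgrel} and the isomorphism $\mathbb{X} \cong \mathbb{X_{F_X}}$ of Proposition \ref{prop:fxf_xfx}, which I would mention as a shortcut but probably not pursue in full detail.
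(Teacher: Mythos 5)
Your proposal is correct and follows essentially the same route as the paper: item (1) by structural induction on the homogeneous terms, and items (2) and (3) by induction on the number of alternations, splicing blocks via item (1), the identity $(R\star S)^{[0]}[x]=R^{[0]}[S^{[0]}[x]]$, the fact that $\Delta^{[0]}[a]=\{a\}^c$, and the translation between $\langle\cdot\rangle,[\cdot]$ and $(\cdot)^{[0]}$ given in Lemma \ref{lemma:properties of square bracket superscript}. The paper carries out exactly the computation you sketch for the $n=1$ case with $\psi_1$ nonempty and omits the rest as similar, so your treatment of the alternation join and the trailing $\star\,\Delta$ factor matches its argument.
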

\begin{proof}
Item (1) is proved by induction on the complexity of $\varphi$ and $\psi$ respectively.
As to item (2), we proceed by induction on the number of alternations of boxes and diamonds in $\chi$. If $\chi = w$, $R_\chi = \Delta$, so the statement follows trivially. For the inductive case, $\chi\neq x$, and then $\chi = \varphi_1 \psi_1 \cdots \varphi_n \psi_n(!x)$ such that $n\geq 1$, and the $\varphi_i$ (resp.\ $\psi_i$) are finite, nonempty, concatenations of diamond (resp.\ box) operators, except for $\psi_n$ which is possibly empty. If $n=1$ and $\psi_1$ empty, then $R_\chi = R_{\varphi_1}$, hence the statement follows from item (1). If $n=1$ and $\psi_1(!z)$ is not empty, then, $R_\chi = R_{\varphi_1} \star R_{\psi_1} \star \Delta$, so:
\begin{center}
\begin{tabular}{rlr}
& $\val{\chi[\nomj/x]}(\nomj \coloneqq \{a\})$ & \\
$=$ & $\val{\varphi_1(!y)}(y\coloneqq \val{\psi_1[\nomj/z]}(\nomj \coloneqq \{a\}))$ & \\
$=$ & $\langle R_{\varphi_1} \rangle \val{\psi_1[\nomj/z]} (\nomj \coloneqq \{a\})$ & item (1) \\
$=$ & $\langle R_{\varphi_1} \rangle [R_{\psi_1}]\{a\} $ & item (1)\\
$=$ & $\langle R_{\varphi_1} \rangle R^{[0]}_{\psi_1}[\{a\}^c] $ & Lemma \ref{lemma:properties of square bracket superscript} \\
$=$ & $ (R^{[0]}_{\varphi_1}[ R^{[0]}_{\psi_1}[\{a\}^c]])^c $ & Lemma \ref{lemma:properties of square bracket superscript} \\
$=$ & $ (R^{[0]}_{\varphi_1}[ R^{[0]}_{\psi_1}[\Delta^{[0]}[a]]])^c $ & $\Delta^{[0]}[a]=\{a\}^c$ \\
$=$ & $((R_{\varphi_1} \star R_{\psi_1} \star \Delta)^{[0]}[a] )^c$ & Definition \ref{def:relational composition} \\
$=$ & $(R_{\chi}^{[0]}[a] )^c$ & $\chi=\varphi_1\psi_1$ \\
$=$ & $\langle R_{\chi} \rangle \{a\}$ 
& Lemma \ref{lemma:properties of square bracket superscript} \\
\end{tabular}
\end{center}
The proof of the inductive step and of item (3) is similar, and is omitted.
\end{proof}

As discussed in Section \ref{ssec:mrp}, the ALBA output of mrps of type (a) is
\begin{center}
    $\forall \nomj\left(\mathsf{LA}(\psi)[\phi[\nomj/!y]/!u] \leq \chi[\mathsf{LA}(\alpha)[\nomj/!u]/p]\right).$
\end{center}
When interpreted on Kripke frames, the condition above translates as follows:
\begin{equation}
\label{eq: alba output a translated kripke}
\forall a \left( \val{\mathsf{LA}(\psi)[\phi[\nomj/!y]/!u]}(\nomj: =\{a\}) \subseteq \val {\chi[\mathsf{LA}(\alpha)[\nomj/!u]/p] }(\nomj: = \{a\})\right).
\end{equation}
Since $\mathsf{LA}(\psi)[\phi(!y)/!u]$ and $\chi[\mathsf{LA}(\alpha)/p]$ are of the shape required by
by items (i) and (ii) of Lemma \ref{lemma:molecular kripke} respectively, \eqref{eq: alba output a translated kripke} can be rewritten as follows
\begin{equation}
\label{eq: alba output a final kripke}
\forall a \left((R_{\mathsf{LA}(\psi)} \,\circ R_{\phi})^{-1}[a] \subseteq R_{\chi[\mathsf{LA}(\alpha)/p]}^{-1)}[a]\right) \quad \text{i.e.} \quad R_{\mathsf{LA}(\psi)} \,\circ R_{\phi} \subseteq R_{\chi[\mathsf{LA}(\alpha)/p]}.
\end{equation}

Similarly, the ALBA output of mrps of type (b) is
\begin{center}
$\forall \cnomm\left(\zeta[\mathsf{RA}(\delta)[\cnomm/!u]/p] \leq \mathsf{RA}(\phi)[\psi[\cnomm/!y]/!u]\right),$
\end{center}
and, when interpreted on Kripke frames, the condition translates as follows:
\begin{equation}
\label{eq: alba output b translated kripke}
\forall x\left (\val{\zeta[\mathsf{RA}(\delta)[\cnomm/!u]/p]}(\cnomm: = \{x\}^c) \subseteq \val{\mathsf{RA}(\phi)[\psi[\cnomm/!y]/!u]}(\cnomm: = \{x\}^c) \right ).
\end{equation}
By items (3) and (1) of Lemma \ref{lemma:molecular kripke}, Lemma \ref{lemma:properties of square bracket superscript}, and contraposition, we can rewrite \eqref{eq: alba output b translated kripke} as follows:
\begin{equation}
\label{eq: alba output b final kripke}
\forall x \left((R_{\mathsf{RA}(\phi)} \,\circ R_{\psi})^{-1}[x] \subseteq  R_{\zeta[\mathsf{RA}(\delta)/p]}^{-1}[x] \right) \quad \text{i.e.} \quad R_{\mathsf{RA}(\phi)} \,\circ R_{\psi} \subseteq R_{\zeta[\mathsf{RA}(\delta)/p]}.
\end{equation}

Finally, notice that, if the mrp is also analytic, the shape of $\chi(p)$ (resp.\ $\zeta(p)$) simplifies to $\chi(p) = \phi_{n_\chi} \psi_{n_\chi}(p)$ with $n_\chi = 1$ and $\psi_{n_\chi}$ empty 
(resp.\ $\zeta(p) = \psi_{n_\zeta} \phi_{n_\zeta}(p)$ with $n_\zeta = 1$ and $\phi_{n_\zeta}$ empty). 
Hence, \eqref{eq: alba output a final kripke}  and \eqref{eq: alba output b final kripke}  simplify to the following inclusions:
\begin{equation}
\label{eq final analytic 2}
 R_{\mathsf{LA}(\psi)} \,\circ R_{\phi} \subseteq R_{\phi_{n_\chi}}\, \circ R_{\mathsf{LA}(\alpha)} \quad\quad R_{\mathsf{RA}(\phi)} \,\circ R_{\psi} \subseteq R_{\psi_{n_\zeta}}\, \circ R_{\mathsf{RA}(\delta)}.
\end{equation}

\begin{example}
\label{ex: p leq diamond box p Kripke}
The mrp $p\leq \Diamond \Box p$  is inductive of type (a), where $\phi (y): =  y$, and $\alpha (p): = p$, hence $\mathsf{LA}(\alpha)(u): = u$, and $\psi(x): =  x$, hence $\mathsf{LA}(\psi)(v): =  v$, and $\chi(p) := \Diamond \Box p$. Then the ALBA output of this inequality is
\begin{center}
\begin{tabular}{rll}
& $\forall \nomj\left([[\nomj/y]/v] \leq \Diamond\Box[[\nomj/u]/p]\right)$&\\
    iff & $\forall \nomj\left(\nomj \leq \Diamond \Box\nomj\right)$ & \\
    iff & $\forall \nomj \left(\descr{\Diamond \Box\nomj } \subseteq \descr{\nomj}\right) $ & (Definition of order on polarities)\\
    iff & $\forall a \left(R_{\Diamond}^{[0]}[R_\Box^{[0]}[a^c]] \subseteq a^c \right)$ & (Standard translation)\\
    iff & $\forall a \left(R_{\Diamond}^{[0]}[R_\Box^{[0]}[\Delta^{[0]}[a]]] \subseteq \Delta^{[0]}[a] \right)$ & (Definition of $\Delta$) \\
    iff & $ \forall a \left((R_{\Diamond}\, \star R_{\Box}\, \star  \Delta )^{[0]}[a]\subseteq \Delta ^{[0]}[a]\right)$& (Definition \ref{def:relational composition})\\
    iff & $ \forall x\forall a \left(x (R_{\Diamond}\, \star R_{\Box}\, \star  \Delta )^{c}a\subseteq x \Delta ^{c}[a]\right)$& (Definition of $(\cdot)^{[0]}$)\\
    iff& $\Delta \subseteq R_{\Diamond}\, \star R_{\Box}\, \star \Delta$& (Lemma \ref{lemma: basic})
\end{tabular}
\end{center}
\end{example}

\begin{example}
The mrp $\Box\Diamond p \leq \Box \Diamond\Diamond p $  is inductive of shape (b) 
with $\phi(y): = y$, hence $\mathsf{RA}(\phi)(v): = v$ and $\psi (x): = \Box x$, and $\delta(p): = \Diamond\Diamond p$, hence $\mathsf{RA}(\delta)(u): = \blacksquare\blacksquare u$, and $\chi(p): = \Box\Diamond p$.

\begin{center}
			\begin{tabular}{r l l l}
				&$\forall p  [\Box\Diamond p \leq \Box \Diamond\Diamond p ]$\\
				iff& $\forall \cnomm  [\Box\Diamond \blacksquare\blacksquare \cnomm \le \Box \cnomm]$
				
				& ALBA output\\		
				i.e. &$\forall x\left (R^{[0]}_{\Box}[R^{[0]}_{\Diamond}[R_{\blacksquare}^{[0]}[ \Delta^{[1]}[R_{\blacksquare}^{[0]}[x^{[01]]}] ]]]\subseteq R_{\Box}^{[0]}[x^{[01]}]\right )$\\
	iff&$\forall x\left ( R^{[0]}_{\Box}[R^{[0]}_{\Diamond}[R_{\blacksquare}^{[0]}[\Delta^{[1]}[ R_{\blacksquare}^{[0]}[x] ]]]]\subseteq R_{\Box}^{[0]}[x]\right )$ & ($R_{\Box}$ and $R_{\Diamond}$ $\Delta$-compatible)\\
	iff&$\forall x\forall a\left ( a\in  R^{[0]}_{\Box}[R^{[0]}_{\Diamond}[R_{\blacksquare}^{[0]}[\Delta^{[1]}[ R_{\blacksquare}^{[0]}[x] ]]]]\Rightarrow a\in R_{\Box}^{[0]}[x]\right )$ &  \\
	
	iff&$\forall x\forall a\left ( a (R_{\Box}\, \star R_{\Diamond}\, \star (R_{\blacksquare}\, \circ R_{\blacksquare}) )^c x\Rightarrow a R_{\Box}^c x\right )$ &  \\
	iff&$ R_{\Box} \subseteq R_{\Box}\, \star R_{\Diamond}\, \star (R_{\blacksquare}\, \circ R_{\blacksquare}) $. &  \\
				\end{tabular}
		\end{center}

\end{example}
\subsection{Shifting Kripke frames to graph-based frames}
\label{ssec:shifting}
In the two previous subsections, we have shown that, both in the setting of graph-based frames and in that of Kripke frames, the first-order correspondents of inductive mrps can be expressed as pure inclusions of (compositions,  pseudo compositions,  $E$-mediated compositions of)  binary relations associated with certain terms. Besides providing a more compact way to represent first-order sentences, 
this relational representation has also specific {\em correspondence-theoretic} consequences:   
it allows us to formulate and establish a systematic  connection between the first-order correspondents, in the two semantic settings, of any inductive mrp. Establishing this connection is the focus of the present subsection. Specifically, we show that, for any inductive mrp $s(p)\leq t(p)$,  its first-order correspondent on graph-based frames is  the ``shifted version" (in a sense that will be made precise below) of its first-order correspondent on Kripke frames. That is, thanks to the relational representation, the interesting phenomenon observed in    \cite[Proposition 5]{CONRADIE2021371} in the context of polarity-based frames and in \cite[Proposition 4]{graph-based-wollic} in the context of graph-based frames  can be generalized  (and made much more explicit in the process) to all inductive mrps. 

For any $\xi \in \mathsf{KRel}_{\mathcal{L}}$ (resp.~$\gamma \in \mathsf{GRel}_{\mathcal{L}}$), let  $\xi^{\mathbb{X}}$ (resp.~$\gamma^{\mathbb{F}}$) denote the interpretation of $\xi$ (resp.~$\gamma$) in $\mathbb{X}$ (resp.~$\mathbb{F}$).
\begin{definition}\label{def:shifting}
  The $\mathsf{GRel}_{\mathcal{L}}$-inequality $\gamma_1\subseteq \gamma_2$ is the {\em shifting} of the $\mathsf{KRel}_{\mathcal{L}}$-inequality $\xi_1\subseteq \xi_2$ if, for any Kripke frame $\mathbb{X}$,
\[\xi_1^\mathbb{X} = \gamma_1^{\mathbb{F_X}} \quad \text{ and }\quad
\xi_2^\mathbb{X} = \gamma_2^{\mathbb{F_X}}.\] 
\end{definition}

\begin{example}
\label{ex:erefl etrans}
The $\mathsf{GRel}_{\mathcal{L}}$-inequality $ E \subseteq R_{\Box} $  is the shifting of the $\mathsf{KRel}_{\mathcal{L}}$-inequality $\Delta\subseteq R_{\Box}$ (encoding the reflexivity of $R_\Box$). Indeed, $\Delta\subseteq R_{\Box}$ instantiates 
$ E \subseteq R_{\Box} $ on all graph-based frames with $E\coloneqq\Delta$, which are exactly the shifted Kripke frames.

The $\mathsf{GRel}_{\mathcal{L}}$-inequality $ R_{\Box}\, \compbox R_{\Box} \subseteq R_{\Box}$ is the shifting of the $\mathsf{KRel}_{\mathcal{L}}$-inequality $R_{\Box}\circ R_{\Box}\subseteq R_{\Box}$ (encoding the transitivity of $R_\Box$) on Kripke frames. Indeed, by Lemma \ref{lemma:compdia compbox delta}, $R_{\Box}\circ R_{\Box}\subseteq R_{\Box}$ is the instantiation of $ R_{\Box}\, \compbox R_{\Box} \subseteq R_{\Box}$  when $E \coloneqq \Delta$.
\end{example}

\begin{proposition}
\label{prop:shifting_via_translation}
Every $\mathsf{GRel}_\mathcal{L}$-inequality $\gamma_1 \subseteq \gamma_2$ is the shifting of the $\mathsf{KRel}_\mathcal{L}$-inequality $\tau(\gamma_1) \subseteq \tau(\gamma_2)$ (see Definition \ref{def:translationgrelkrel}).
\end{proposition}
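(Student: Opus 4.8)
The plan is to observe that the statement is an immediate consequence of Lemma \ref{lemma:translationgrel}(1), so that no new computation is required beyond unwinding the two definitions involved.

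First I would recall what must be verified. By Definition \ref{def:shifting}, to show that $\gamma_1 \subseteq \gamma_2$ is the shifting of the $\mathsf{KRel}_{\mathcal{L}}$-inequality $\tau(\gamma_1) \subseteq \tau(\gamma_2)$, I must establish that for every Kripke frame $\mathbb{X}$,
\[\tau(\gamma_1)^{\mathbb{X}} = \gamma_1^{\mathbb{F_X}} \quad \text{and} \quad \tau(\gamma_2)^{\mathbb{X}} = \gamma_2^{\mathbb{F_X}}.\]
Here the roles of $\xi_1$ and $\xi_2$ in Definition \ref{def:shifting} are played by $\tau(\gamma_1)$ and $\tau(\gamma_2)$, respectively.

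Next, I would invoke Lemma \ref{lemma:translationgrel}(1): for any $\alpha \in \mathsf{GRel}_{\mathcal{L}}$ and any Kripke frame $\mathbb{X}$, one has $\alpha^{\mathbb{F_X}} = \tau(\alpha)^{\mathbb{X}}$. Applying this with $\alpha := \gamma_1$ and then with $\alpha := \gamma_2$ yields precisely the two equalities displayed above, which is exactly the condition required by Definition \ref{def:shifting}. This completes the verification.

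The proof therefore carries no real obstacle of its own; the substance has already been absorbed into Lemma \ref{lemma:translationgrel}(1), whose proof proceeds by induction on the complexity of $\alpha$. The base cases rest on the definition of $\tau$ on the constant symbols (noting in particular that both $E$ and $D$ are sent to $\Delta$, which matches their interpretations $E = D = \Delta$ in $\mathbb{F_X}$), while the inductive step for the binary operations relies on Lemma \ref{lemma:compdia compbox delta}, which guarantees that the $E$-mediated compositions $\compdia$ and $\compbox$ collapse to ordinary relational composition $\circ$ on every graph-based frame of the form $\mathbb{F_X}$ (where $E = \Delta$), together with the fact that $\ast$ is interpreted by the same assignment as $\star$. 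Thus the genuine work, if any, lies not in this proposition but in that lemma.
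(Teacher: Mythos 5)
Your proposal is correct and matches the paper's own proof, which simply cites item 1 of Lemma \ref{lemma:translationgrel}; you have merely spelled out the instantiation $\alpha := \gamma_1$ and $\alpha := \gamma_2$ against Definition \ref{def:shifting}. Your closing remarks about where the real inductive work lives are accurate but not needed for this proposition.
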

\begin{proof}
Immediate by item 1 of Lemma \ref{lemma:translationgrel}.
\end{proof}

\begin{theorem}
\label{thm: sahlqvist shifting}
For any inductive mrp $s(p)\leq t(p)$, the $\mathsf{GRel}_{\mathcal{L}}$-inequality encoding its first-order correspondent  on graph-based frames  is the shifting of the $\mathsf{KRel}_{\mathcal{L}}$-inequality encoding its first-order correspondent  on Kripke frames.
\end{theorem}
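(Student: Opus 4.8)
The plan is to reduce the theorem to Proposition \ref{prop:shifting_via_translation} by matching, via the translation $\tau$ of Definition \ref{def:translationgrelkrel}, the explicit relational representations of the two correspondents obtained in Sections \ref{ssec: correspondents on graph-based} and \ref{ssec: correspondents on kripke}. First I would recall that, for an inductive mrp $s(p)\leq t(p)$ of type (a), its first-order correspondent on graph-based frames is the $\mathsf{GRel}_{\mathcal{L}}$-inequality $R_{\mathsf{LA}(\psi)}\compdia R_{\phi}\subseteq R_{\chi[\mathsf{LA}(\alpha)/p]}$ of \eqref{eq: alba output a final graphs}, while its correspondent on Kripke frames is the $\mathsf{KRel}_{\mathcal{L}}$-inequality $R_{\mathsf{LA}(\psi)}\circ R_{\phi}\subseteq R_{\chi[\mathsf{LA}(\alpha)/p]}$ of \eqref{eq: alba output a final kripke}; the type-(b) case is analogous, via \eqref{eq: alba output b final graphs} and \eqref{eq: alba output b final kripke}. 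Since the terms $R_{(\cdot)}$ on the two sides are built by the parallel Definitions \ref{def:graph-based rels associated with terms} and \ref{def:kripke-based rels associated with terms}, I would temporarily write $R^{\mathsf{G}}_{\theta}\in\mathsf{GRel}_{\mathcal{L}}$ and $R^{\mathsf{K}}_{\theta}\in\mathsf{KRel}_{\mathcal{L}}$ to keep them apart.

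The key auxiliary step is to prove that $\tau(R^{\mathsf{G}}_{\theta}) = R^{\mathsf{K}}_{\theta}$ as $\mathsf{KRel}_{\mathcal{L}}$-terms, for every $\mathcal{L}^*$-term $\theta$ of the shapes $\varphi,\psi,\chi,\zeta$ covered by Definition \ref{def:graph-based rels associated with terms}. This is a straightforward induction that runs in lockstep with the two definitions: in the base cases $\tau(D)=\Delta=\tau(E)$ matches $R^{\mathsf{G}}_{y}\in\{D,E\}$ against $R^{\mathsf{K}}_{y}=\Delta$; in the recursive clauses $\tau$ sends $\compdia$ and $\compbox$ to $\circ$ and $\ast$ to $\star$ while fixing each constant $R_\Diamond,R_\Diamondblack,R_\Box,R_\blacksquare$, so that each clause of Definition \ref{def:graph-based rels associated with terms} is carried exactly onto the corresponding clause of Definition \ref{def:kripke-based rels associated with terms}, including the trailing factors $D\mapsto\Delta$ and $E\mapsto\Delta$ arising in the $\chi$- and $\zeta$-clauses.

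Granting this, applying $\tau$ to the type-(a) graph-based correspondent and using that $\tau$ is a homomorphism for the compositions gives
\[
\tau\!\left(R^{\mathsf{G}}_{\mathsf{LA}(\psi)}\compdia R^{\mathsf{G}}_{\phi}\right)=R^{\mathsf{K}}_{\mathsf{LA}(\psi)}\circ R^{\mathsf{K}}_{\phi}\quad\text{and}\quad\tau\!\left(R^{\mathsf{G}}_{\chi[\mathsf{LA}(\alpha)/p]}\right)=R^{\mathsf{K}}_{\chi[\mathsf{LA}(\alpha)/p]},
\]
so that $\tau$ maps the graph-based correspondent exactly onto the Kripke correspondent \eqref{eq: alba output a final kripke}; the type-(b) case is identical with $\compbox$ in place of $\compdia$, using \eqref{eq: alba output b final graphs} and \eqref{eq: alba output b final kripke}. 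I would then close by invoking Proposition \ref{prop:shifting_via_translation}: the $\mathsf{GRel}_{\mathcal{L}}$-correspondent $\gamma_1\subseteq\gamma_2$ is the shifting of $\tau(\gamma_1)\subseteq\tau(\gamma_2)$, and we have just identified the latter with the $\mathsf{KRel}_{\mathcal{L}}$-correspondent, which is exactly what the theorem asserts.

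The only delicate point — and the one I would check most carefully — is the clause-by-clause verification that $\tau$ carries Definition \ref{def:graph-based rels associated with terms} onto Definition \ref{def:kripke-based rels associated with terms}, in particular that the empty/nonempty case split on $\psi_{n_\chi}$ (resp.\ $\phi_{n_\zeta}$) and its trailing $D$ (resp.\ $E$) are sent to the matching split and trailing $\Delta$ on the Kripke side. Because the two definitions were deliberately designed to be isomorphic modulo the substitutions encoded by $\tau$, I do not expect a genuine obstacle here beyond this structural bookkeeping.
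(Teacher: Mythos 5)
Your proposal is correct and follows essentially the same route as the paper's own proof: both reduce the theorem to Proposition \ref{prop:shifting_via_translation} and then verify, by a clause-by-clause induction, that the translation $\tau$ of Definition \ref{def:translationgrelkrel} carries the $\mathsf{GRel}_{\mathcal{L}}$-terms of Definition \ref{def:graph-based rels associated with terms} exactly onto the $\mathsf{KRel}_{\mathcal{L}}$-terms of Definition \ref{def:kripke-based rels associated with terms}, so that the relational correspondents in \eqref{eq: alba output a final graphs}--\eqref{eq: alba output b final kripke} match up under $\tau$. Your write-up actually spells out the inductive bookkeeping (including the trailing $D\mapsto\Delta$ and $E\mapsto\Delta$ factors in the $\chi$- and $\zeta$-clauses) in more detail than the paper, which simply asserts that the induction is straightforward.
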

\begin{proof}
By Proposition \ref{prop:shifting_via_translation}, it is enough to show that the $\mathsf{KRel}_{\mathcal{L}}$-inequality equivalent to the first order correspondent of $s(p) \leq t(p)$ on Kripke frames is the translation of the $\mathsf{GRel}_{\mathcal{L}}$-inequality equivalent to the first order correspondent of $s(p) \leq t(p)$ on graph-based frames. Such correspondents are (depending on the type of $s(p)\leq t(p)$):
\smallskip

{{\centering
\begin{tabular}{|c|c|c|}
\hline
Type & $\mathsf{KRel}_{\mathcal{L}}$-inequalities & $\mathsf{GRel}_{\mathcal{L}}$-inequalities \\
\hline
(a) & $R_{\mathsf{LA}(\psi)} \circ R_{\phi} \subseteq R_{\chi[\mathsf{LA}(\alpha)/p]}$ & $R_{\mathsf{LA}(\psi)} \,\compdia R_{\phi} \subseteq R_{\chi[\mathsf{LA}(\alpha)/p]}$ \\
(b) & $R_{\mathsf{RA}(\phi)} \,\circ R_{\psi} \subseteq R_{\zeta[\mathsf{RA}(\delta)/p]}$ & $R_{\mathsf{RA}(\phi)} \,\compbox R_{\psi} \subseteq R_{\zeta[\mathsf{RA}(\delta)/p]}$ \\
\hline
\end{tabular}
\par }}
\smallskip

\noindent where the notation $R_\varphi, R_\psi, R_\chi, R_\zeta$ refers to Definition \ref{def:graph-based rels associated with terms} in the graph-based context, and Definition \ref{def:kripke-based rels associated with terms} in the Kripke one. Therefore, it is sufficient to prove that the $\mathsf{KRel}_{\mathcal{L}}$-terms associated to formulas in Definition \ref{def:kripke-based rels associated with terms} coincide with the translations of the terms associated with the same formulas in Definition \ref{def:graph-based rels associated with terms}. This follows straightforwardly by induction and by the definition of the translation (cf.\ Definition \ref{def:translationgrelkrel}).
\end{proof}

In what follows, we will sometimes refer to the   shiftings of $\mathsf{KRel}_\mathcal{L}$-inequalities corresponding to Sahlqvist mrps of type (a) (resp.~(b)) as {\em $D$-shiftings} (resp.~{\em $E$-shiftings}). 

\begin{example}
The mrp $p\leq \Diamond \Box p$  is inductive of type (a), where $\phi (y): =  y$, and $\alpha (p): = p$, hence $\mathsf{LA}(\alpha)(u): = u$, and $\psi(x): = x$, hence $\mathsf{LA}(\psi)(v): =  v$, and $\chi(p) := \Diamond \Box p$, and, as discussed in Example \ref{ex: p leq diamond box p Kripke} , its first-order correspondent on classical Kripke frames can be represented as the following $\mathsf{KRel}$-inequality: $\Delta \subseteq R_{\Diamond}\, \star R_{\Box}\, \star \Delta$, while as discussed in Example \ref{ex: p leq diamond box p}, its first-order correspondent on graph-based frames can be represented as the following $\mathsf{GRel}$-inequality: $D \subseteq R_{\Diamond}\, \ast R_{\Box}\, \ast  D$. Since $\tau(D) = \Delta$ and $\tau(R_{\Diamond}\, \ast R_{\Box}\, \ast  D) = R_{\Diamond}\, \star R_{\Box}\, \star \Delta$, By Proposition \ref{prop:shifting_via_translation}, $D \subseteq R_{\Diamond}\, \ast R_{\Box}\, \ast  D$  is a {\em shifting} (particularly $D$-{\em shifting})  of $\Delta \subseteq R_{\Diamond}\, \star R_{\Box}\, \star \Delta$.
\end{example}

We conclude the present subsection with the following example in which all the steps from the ALBA output and the generation of the inequalities are presented.
\begin{example} \label{example:BoxDiamondBox}
The mrp $\Diamond p\leq \Box\Diamond \Box p$  is inductive of type (a), where $\phi (y): = \Diamond y$, and $\alpha (p): = p$, hence $\mathsf{LA}(\alpha)(u): = u$, and $\psi(x): = \Box x$, hence $\mathsf{LA}(\psi)(v): = \Diamondblack v$, and $\chi(p) := \Diamond \Box p$. 
Let us compute the $\mathsf{GRel}_{\mathcal{L}}$-inequality representing the first order correspondent of  this mrp on graph-based frames: 
\begin{center}
    \begin{tabular}{rll}
    &$\forall \nomj\left(\Diamondblack[\Diamond[\nomj/y]/v] \leq \Diamond\Box[[\nomj/u]/p]\right)$&\\
    iff & $\forall \nomj\left(\Diamondblack\Diamond\nomj \leq \Diamond \Box\nomj\right)$& \\
    iff & $\forall \nomj \left(\descr{\Diamond \Box\nomj } \subseteq \descr{\Diamondblack\Diamond\nomj}\right) $ & (Definition of order on polarities)\\
    iff & $ \forall a\left(R_{\Diamond}^{[0]}[R_{\Box}^{[0]}[E^{[1]}[a]]]\subseteq R_{\Diamondblack}^{[0]}[E^{[1]}[R_{\Diamond}^{[0]}[a]]]\right)$ & (Standard translation)\\
    iff & $\forall a \left((R_{\Diamond}\, \ast R_{\Box}\, \ast  D)^{[0]}[a]\subseteq (R_{\Diamondblack}\,\compdia R_{\Diamond})^{[0]}[a]\right)$& (Definition \ref{def:relational composition}) \\
    iff& $ R_{\Diamondblack}\, \compdia R_{\Diamond}\subseteq R_{\Diamond}\, \ast R_{\Box}\, \ast D$. & 
    (Lemma \ref{lemma: basic})
    \end{tabular}
\end{center}
Let us now compute the $\mathsf{KRel}_{\mathcal{L}}$-inequality equivalent to the first order correspondent of the mrp on Kripke frames:
\begin{center}
    \begin{tabular}{rll}
   & $\forall \nomj\left(\Diamondblack[\Diamond[\nomj/y]/v] \leq \Diamond\Box[[\nomj/u]/p]\right)$&\\
        iff & $\forall \nomj\left(\Diamondblack\Diamond\nomj \leq \Diamond \Box\nomj\right)$& \\
        iff& $\forall \nomj \left(\descr{\Diamond \Box\nomj } \subseteq \descr{\Diamondblack\Diamond\nomj}\right)$ & (Definition of order on polarities)\\
        iff & $\forall a \left(R_{\Diamond}^{[0]}[R_{\Box}^{[0]}[\Delta^{[0]}[a]]]\subseteq R_{\Diamondblack}^{[0]}[\Delta^{[1]}[R_{\Diamond}^{[0]}[a]]]\right)$ &(Standard translation)\\
        iff& $ \forall a \left((R_{\Diamond}\, \star R_{\Box}\, \star \Delta )^{[0]}[a]\subseteq (R_{\Diamondblack}\,\circ R_{\Diamond})^{[0]}[a]\right)$ & (Definition \ref{def:relational composition})\\
        iff& $ R_{\Diamondblack}\, \circ R_{\Diamond} \subseteq R_{\Diamond}\, \star R_{\Box}\, \star  \Delta$. & (Lemma \ref{lemma: basic})
    \end{tabular}
\end{center}
The $\mathsf{GRel}$-inequality
$R_\Diamondblack \compdia R_{\Diamond}\,  \subseteq R_\Diamond \ast R_{\Box}\, \ast  D$ is the translation (and $D$-shifting) of $R_\Diamondblack \circ R_{\Diamond}\, \subseteq  R_\Diamond \star R_{\Box}\, \star \Delta$.
\end{example}


\subsection{Lifting graph-based frames to polarity-based frames}
\label{ssec:lifting}
Having established a systematic connection between the first-order correspondents of inductive mrps on Kripke frames and on graph-based frames in the previous subsection, in the present subsection we establish analogous results along the  following {\em lifting} construction of graph-based frames to polarity-based frames. 

 \begin{definition}[Lifting of graph-based frames]
  \label{def:lifting graph frames}
 For any reflexive graph $\mathbb{G} = (Z, E)$ and any graph-based frame $\mathbb{F} = (\mathbb{G}, R_\Diamond, R_\Box)$,  let the {\em lifting} of $\mathbb{F}$ be the polarity-based frame $\mathbb{P}_{\mathbb{F}}: = (\mathbb{P_G}, J_{R_\Diamond^c}, {I}_{R_{\Box}^c})$, where $\mathbb{P_G}\coloneqq (Z_A, Z_X, I_{E^c})$, $J_{R_\Diamond^c}$,  and ${I}_{R_{\Box}^c}$ are defined as in Section \ref{ssec:logics}.
 \end{definition}
 \begin{lemma}
     For any reflexive graph $\mathbb{G} = (Z, E)$ and any $E$-compatible relation $R$,
     the lifted relations $I_{R^c}$ and $J_{R^c}$ are $I_{E^c}$-compatible.
 \end{lemma}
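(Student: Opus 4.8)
The plan is to unfold $I_{E^c}$-compatibility for each of the two lifted relations and reduce it, through Lemma~\ref{lemma:round and square brackets}, to the $E$-compatibility inclusions that $R$ satisfies by assumption. Recall that a relation between the two sorts of $\mathbb{P_G} = (Z_A, Z_X, I_{E^c})$ is $I_{E^c}$-compatible exactly when the round-bracket preimages of singletons are Galois-stable, the Galois maps being $(\cdot)^\uparrow = I_{E^c}^{(1)}[\cdot]$ and $(\cdot)^\downarrow = I_{E^c}^{(0)}[\cdot]$. Since $I_{R^c} \subseteq Z_A \times Z_X$ is of $\Box$-type and $J_{R^c} \subseteq Z_X \times Z_A$ of $\Diamond$-type, the four requirements to verify are $(I_{R^c}^{(0)}[x])^{\uparrow\downarrow} \subseteq I_{R^c}^{(0)}[x]$, $(I_{R^c}^{(1)}[a])^{\downarrow\uparrow} \subseteq I_{R^c}^{(1)}[a]$, $(J_{R^c}^{(0)}[a])^{\downarrow\uparrow} \subseteq J_{R^c}^{(0)}[a]$, and $(J_{R^c}^{(1)}[x])^{\uparrow\downarrow} \subseteq J_{R^c}^{(1)}[x]$, for all $a, x \in Z$.

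First I would record, via Lemma~\ref{lemma:round and square brackets}, the identities $I_{R^c}^{(0)}[x] = (R^{[0]}[x])_A$, $I_{R^c}^{(1)}[a] = (R^{[1]}[a])_X$, $J_{R^c}^{(0)}[a] = (R^{[0]}[a])_X$, and $J_{R^c}^{(1)}[x] = (R^{[1]}[x])_A$, which express every relevant preimage as a lifted copy of a square-bracket image of $R$. I would then compute the two Galois closures on lifted sets, again by Lemma~\ref{lemma:round and square brackets} applied to $E$ (so that $(\cdot)^\uparrow = I_{E^c}^{(1)}[\cdot]$ acts as $(\cdot)^{[1]}$ and $(\cdot)^\downarrow = I_{E^c}^{(0)}[\cdot]$ as $(\cdot)^{[0]}$): for any $W \subseteq Z$ one gets $((W)_A)^{\uparrow\downarrow} = (W^{[10]})_A$ and $((W)_X)^{\downarrow\uparrow} = (W^{[01]})_X$.

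Substituting these into the four stability requirements turns them into the four inclusions $(R^{[0]}[x])^{[10]} \subseteq R^{[0]}[x]$, $(R^{[1]}[a])^{[01]} \subseteq R^{[1]}[a]$, $(R^{[0]}[a])^{[01]} \subseteq R^{[0]}[a]$, and $(R^{[1]}[x])^{[10]} \subseteq R^{[1]}[x]$, holding for all $a, x \in Z$. These are precisely the $E$-compatibility conditions on $R$ listed in Definition~\ref{def:graph:based:frame:and:model} (and characterized in Lemma~\ref{equivalents of I-compatible}), so they hold by hypothesis; moreover the reverse inclusions are automatic, since $[10]$ and $[01]$ are closure operators by Lemma~\ref{lemma: basic}(3), so each requirement is in fact an equality. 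Hence $I_{R^c}$ and $J_{R^c}$ are $I_{E^c}$-compatible, as claimed.

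The step requiring the most care — and essentially the only obstacle — is the bookkeeping of the two sorts $A$ and $X$: each round-bracket preimage lives in a definite sort, and this fixes whether the pertinent Galois closure is $(\cdot)^{\uparrow\downarrow}$ (hence $[10]$) or $(\cdot)^{\downarrow\uparrow}$ (hence $[01]$), and therefore which of the two families of $E$-compatibility inclusions of $R$ is invoked. Matching these pairings correctly is exactly what makes the four computations land on the four conditions of Definition~\ref{def:graph:based:frame:and:model} rather than on mismatched ones.
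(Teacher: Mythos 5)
Your proof is correct and follows the same route as the paper's: translate the round-bracket preimages and the Galois closures on $\mathbb{P_G}$ into square-bracket operations on the underlying graph via Lemma \ref{lemma:round and square brackets}, and observe that the resulting inclusions are exactly the $E$-compatibility inclusions on $R$. The paper's version displays only one of the computations in full and declares the rest similar, so your explicit accounting of all four stability requirements --- and of which sort each preimage lives in, hence which closure ($[10]$ versus $[01]$) and which compatibility inclusion is being invoked --- is simply a more complete write-up of the same argument.
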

 \begin{proof}
 We show just one of the two properties of $I_{E^c}$-compatibility, as the other is proved similarly.
 \smallskip
 
 {{\centering
 \begin{tabular}{rlr}
 &$ I_{E^c}^{(0)}[I_{E^c}^{(1)}[(I_{R^c}^{(0)}[x])]]$ & \\[1mm]
 $=$ & $ I_{E}^{[0]}[I_{E}^{[1]}[(I_{R}^{[0]}[x])]]$ & Definition of $(\cdot)^{[0]}$ and $(\cdot)^{[1]}$\\ 
 $=$ & $ (E^{[0]}[E^{[1]}[(R^{[0]}[x])]])_A$ & Definition of $I_E$ and $I_R$\\ 
 $=$ & $((R^{[0]}[x])^{[10]})_A$ & notational convention \\
 $\subseteq$ &  $(R^{[0]}[x])_A$ & $E$-compatibility of $R$ \\
 $=$ &  ${I_R}^{[0]}[x]$ & Definition of $I_R$ \\
 $=$ &  ${I_{R^c}}^{(0)}[x]$ & Definition of $(\cdot)^{[0]}$ \\
 \end{tabular}
 \par}}
 \end{proof}
By the lemma above,  $\mathbb{P_F}$ is a well defined polarity-based frame, and, by definition, the complex algebras $\mathbb{F_P}^+$ and $\mathbb{F}^+$ coincide. 
Notice that the lifting of Kripke frames to polarity-based frames defined in \cite[Definition 4.17]{conradie2022modal} is the composition of the shifting defined in Section \ref{ssec:shifting} and the lifting defined above.
In \cite[Section 4.1]{conradie2022modal}, the relational language $\mathsf{PRel}_\mathcal{L}$ (see \cite[Definition 3.20]{conradie2022modal}) has been introduced, which contains terms of types $\mathsf{T}_{A\times X}$, $\mathsf{T}_{X\times A}$, $\mathsf{T}_{A\times A}$ and $\mathsf{T}_{X\times X}$, and is defined by the following simultaneous recursions:
\smallskip

{{\centering
\begin{tabular}{rcl c lcl}
$\mathsf{T}_{A\times X} \ni \beta$ & $:: =$ &\, $I \mid R_{\Box}\mid \beta\, ;_I \beta \mid \rho\, ; \beta \mid \beta ; \lambda$ &\quad\quad& 
$\mathsf{T}_{A\times A}\ni \rho$ & $:: =$ &\,  $\delta \, ;\beta$ \\
$\mathsf{T}_{X\times A}\ni \delta$ & $:: = $ &\, $J \mid R_{\Diamond} \mid \delta\, ;_I \delta\mid  \lambda\, ; \delta \mid \delta ; \rho$ &&
$\mathsf{T}_{X\times X}\ni \lambda$ & $:: =$ &\, $\beta\, ; \delta $,
\end{tabular}
\par}}
\smallskip

\noindent where $I, J, R_\Diamond,$ and $ R_\Box$ are naturally interpreted as the $I, J, R_\Diamond,$ and $ R_\Box$ of a polarity-based frame, while $;$ and $;_I$ are interpreted as the composition operations in \cite[Definition 3.2]{conradie2022modal}.
Moreover, relational $\mathsf{PRel}_{\mathcal{L}}$-terms $R_\varphi, R_\psi, R_\chi, R_\zeta$ are associated with formulas $\varphi, \psi, \chi, \zeta$ as in Definition \ref{def:graph-based rels associated with terms}. 

In \cite[Section 4.1]{conradie2022modal},  the first-order correspondents of inductive mrps on polarity-based frames are shown to be  representable as term inequalities in $\mathsf{PRel}_{\mathcal{L}}$ as indicated below:
\smallskip

{{\centering 
\begin{tabular}{cl}
type (a) \quad & $R_{\chi[\mathsf{LA}(\alpha)/p]}\subseteq R_{\mathsf{LA}(\psi)} \,;_I R_{\phi}$ of type $\mathsf{T_{X\times A}}$, \\
type (b) \quad & $R_{\zeta[\mathsf{RA}(\delta)/p]}\subseteq R_{\mathsf{RA}(\phi)} \,;_I R_{\psi}$  of type $\mathsf{T_{A\times X}}$.
\end{tabular}
\par}}
\smallskip

\noindent The following definition extends the definition of lifting in \cite[Definition 4.17]{conradie2022modal} to graph-based frames.

\begin{definition}
\label{def:ij lifting}
Let $\xi_1\subseteq \xi_2$   be a $\mathsf{GRel}_{\mathcal{L}}$-inequality, and $\beta_1\subseteq \beta_2$ (resp.~$\delta_1\subseteq \delta_2$)  be a $\mathsf{PRel}_{\mathcal{L}}$-inequality of type $\mathsf{T}_{A\times X}$ (resp.~$\mathsf{T}_{X\times A}$).  
\begin{enumerate}
\setlength{\itemsep}{0.2pt}
\setlength{\parskip}{0pt}
\setlength{\parsep}{0pt}
\item The inequality $\beta_1\subseteq \beta_2$ is the $I$-{\em lifting} of $\xi_1\subseteq \xi_2$ if for any graph-based frame $\mathbb{F}$,
$\beta_1^\mathbb{P_F} = I_{(\xi_2^\mathbb{F})^c}$ and
$\beta_2^\mathbb{P_F} = I_{(\xi_1^\mathbb{F})^c}$.
\item The inequality $\delta_1\subseteq \delta_2$ is the $J$-{\em lifting} of $\xi_1\subseteq \xi_2$ if for any graph-based frame $\mathbb{F}$,
$\delta_1^\mathbb{P_F} = J_{(\xi_2^\mathbb{F})^c}$
and $\delta_2^\mathbb{P_F} = J_{(\xi_1^\mathbb{F})^c}$.
\end{enumerate}
\end{definition}

The following theorem extends \cite[Theorem 4.20]{conradie2022modal} to graph-based frames.
\begin{theorem}
\label{prop: sahlqvist lifting to polarities}
For any inductive modal reduction principle  $s(p)\leq t(p)$ of $\mathcal{L}$,
\begin{enumerate}
\item if $s(p)\leq t(p)$ is of type (a), then  the $\mathsf{PRel}_{\mathcal{L}}$-inequality of type $\mathsf{T}_{X\times A}$ encoding its first-order correspondent  on polarity-based frames  is the $J$-lifting of the $\mathsf{GRel}_{\mathcal{L}}$-inequality encoding its first-order correspondent  on graph-based frames;
\item if $s(p)\leq t(p)$ is of type (b), then the $\mathsf{PRel}_{\mathcal{L}}$-inequality of type $\mathsf{T}_{A\times X}$ encoding its first-order correspondent  on polarity-based frames  is the $I$-lifting of the $\mathsf{GRel}_{\mathcal{L}}$-inequality encoding its first-order correspondent  on graph-based frames.
\end{enumerate}
\end{theorem}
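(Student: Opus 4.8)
The plan is to reduce the statement to a term-level identity between relational terms, in the same spirit as the proof of Theorem~\ref{thm: sahlqvist shifting}. Both inputs are already available. By the computations of Section~\ref{ssec: correspondents on graph-based}, the first-order correspondent on graph-based frames of an inductive mrp $s(p)\le t(p)$ is the $\mathsf{GRel}_{\mathcal{L}}$-inequality $R_{\mathsf{LA}(\psi)}\compdia R_{\phi}\subseteq R_{\chi[\mathsf{LA}(\alpha)/p]}$ if $s(p)\le t(p)$ is of type (a), and $R_{\mathsf{RA}(\phi)}\compbox R_{\psi}\subseteq R_{\zeta[\mathsf{RA}(\delta)/p]}$ if it is of type (b); and by \cite[Section~4.1]{conradie2022modal} its correspondent on polarity-based frames is $R_{\chi[\mathsf{LA}(\alpha)/p]}\subseteq R_{\mathsf{LA}(\psi)};_I R_{\phi}$ of type $\mathsf{T}_{X\times A}$ (type (a)), resp.\ $R_{\zeta[\mathsf{RA}(\delta)/p]}\subseteq R_{\mathsf{RA}(\phi)};_I R_{\psi}$ of type $\mathsf{T}_{A\times X}$ (type (b)). Unwinding Definition~\ref{def:ij lifting}, the theorem amounts to proving, for every graph-based frame $\mathbb{F}$, the two identities $R_{\chi[\mathsf{LA}(\alpha)/p]}^{\mathbb{P_F}}=J_{(R_{\chi[\mathsf{LA}(\alpha)/p]}^{\mathbb{F}})^c}$ and $(R_{\mathsf{LA}(\psi)};_I R_{\phi})^{\mathbb{P_F}}=J_{((R_{\mathsf{LA}(\psi)}\compdia R_{\phi})^{\mathbb{F}})^c}$ in the type-(a) case, and the symmetric pair obtained by replacing $J_{(\cdot)^c}$ with $I_{(\cdot)^c}$, $\compdia$ with $\compbox$, and the relevant terms in the type-(b) case. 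The swap of $\xi_1$ and $\xi_2$ built into the $J$/$I$-lifting is exactly what absorbs the order-reversal induced by the complementation in $R\mapsto J_{R^c}$ (resp.\ $R\mapsto I_{R^c}$), so that the two inequalities are genuinely equivalent on each $\mathbb{P_F}$.

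Both families of identities are then instances of a single homomorphism property of the lifting maps, which I would isolate as the key lemma: for all $E$-compatible $R,T$ on $(Z,E)$, \[J_{(R\compdia T)^c}=J_{R^c};_I J_{T^c},\qquad I_{(R\compbox T)^c}=I_{R^c};_I I_{T^c},\] together with the analogous identities sending the $\ast$-composition of $\mathsf{GRel}_{\mathcal{L}}$ to the heterogeneous composition $;$ of $\mathsf{PRel}_{\mathcal{L}}$ (these are the identities that force the sort to change between $A\times X$, $X\times A$, $A\times A$ and $X\times X$, and hence involve the auxiliary term-types $\rho,\lambda$). Each such identity is verified by unfolding Definition~\ref{def:relational composition} and the polarity compositions $;$, $;_I$ of \cite[Definition~3.2]{conradie2022modal}, and then translating between the round-bracket maps of $\mathbb{P_F}$ and the square-bracket maps $E^{[0]},E^{[1]}$ of $\mathbb{G}$ by means of Lemma~\ref{lemma:round and square brackets}; the associativity and well-definedness recorded in Lemma~\ref{lemma:properties of homogeneous composition} ensure that the two bracketings agree. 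On the constant symbols the base cases are immediate: $D\mapsto J$ and $E\mapsto I$ (since $D=E^{-1}$ gives $xJ_{D^c}a$ iff $aE^c x$, which is the interpretation of the constant $J$ on $\mathbb{P_F}$, and dually for $E$ and $I$), while $R_\Diamond\mapsto J_{R_\Diamond^c}=R_\Diamond^{\mathbb{P_F}}$, $R_\Box\mapsto I_{R_\Box^c}=R_\Box^{\mathbb{P_F}}$ and their converse counterparts hold by Definition~\ref{def:lifting graph frames}.

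With the key lemma in hand, the two identities of the first paragraph follow by a straightforward induction on the construction of the $\mathsf{GRel}_{\mathcal{L}}$-terms $R_\varphi, R_\psi, R_\chi, R_\zeta$ of Definition~\ref{def:graph-based rels associated with terms}, matching each clause with the corresponding clause of the $\mathsf{PRel}_{\mathcal{L}}$-term definition of \cite{conradie2022modal}: the base cases are the constants just discussed, the $\compdia$/$\compbox$ clauses use the homogeneous part of the key lemma, and the $\ast$ clauses (which assemble $R_\chi$ and $R_\zeta$ out of the alternating blocks $\phi_i,\psi_i$) use its heterogeneous part. As a consistency check, one observes that this recovers \cite[Theorem~4.20]{conradie2022modal}: composing the shifting of Section~\ref{ssec:shifting} with the present lifting yields precisely the lifting of Kripke frames to polarity-based frames, as already noted after Definition~\ref{def:lifting graph frames}.

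I expect the main obstacle to lie in the heterogeneous identities carrying $\ast$ to $;$ and the attendant bookkeeping of the four sorts $A\times X$, $X\times A$, $A\times A$, $X\times X$: unlike the homogeneous $\compdia,\compbox$, the $\ast$-composition is not associative (Remark~\ref{remark:astcomp not associative}), so the inductive clauses must be arranged to respect a fixed bracketing, and one must check that the auxiliary relations of types $A\times A$ and $X\times X$ produced along the way match the interpretations of the $\mathsf{PRel}_{\mathcal{L}}$-terms $\rho$ and $\lambda$. Getting these sort-transitions and bracketings to line up exactly with the recursion in Definition~\ref{def:graph-based rels associated with terms} is the delicate, bookkeeping-heavy part of the argument; the homogeneous identities and the base cases are routine once Lemma~\ref{lemma:round and square brackets} is applied.
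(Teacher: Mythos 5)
Your proposal is correct and follows essentially the same route as the paper, which simply defers to the analogous argument for \cite[Theorem 4.20]{conradie2022modal}: represent both correspondents as term inequalities, establish that the lifting maps are homomorphic with respect to the compositions (the identity $I_{(S\,\compbox\, T)^c}=I_{S^c}\,;_I\,I_{T^c}$ you isolate is exactly the one the paper invokes in the example following the theorem), and conclude by induction on the term structure of Definition \ref{def:graph-based rels associated with terms}. Your handling of the order-reversal absorbed by the swap in Definition \ref{def:ij lifting} and of the base cases $D\mapsto J$, $E\mapsto I$ is also consistent with the paper's setup.
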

\begin{proof}
Analogous to that of \cite[Theorem 4.20]{conradie2022modal}.
\end{proof}

\begin{example}
The $\mathsf{PRel}_{\mathcal{L}}$-inequality $R_\Box \subseteq I$ is the $I$-lifting of the $\mathsf{GRel}_\mathcal{L}$-inequality 
$E \subseteq R_{\Box}$.
Indeed,  $\mathbb{F}\models E \subseteq R_\Box$ iff $I_{R_\Box^c}\subseteq I_{E^c}$ iff $\mathbb{P_F}\models R_\Box \subseteq I$ for every graph-based frame $\mathbb{F}$. These inequalities encode first-order conditions corresponding to the inductive mrp $\Box p \leq p$ on polarity-based frames and on graph-based frames, respectively.

Similarly, the $\mathsf{PRel}_\mathcal{L}$-inequality $R_\Box \subseteq R_\Box \, ;_I R_\Box$ is the $I$-lifting of $R_\Box \compbox R_\Box \subseteq R_\Box$, 
Indeed,   $\mathbb{F}\models R_\Box \compbox R_\Box \subseteq R_\Box$ iff $I_{R_\Box^c}\subseteq I_{(R_\Box \compbox R_\Box)^c}$ iff $I_{R_\Box^c}\subseteq I_{R_\Box^c} \, ;_I \, I_{R_\Box^c}$ iff $\mathbb{P_F}\models R_\Box \subseteq R_\Box \, ;_I R_\Box$ for every graph-based frame $\mathbb{F}$. The second equivalence hinges on the identity $I_{(S \compbox T)^c} = I_{S^c} \, ;_I \, I_{T^c}$ for all $S, T \subseteq Z \times Z$, which is proved similarly to \cite[Lemma 3.12]{conradie2022modal}. These inequalities encode first-order conditions corresponding to the inductive mrp $\Box p \leq \Box\Box p$.
\end{example}

\section{Applications to rough set theory}\label{sec:Applications to rough set theory} 
In the present section, we take stock of the technical  results presented in the previous sections by  further elucidating the relationship between hyperconstructive approximation spaces (cf.~Definition \ref{def:graph based approximation spaces}) and  generalized approximation spaces  
(seen as Kripke frames with $R_\Diamond =R_\Box$, cf.~Definition \ref{def: gen approx space}). 
Firstly, as is expected, the definition of hyperconstructivist approximation spaces collapses to that of generalized approximation spaces when instantiating $E\coloneqq\Delta$ (cf.\ Proposition \ref{prop:fxf_xfx}). Thus,
generalized  approximation spaces  form the proper subclass  of hyperconstructivist approximation spaces   capturing the limit case in which there is no inherent entropy and $R_\Box=R_\blacksquare=R$. On this subclass, {\em all}  $\mathsf{GRel}_{\mathcal{L}}$-inequalities collapse to  $\mathsf{KRel}_{\mathcal{L}}$-inequalities, since the relational compositions on graph-based frames (cf.~Definition \ref{def:relational composition}) reduce to the relational compositions on Kripke frames (cf.~Lemma \ref{lemma:translationgrel}). Hence, in particular, and as is expected, 
the correspondents of  Sahlqvist  mrps 
on graph-based frames,  represented as discussed in Section \ref{ssec: correspondents on graph-based},  collapse, in the sense specified in Lemma \ref{lemma:translationgrel}, to the correspondents of the same mrps on Kripke frames,  represented as discussed in Section \ref{ssec: correspondents on kripke}. 

More interesting is the relationship in the {\em converse direction}, which hinges on the notion of shifting  (cf.~Definition \ref{def:shifting}): every Sahlqvist mrp $s(p)\leq t(p)$ (cf.~Section \ref{ssec:mrp}) defines an elementary (i.e.~first-order definable) class $\mathbf{K}(s(p)\leq t(p))$ of Kripke frames and an elementary class $\mathbf{G}(s(p)\leq t(p))$ of graph-based frames. By Theorem \ref{thm: sahlqvist shifting},  when represented as a $\mathsf{GRel}_{\mathcal{L}}$-inequality,  the first-order condition  defining $\mathbf{G}(s(p)\leq t(p))$ is the {\em shifting}  of the first-order condition defining $\mathbf{K}(s(p)\leq t(p))$, represented as a $\mathsf{KRel}_{\mathcal{L}}$-inequality. 
%
The notion of shifting allows for  a systematic relationship to be established between $\mathbf{K}(s(p)\leq t(p))$ and $\mathbf{G}(s(p)\leq t(p))$ 
which is formulated in terms of their own first-order languages, and hence  {\em independently from their correspondence-theoretic link}, in the sense that this relationship can be established purely on the basis of the syntax of $\mathbf{K}(s(p)\leq t(p))$ and $\mathbf{G}(s(p)\leq t(p))$.
A similarly systematic relationship between the two correspondents of any Sahlqvist mrps on graph-based frames and on polarity-based frames, respectively, is established in Theorem \ref{prop: sahlqvist lifting to polarities} by hinging on the notion of lifting (cf.~Definition \ref{def:ij lifting}). This relationship is again independent from the correspondence-theoretic link of the two first-order conditions. 

It is easy to check that the well known modal axioms 
listed in Table \ref{table 3} are all  Sahlqvist mrps, and hence Theorem \ref{thm: sahlqvist shifting} and Theorem \ref{prop: sahlqvist lifting to polarities} apply to these axioms. Table \ref{table 3} also reports the relational correspondents of these mrps on Kripke frames, graph-based frames, and  polarity-based frames. It is easy to verify that, for each mrp in the table (they  are all analytic inductive, and hence of both types (a) and (b)), its relational correspondent on graph-based frames when considering it as type (a) (resp.~(b)) is the $D$-shifting (resp.~$E$-shifting) of its relational correspondent on Kripke frames, and its relational correspondent on polarity-based frames when considering it as type (a) (resp.~(b)) is the $J$-lifting (resp.~$I$-lifting) of its relational correspondent on graph-based frames.  
\begin{table}[!!h] \label{table:properties}
\begin{center}
\begin{tabular}{|c|c|c|c|c|}
\hline
\textbf{Property} & \textbf{Axiom}&  \textbf{$\mathsf{KRel}_{\mathcal{L}}$} correspondent & \textbf{$\mathsf{GRel}_{\mathcal{L}}$} correspondent & \textbf{$\mathsf{PRel}_{\mathcal{L}}$} correspondent \\
\hline
\multirow{4}{*}{\textbf{Reflexivity}}  & \multirow{2}{*}{$p\leq \Diamond p$} &\cellcolor[HTML]{EFEFEF} $\Delta \subseteq
R_{\Diamond}$  &\cellcolor[HTML]{EFEFEF}  $D \subseteq R_{\Diamond}  $ &\cellcolor[HTML]{EFEFEF}  $ R_\Diamond \subseteq J$  \\ \cline{3-5}
& & $\Delta\subseteq R_{\blacksquare}$ & $E \subseteq R_{\blacksquare} $ & $R_{\blacksquare} \subseteq I$ \\ \cline{2-5}
& \multirow{2}{*}{$\Box p\leq p$ } &\cellcolor[HTML]{EFEFEF} $\Delta\subseteq R_{\Diamondblack}$ &\cellcolor[HTML]{EFEFEF} $D \subseteq R_{\Diamondblack}$ &\cellcolor[HTML]{EFEFEF} $R_{\Diamondblack} \subseteq J$\\ \cline{3-5}
& &$\Delta \subseteq R_{\Box}$  & $E \subseteq R_{\Box}  $ & $R_\Box \subseteq I$ \\ \hline
\multirow{4}{*}{\textbf{Transitivity}}  & \multirow{2}{*}{$\Diamond\Diamond p\leq \Diamond p$} &\cellcolor[HTML]{EFEFEF} $R_{\Diamond}\circ R_{\Diamond}\subseteq R_{\Diamond}$  &\cellcolor[HTML]{EFEFEF} $  R_{\Diamond}\, \compdia \, R_{\Diamond} \subseteq R_{\Diamond}$ &\cellcolor[HTML]{EFEFEF} $   R_{\Diamond} \subseteq R_\Diamond \, ;_I R_\Diamond$ \\ \cline{3-5} 
&&$R_{\blacksquare}\circ R_{\blacksquare}\subseteq R_{\blacksquare}$& $R_{\blacksquare}\, \compbox \, R_{\blacksquare} \subseteq  R_{\blacksquare}$ & $   R_{\blacksquare} \subseteq R_\blacksquare \, ;_I R_\blacksquare$ \\
\cline{2-5}
& \multirow{2}{*}{$ \Box p\leq \Box\Box p$} &\cellcolor[HTML]{EFEFEF} $R_{\Diamondblack}\circ R_{\Diamondblack}\subseteq R_{\Diamondblack}$&\cellcolor[HTML]{EFEFEF} $ R_{\Diamondblack}\, \compdia \, R_{\Diamondblack}  \subseteq R_{\Diamondblack}$&\cellcolor[HTML]{EFEFEF} $ R_{\Diamondblack} \subseteq R_{\Diamondblack} \, ;_I R_{\Diamondblack}$\\ \cline{3-5} 
&& $R_{\Box}\circ R_{\Box}\subseteq R_{\Box}$  & $ R_{\Box}\, \compbox \, R_{\Box} \subseteq R_{\Box}$ & $R_\Box \subseteq R_\Box \, ;_I  R_\Box$\\
\hline
\multirow{4}{*}{\textbf{Symmetry}} & \multirow{2}{*}{$p \leq \Box\Diamond p$} &\cellcolor[HTML]{EFEFEF} $R_\Diamondblack \subseteq R_\Diamond$&\cellcolor[HTML]{EFEFEF} $R_\Diamondblack \subseteq R_\Diamond  $&\cellcolor[HTML]{EFEFEF} $R_\Diamond \subseteq R_\Diamondblack $\\ \cline{3-5}  
&&$R_{\Box}\subseteq R_{\blacksquare}$ & $R_{\Box}\subseteq  R_{\blacksquare}$ & $R_{\blacksquare} \subseteq R_{\Box} $\\
\cline{2-5}
&\multirow{2}{*}{$\Diamond \Box p \leq p$ }&\cellcolor[HTML]{EFEFEF} $R_{\Diamond}\subseteq R_{\Diamondblack}$ &\cellcolor[HTML]{EFEFEF} $R_{\Diamond} \subseteq R_{\Diamondblack} $ &\cellcolor[HTML]{EFEFEF} $R_{\Diamondblack} \subseteq R_{\Diamond} $\\
\cline{3-5}  
&&$R_{\blacksquare}\subseteq R_{\Box}$ & $R_{\blacksquare} \subseteq R_{\Box} $ & $R_\Box \subseteq R_\blacksquare$\\
\hline
\multirow{2}{*}{\textbf{Seriality }} & \multirow{2}{*}{$\Box p \leq \Diamond p$} &\cellcolor[HTML]{EFEFEF} $\Delta \subseteq R_\Diamond \circ R_\Diamondblack$ &\cellcolor[HTML]{EFEFEF} $D \subseteq R_\Diamond \compdia R_\Diamondblack  $ &\cellcolor[HTML]{EFEFEF} $R_\Diamond \, ;_I R_\Diamondblack \subseteq J $\\ \cline{3-5}
& & $\Delta\subseteq R_{\Box}\circ R_{\blacksquare}$ &  $E \subseteq R_{\Box}\, \compbox \, R_{\blacksquare}$  & $R_\Box \, ;_I R_\blacksquare \subseteq I $
 \\ 
\hline
\multirow{2}{*}{\textbf{Partial functionality}} &\multirow{2}{*}{$\Diamond p \leq \Box p$} &\cellcolor[HTML]{EFEFEF} $R_{\Diamondblack}\circ R_{\Diamond}\subseteq \Delta$ &\cellcolor[HTML]{EFEFEF} $ R_{\Diamondblack}\, \compdia \, R_{\Diamond}\subseteq D$ &\cellcolor[HTML]{EFEFEF} $ J \subseteq  R_{\Diamondblack} \, ;_I R_{\Diamond}$
\\ \cline{3-5}
& & $R_{\blacksquare}\circ R_{\Box}\subseteq \Delta$ &  $R_{\blacksquare}\, \compbox \, R_{\Box}\subseteq E$  & $ I \subseteq  R_{\blacksquare} \, ;_I R_{\Box}$\\ \hline
\multirow{4}{*}{\textbf{Euclideanness}} & \multirow{2}{*}{$\Diamond p \leq \Box\Diamond p$}
&\cellcolor[HTML]{EFEFEF}$R_\Diamondblack \circ R_\Diamond \subseteq R_\Diamond$ &\cellcolor[HTML]{EFEFEF}  $R_\Diamondblack\, \compdia \,    R_\Diamond\subseteq R_\Diamond $&\cellcolor[HTML]{EFEFEF}  $R_\Diamond \subseteq R_\Diamondblack \, ;_I    R_\Diamond $\\ 
\cline{3-5}  
&&$R_{\blacksquare}\circ R_{\Box}\subseteq R_{\blacksquare}$ & $R_{\blacksquare}\, \compbox \, R_{\Box} \subseteq R_{\blacksquare}  $&$R_\blacksquare \subseteq R_\blacksquare \, ;_I    R_\Box $\\
\cline{2-5}
&\multirow{2}{*}{$\Diamond\Box p\leq \Box p$} &\cellcolor[HTML]{EFEFEF}$R_\Diamondblack \circ R_\Diamond \subseteq R_\Diamondblack$ &\cellcolor[HTML]{EFEFEF}  $R_\Diamondblack\, \compdia \,    R_\Diamond  \subseteq R_\Diamondblack$&\cellcolor[HTML]{EFEFEF}  $R_\Diamondblack \subseteq R_\Diamondblack \, ;_I R_\Diamond$\\ 
\cline{3-5}  
&&$R_\blacksquare \circ R_\Box \subseteq R_\Box$ & $ R_\blacksquare\, \compbox \,  R_\Box\subseteq R_\Box $ &$R_\Box \subseteq R_\blacksquare \, ;_I R_\Box$\\
 \hline
\multirow{2}{*}{\textbf{Confluence}} & \multirow{2}{*}{$\Diamond\Box p\leq \Box\Diamond p$} &\cellcolor[HTML]{EFEFEF} $R_\Diamondblack \circ R_\Diamond \subseteq R_\Diamond \circ R_\Diamondblack$ &\cellcolor[HTML]{EFEFEF} $R_\Diamondblack \, \compdia \, R_\Diamond \subseteq R_\Diamond \, \compdia\, R_\Diamondblack   $   &\cellcolor[HTML]{EFEFEF}  $ R_\Diamond \, ;_I R_\Diamondblack \subseteq R_\Diamondblack \, ;_I R_\Diamond $
\\ \cline{3-5}
& & $R_\blacksquare \circ R_\Box \subseteq R_\Box \circ R_\blacksquare$ & $R_\blacksquare \, \compbox \, R_\Box \subseteq R_\Box \, \compbox\, R_\blacksquare $ & $R_\Box \, ;_I R_\blacksquare \subseteq R_\blacksquare \, ;_I R_\Box $
\\ \hline
\multirow{4}{*}{\textbf{Denseness}}  & \multirow{2}{*}{$\Diamond p\leq \Diamond\Diamond p$} &\cellcolor[HTML]{EFEFEF} $R_{\Diamond}\subseteq R_{\Diamond}\circ R_{\Diamond}$  &\cellcolor[HTML]{EFEFEF} $R_{\Diamond} \subseteq R_{\Diamond}\, \compdia \, R_{\Diamond} $ &\cellcolor[HTML]{EFEFEF} $ R_{\Diamond} \, ;_I R_{\Diamond} \subseteq R_{\Diamond} $ \\ \cline{3-5} 
&&$R_{\blacksquare}\subseteq R_{\blacksquare}\circ R_{\blacksquare}$& $R_{\blacksquare} \subseteq R_{\blacksquare}\, \compbox \, R_{\blacksquare} $ & $ R_{\blacksquare} \, ;_I R_{\blacksquare}\subseteq R_{\blacksquare}$\\
\cline{2-5}
& \multirow{2}{*}{$ \Box \Box p\leq \Box p$} &\cellcolor[HTML]{EFEFEF} $R_{\Diamondblack}\subseteq R_{\Diamondblack}\circ R_{\Diamondblack}$  &\cellcolor[HTML]{EFEFEF} $  R_{\Diamondblack}  \subseteq R_{\Diamondblack}\, \compdia \, R_{\Diamondblack}$ &\cellcolor[HTML]{EFEFEF} $ R_{\Diamondblack} \, ;_I R_{\Diamondblack}  \subseteq R_{\Diamondblack} $ \\ \cline{3-5} 
&&$R_{\Box}\subseteq R_{\Box}\circ R_{\Box}$& $R_{\Box} \subseteq R_{\Box}\, \compbox \, R_{\Box}$ &$R_{\Box} \, ;_I R_{\Box} \subseteq R_\Box$ \\
\hline
\end{tabular}
\end{center}
\caption{\label{table 3}Well-known modal reduction principles and their correspondents as relational inequalities}
    \label{fig:MPS:Famous}
\end{table}

The mathematically grounded relationships just discussed reflect also on the `preservation' of the {\em intuitive meaning} of Sahlqvist mrps across these different semantics, as we argue in the  following examples.

\begin{example} \label{ex:reflexivity}
The Sahlqvist mrps $p \leq \Diamond p$ and $\Box p \leq p$ correspond to the $\mathsf{KRel}_{\mathcal{L}}$-inequality $\Delta \subseteq R$
on generalized approximation spaces. When interpreting the accessibility relation $R$ as epistemic indiscernibility (e.g.~that of an agent), the informal interpretation of   $\Delta \subseteq R$ is  that if the agent can tell two states apart, then these states are distinct, and since in generalized approximation spaces two states are inherently indistinguishable iff they are identical, then this condition is equivalent to the condition that, if the agent can tell two states apart, then these states must not be inherently indistinguishable, which is by and large what the $\mathsf{GRel}_{\mathcal{L}}$-inequalities $E \subseteq R_\Box$, and $D \subseteq R_\Diamond$ (i.e.~the correspondents of the same mrps on graph-based frames) express, except for a subtle refinement. Namely, the information encoded in $E$ about any state $z$ of a graph-based frame presents itself from two perspectives:  
the set $E^{[1]}[z]$ of the states which are discernible from $z$, and  the set $E^{[0]}[z]$ of the states from which $z$ is  discernible. Since $E$ is not symmetric, these two perspectives are not  equivalent in general, and become equivalent when $E$ is symmetric.   
%
In general, inequality $E \subseteq R_\Box$ encodes the condition that, for any two states $x$ and $y$, if $x$ is inherently indiscernible from $y$, then  $x$  is  indistinguishable from $y$ according to the agent. Inequality   $D \subseteq R_\Diamond$ says that if $x$ is inherently indiscernible from $y$, then $y$ is  indistinguishable from $x$ also according to the agent. 
\end{example}
\begin{example} \label{ex:symmetry}
The Sahlqvist mrps $p \leq \Box \Diamond p$ and 
$\Diamond \Box p \leq  p$ correspond to the $\mathsf{KRel}_{\mathcal{L}}$-inequalities  $R^{-1} \subseteq R$ and $R \subseteq R^{-1}$
on Kripke frames. Under the same interpretation of  the accessibility relation as in the example above, this condition encodes the symmetry of the indiscernibility relation. 
The same mrps correspond to the $\mathsf{GRel}_{\mathcal{L}}$-inequalities  $R_{\Diamondblack}\subseteq R_{\Diamond} $ and $R_\Box\subseteq R_\blacksquare$ 
 on graph-based frames, which express the symmetry of the indiscernibility relations. Hence, the meaning of these conditions across the two semantics is both formally and intuitively verbatim the same. 
\end{example}
\begin{example}\label{ex:transitivity}
The Sahlqvist mrps $\Diamond\Diamond p \leq \Diamond p$ and 
$\Box p \leq \Box\Box p$ correspond to the $\mathsf{KRel}_{\mathcal{L}}$-inequality  $R \circ R \subseteq R$
on generalized approximation spaces. When interpreting the accessibility relation $R$ as epistemic indiscernibility (e.g.~that of an agent), the informal interpretation of  
 $R \circ R \subseteq R$ is that for any two states $x$ and $y$, if the agent can distinguish $y$ from $x$, then the agent can distinguish $y$  from every element in $R^{-1}[x]$, which is the set of all states  which the agent cannot tell apart from  $x$. 

The same mrps correspond to the $\mathsf{GRel}_{\mathcal{L}}$-inequalities $R_\Diamond \compdia R_\Diamond \subseteq R_\Diamond$, and  $R_\Box \compbox R_\Box \subseteq R_\Box$ on graph-based frames. Unravelling $R_\Diamond\, \compdia R_\Diamond \subseteq R_\Diamond$, we get that  $R_\Diamond^{[0]}[x] \subseteq R_\Diamond^{[0]} [E^{[0]}[R_\Diamond ^{[0]}[x]]]$ for every $x$. That is, if the agent can distinguish $y$ from $x$, then the agent can distinguish $y$  from any element of $E^{[0]}[R_\Diamond^{[0]} [x]]$, which is the set of all states which can be {\em (inherently) distinguished} from  every state which the agent can distinguish from $x$, that is, the set of all the states that the informational entropy allows to distinguish from all the states which the agent can distinguish from $x$. Hence, $E^{[0]}[R_\Diamond^{[0]} [x]]$ is `as close as it gets', in a scenario characterized by informational entropy, to the set of states that the agent cannot distinguish from $x$ in a scenario with no informational entropy. Similar considerations, from the other perspective, apply to the meaning of $R_\Box \compbox R_\Box \subseteq R_\Box$.
\end{example}

The above examples show that transferring the modal axioms defining approximation spaces to the hyperconstructive (graph-based) setting preserves their intuitive meaning. In some cases,  axioms which have the same first-order correspondent  on approximation spaces correspond to non-equivalent first-order conditions on graph-based frames, which hints at the comparatively greater richness of the hyperconstructivist setting.

We can now refer to  classes of hyperconstructivist approximation spaces defined by Sahlqvist mrps  using the same name we use for  the classes of generalized approximation spaces of which they are the shiftings. 
For example, {\em reflexive} graph-based frames are defined as graph-based frames satisfying the reflexivity axioms $D \subseteq R_\Diamond$ and $E \subseteq R_\Box$, while the class of symmetric graph-based frames are defined as  graph-based frames satisfying the symmetry axioms $R_\Box \subseteq R_\blacksquare$ and $R_\Diamondblack \subseteq R_\Diamond$. As discussed in Section \ref{sec:Rough set theory on graph-based frames}, this motivates us to define different sub-classes of hyperconstructivist approximation spaces like reflexive, symmetric, transitive and Pawlak approximation spaces (cf.~ Definition \ref{def:graph based approximation spaces}) as analogues of these classes of frames in classical approximation spaces to the graph-based frames. The following propositions follows immediately from the general results discussed in the previous section. 
\begin{proposition}\label{Pawlak axiom fraph-based}
For any Pawlak's hyperconstructivist approximation space  $\mathbb{F} =(Z,E,R_\Diamond, R_\Box)$,  and any $a,b \in \mathbb{F}^+$, 

\begin{center}
\begin{tabular}{lll}
 1.$\Diamond( a \vee b) = \Diamond a \vee  \Diamond b $    \quad\quad  & 2.$\Box (a \wedge b)  = \Box a \wedge  \Box b $ \quad\quad &   3.$ a \leq \Box b \Rightarrow \Diamond a \leq b$ \\
   4.$ \Diamond a \leq b \Rightarrow a \leq \Box b$  \quad\quad  & 5.$\Box a \leq a$ \quad\quad  & 6.$a \leq \Diamond a$\\
    7.$a \leq \Box\Diamond a$ \quad\quad & 8.$ \Diamond \Box a \leq a$ \quad\quad  & 9.$\Diamond\Diamond a  \leq \Diamond a$\\
     10.$ \Box a \leq \Box\Box a $. 
     \end{tabular}   
\end{center}
\end{proposition}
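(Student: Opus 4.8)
The plan is to unpack the definition of a Pawlak hyperconstructivist approximation space (Definition \ref{def:graph based approximation spaces}) into its constituent $\mathsf{GRel}_{\mathcal{L}}$-inequalities and then read off the ten properties in three groups. Being Pawlak means being $E$-reflexive ($D \subseteq R_\Diamond$ and $E \subseteq R_\Box$), $E$-symmetric ($R_\blacksquare \subseteq R_\Box$ and $R_\Diamondblack \subseteq R_\Diamond$), and $E$-transitive ($R_\Diamond \compdia R_\Diamond \subseteq R_\Diamond$ and $R_\Box \compbox R_\Box \subseteq R_\Box$). Each of these inclusions is, by the correspondence results of Section \ref{sec:crisp} (specifically the $\mathsf{GRel}_{\mathcal{L}}$-correspondents recorded in Table \ref{table 3}), the first-order correspondent of a Sahlqvist mrp; and by Lemma \ref{lemma:for:completeness} the validity of a sequent in $\mathbb{F}$ coincides with the corresponding inequality holding throughout $\mathbb{F}^+$.

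First I would dispatch items 1 and 2: by Lemma \ref{lemma:existence adjoints} the operation $\langle R_\Diamond\rangle$ is completely join-preserving and $[R_\Box]$ is completely meet-preserving on $\mathbb{F}^+$, which gives $\Diamond(a\vee b)=\Diamond a\vee\Diamond b$ and $\Box(a\wedge b)=\Box a\wedge\Box b$ for all $a,b$; note these require no Pawlak hypothesis. Next, items 5--10 follow directly from the inclusions listed above by matching each with its row in Table \ref{table 3}: $E\subseteq R_\Box$ yields $\Box a\leq a$ (item 5), $D\subseteq R_\Diamond$ yields $a\leq\Diamond a$ (item 6), $R_\Diamondblack\subseteq R_\Diamond$ yields $a\leq\Box\Diamond a$ (item 7), $R_\blacksquare\subseteq R_\Box$ yields $\Diamond\Box a\leq a$ (item 8), $R_\Diamond\compdia R_\Diamond\subseteq R_\Diamond$ yields $\Diamond\Diamond a\leq\Diamond a$ (item 9), and $R_\Box\compbox R_\Box\subseteq R_\Box$ yields $\Box a\leq\Box\Box a$ (item 10). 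In each case the validity of the mrp in $\mathbb{F}$, hence its holding in $\mathbb{F}^+$, is exactly the content of the correspondence, so these items need no further computation.

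The remaining items 3 and 4 are the real work, since they assert the adjunction $\Diamond a\leq b \Leftrightarrow a\leq\Box b$, whereas the adjunctions supplied by Lemma \ref{lemma:existence adjoints} are $\langle R_\Diamond\rangle\dashv[R_\blacksquare]$ and $\langle R_\Diamondblack\rangle\dashv[R_\Box]$. The bridge is provided by $E$-symmetry. From $R_\blacksquare\subseteq R_\Box$ one derives $\Box c\leq\blacksquare c$ for every $c\in\mathbb{F}^+$ (a smaller relation yields a pointwise larger box, since $R^{[0]}[\cdot]$ is antitone in $R$), and dually, from $R_\Diamondblack\subseteq R_\Diamond$ one derives $\Diamondblack c\leq\Diamond c$ via the intension of the diamond concepts. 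Then item 4 follows by $\Diamondblack a\leq\Diamond a\leq b$ together with the adjunction $\langle R_\Diamondblack\rangle\dashv[R_\Box]$, while item 3 follows by $a\leq\Box b\leq\blacksquare b$ together with the adjunction $\langle R_\Diamond\rangle\dashv[R_\blacksquare]$.

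The main obstacle will be precisely the correct handling of items 3 and 4: one must track the direction of each inequality carefully when passing from the relational inclusions of $E$-symmetry to the operator inequalities $\Box c\leq\blacksquare c$ and $\Diamondblack c\leq\Diamond c$, and then invoke the two native adjoint pairs in the right order. Everything else reduces to invoking the already-established correspondence dictionary of Table \ref{table 3} together with Lemma \ref{lemma:for:completeness}.
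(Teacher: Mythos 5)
Your proposal is correct and follows essentially the same route as the paper, which gives no explicit proof but asserts that the proposition ``follows immediately from the general results discussed in the previous section'' --- i.e.\ exactly the combination of Lemma \ref{lemma:existence adjoints}, Lemma \ref{lemma:for:completeness}, and the correspondence dictionary of Table \ref{table 3} that you invoke. Your explicit derivation of items 3 and 4 (obtaining $\Box c\leq\blacksquare c$ and $\Diamondblack c\leq\Diamond c$ from $E$-symmetry by antitonicity of $R\mapsto R^{[0]}[\cdot]$, then composing with the native adjunctions $\langle R_\Diamondblack\rangle\dashv[R_\Box]$ and $\langle R_\Diamond\rangle\dashv[R_\blacksquare]$) correctly supplies the one step the paper leaves entirely implicit.
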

All the  conditions of the proposition above   are satisfied by the lower and upper approximation operators of Pawlak's approximation spaces.

  Different axiomatic classes of approximation spaces are used in rough set theory as models of approximate reasoning in different situations. 
  The systematic relationship established in the present paper among generalized, hyperconstructivist, and conceptual approximation spaces makes it possible  to capture a wider range of situations and reasoning modes while still taking advantage of the basic intuitions developed about the original setting. 

On the algebraic side, the present results pave the way to systematically extending rough set theory to algebras based on general (i.e.~not necessarily distributive) lattices.  Different classes of general lattices with approximation operators can be defined which as lattice-based counterparts of  classes of Boolean algebras with operators. Algebraic models of rough set theory based on various classes of lattices such as Boolean algebras, completely distributive lattices, ortholattices, De Morgan lattices (cf.~Section \ref{ssec:algebraic review}) can then be encompassed and studied uniformly  as subclasses of  lattice-based `rough algebras', which will then serve as  a suitable common ground for comparing and transferring insights and results concerning formal frameworks of rough set theory  defined on (or dual to) different classes of lattices.

\section{Conclusions}
\label{sec:conclusions}
The present paper provides a systematic theory to explain and generalize some observations about similarities in the shape of first-order correspondents of the common rough set theory axioms like reflexivity, symmetry and transitivity in Kripke and graph-based frames. The precise nature of these similarities is made explicit via the notion of shifting. This notion makes it possible to define the shifted counterparts of first-order conditions which are well known and widely used in modal logic and rough set theory; thus, we are now in a position to provide counterparts of rough theory axioms and different classes of approximation spaces in graph-based frames. 

\paragraph{Parametric correspondence.}  Together with \cite{conradie2022modal}, Theorems \ref{thm: sahlqvist shifting} and \ref{prop: sahlqvist lifting to polarities} suggest that the various correspondence theories for different logics and semantic contexts can be not only methodologically {\em unified} by the same algebraic and algorithmic mechanisms; they can also be {\em parametrically} related to each other in terms of their outputs. Furthermore, this parametric relation hinges on the natural connections between different semantic settings. In the present case, Kripke frames can be naturally embedded into graph-based frames, and graph-based frames into polarity-based frames. The present contribution can be understood as a preservation result of the first-order correspondents of inductive mrps under these natural embeddings, as shown in Figure \ref{fig:CampingTent}.

\begin{figure}[h]
\begin{center}
\begin{tikzpicture}
\node at (0,0) {\includegraphics[width=6cm]{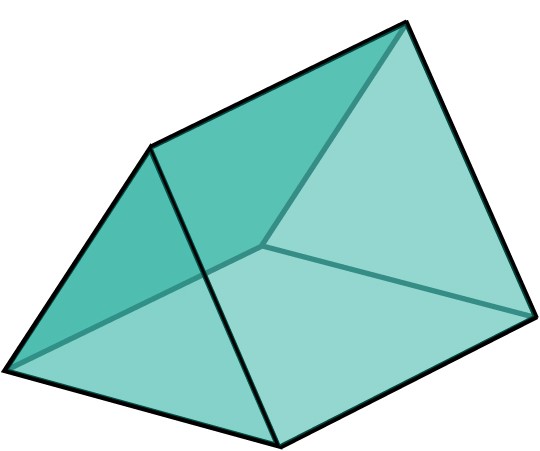}};
\draw (-3.5, 3.5) node {\textcolor{blue}{blue} = all Sahlqvist formulas};
\draw (-3, 3) node {\textcolor{red}{red} = all modal reduction principles};
\draw (3.5, -0.7) node {{MV-}};
\draw (3.5, -1) node {{Kripke}};
\draw (3.5, -1.3) node {{frames}};
\draw (1.7, -1.8) node  [color = blue, rotate = 27]  {{\bf verbatim the same}};
\draw (2.4, -1.9) node  [color = blue, rotate = 27]  {{\bf \cite{BritzMScThesis}}};

\draw (0, -2.8) node {{Kripke }};
\draw (0, -3.1) node {{frames}};
\draw (-0.2, -1.3) node [color = red, rotate = -65] {\bf lifted};
\draw (2.2, 1.2) node [color = red, rotate = -65] {\bf lifted};
\draw (2.3, 2.5) node {{MV-Polarity-based}};
\draw (2.3, 2.2) node {{frames}};
\draw (0, 1.7) node [color = red, rotate = 25] {\bf verbatim the same};
\draw (-2, 1.6) node {{{Polarity-}}};
\draw (-2, 1.3) node {{{based}}};
\draw (-2, 1) node {{{frames}}};
\draw (-2.1, 0) node [color = red, rotate = 57] {\bf lifted};
\draw (-1.5, -2.2) node [color = red, rotate = -15] {\bf shifted};
\draw (-3.5, -1.4) node  {{{Graph-}}};
\draw (-3.5, -1.7) node {{{based}}};
\draw (-3.5, -2) node {{{frames}}};

\draw (1.1, 0.8) node {{{MV-}}};
\draw (1.1, 0.5) node {{{Graph-}}};
\draw (0.8, 0.2) node {{{based}}};
\draw (0.6, -0.1) node {{{frames}}};
\end{tikzpicture}
\end{center}
\caption{A commutative diagram of semantic contexts.}
    \label{fig:CampingTent}
\end{figure}

\paragraph{Notation.} A crucial step in achieving these results is the ability to express the first-order correspondents of modal axioms in various semantic contexts in a more algebraic language, specifically in the languages of (heterogeneous) relation algebras. As we illustrated with the example of the modal transitivity axiom, the mathematical and meaning-based connections are near impossible to detect when the correspondents are expressed in the standard first-order frame correspondence languages of Kripke, polarity and graph based frames, respectively, yet become transparently clear when written in a suitable relational algebraic language as inclusions of relational compositions. The use of these languages to express correspondents is essential in the formulations and proofs of the general results given in Theorems \ref{thm: sahlqvist shifting} and \ref{prop: sahlqvist lifting to polarities}.


\paragraph{Future work.} The present work can be further pursued in several directions. Firstly, the perfect modularity of the graph-based setting allows to systematically study natural subclasses of hyperconstructivist approximation spaces. One example of such a class would be the hyperconstructivist counterparts of tolerance spaces \cite{skowron1996tolerance}, obtained by assuming the symmetry of the indiscernibility relation.
Secondly, similar shifting/lifting results may be sought for other logics, including polyadic modal logics and many-valued modal logics (i.e., a similar correspondence may be shown between many-valued Kripke frames and many-valued graph-based frames). For instance, a natural extension of the present work concerns investigating the remaining connections between the semantics presently investigated and their many-valued versions, as represented by the three un-annotated edges in Figure \ref{fig:CampingTent}.  Thirdly, the class of formulas covered by the shifting/lifting could possibly be expanded to more than inductive modal reduction principles. However, we do not expect this to be possible for all formulas with first-order frame correspondents, not even for all inductive formulas. Indeed, there exist inductive formulas whose corespondents over polarity-based frames are  {\em not} liftings of their correspondents over Kripke frames (see the concluding section of \cite{conradie2022modal}), and we expect similar examples to exist in the case of shiftings to graph-based frames. Fourthly, the literature contains instances of correspondence theory for modal logics on non-classical base obtained via G\"{o}del-McKinsey-Tarski (GMT) translation \cite{Godel:Interp,mckinsey1948some} to the multi-modal classical context (see e.g.~\cite{GeNaVe05} for an instance in the context of distributive modal logic and \cite{CoPaZh16a} for one in the context of bi-intuitionistic modal logic). In the context of graph-based frames, a similar approach suggests itself, whereby one would translate formulas over graph-based frames into formulas over Kripke frames, prefixing propositional variables with modalities interpreted with the relation $E$ to simulate the closure of valuations.   However, this approach involves several challenges, especially on the proof-theoretic front, not least of which is the fact that conditions such as the $E$-compatibility of the additional relations can {\em not} be represented by (analytic) inductive inequalities, and thus, the general methods for developing analytic display calculi for modal logics developed in \cite{GrMaPaTzZh15} would not be available, while they are straightforwardly applicable to the present setting. Future work will investigate such a GMT-translation based approach but, at present, it remains to be seen whether the parametric connection between correspondents on Kripke and graph-based frames can also be obtained in this way. Finally, parametric connections between the correspondents of modal formulas remain to be investigated between other semantics environments.

\subsubsection{Acknowledgements.} 
The research of Krishna Manoorkar is supported by the NWO grant KIVI.2019.001 awarded to Alessandra Palmigiano. The authors   confirm that there are no relevant financial or non-financial competing interests to report.


%
%
%
%

\bibliographystyle{splncs04}
\bibliography{mybib}
\end{document}